\theoremstyle{definition}
\newtheorem{df}{Definition}[section]
\newtheorem{ex}[df]{Example}
\theoremstyle{theorem}
\newtheorem{prop}[df]{Proposition}		
\newtheorem{thm}[df]{Theorem}
\newtheorem{lm}[df]{Lemma} 
\newtheorem{cor}[df]{Corollary} 
\newtheorem{fact}[df]{Fact} 
\newtheorem{question}{Question}
\theoremstyle{remark}
\newtheorem*{claim}{Claim}
\newtheorem{rk}[df]{Remark}
\title{Forking, Imaginaries and other features of $\ACFG$}
\author{Christian d'Elb\'ee}\thanks{Intitut Camille Jordan, Université Lyon 1.}
\date{\small\today}
\begin{document}

\newcommand\R{\mathbb{R}}		
\newcommand\C{\mathbb{C}}
\newcommand\Q{\mathbb{Q}}
\newcommand\SSS{\mathbb{S}^3}
\newcommand\N{\mathbb{N}}
\newcommand\U{\mathcal{U}}
\newcommand\F{\mathbb{F}}
\newcommand\B{\mathbb{B}}
\newcommand\T{\mathbb{T}}
\newcommand\K{\mathbb{K}}
\newcommand\A{\mathbb{A}}
\newcommand\M{\mathbb{M}}

\newcommand\xx{\bar x}
\newcommand\yy{\bar y}
\newcommand\zz{\bar z}

\newcommand\LL{\mathscr{L}}
\newcommand\UU{\mathscr{U}}
\newcommand\PP{\mathscr{P}}
\newcommand\ZZ{\mathscr{Z}}
\newcommand\MM{\mathscr{M}}
\newcommand\NN{\mathscr{N}}
\newcommand\KK{\mathscr{K}}
\newcommand\CC{\mathfrak{C}}
\newcommand\CCC{\mathcal{C}}
\newcommand\Aa{\mathfrak{A}}
\newcommand\GG{G_{\bar a}}
\newcommand\GGG{G_{\bar a '}}

\newcommand\Z{\mathbb{Z}}
\newcommand\Ha{\mathbb{H}}
\newcommand\MH{\mathcal{M}_\mathbb{H}}
\newcommand\Haz{\mathbb{H}_{\Z}}
\newcommand\HH{\widehat{\mathbb{H}_{\Z}}}
\newcommand\UN{\mathds{1}}

\newcommand\LR[1]{\mathbb{L}^{#1}(\mathbb{R})}		

\newcommand\Pcal{\mathcal{P}}
\newcommand\Cq{\mathcal{C}_{\Q}}
\newcommand\Rea{\mathscr{R}}

\newcommand{\cRM}[1]{\MakeUppercase{\romannumeral #1}}
\newcommand{\cRm}[1]{\textsc{\romannumeral #1}}	
\newcommand{\crm}[1]{\romannumeral #1}

\newcommand\LLr{\mathscr{L}_{\mathrm{ring}}}
\newcommand\LLf{\mathscr{L}_{\mathrm{field}}}
\newcommand\CCCC{\mathscr{C}}
\newcommand\oFp{\overline{\F_p}}
\newcommand\OO{\mathscr{O}}

\newcommand{\ol}[1]{\overline{{#1}}}
\newcommand{\set}[1]{\left\{ {#1} \right\}}
\newcommand{\vect}[1]{\langle {#1} \rangle}
\newcommand{\abs}[1]{\lvert {#1} \rvert}
\newcommand{\intinf}[1]{\lfloor {#1} \rfloor}
\newcommand{\intsup}[1]{\lceil {#1} \rceil}

\newcommand{\scal}[2]{\left\langle {#1} \middle| {#2} \right\rangle}	

\newcommand{\factor}[2]{{\left.\raisebox{.2em}{$#1$}\middle/\raisebox{-.2em}{$#2$}\right.}}

\def\Ind#1#2{#1\setbox0=\hbox{$#1x$}\kern\wd0\hbox to 0ex{\hss$#1\mid$\hss}
\lower.9\ht0\hbox to 0ex{\hss$#1\smile$\hss}\kern\wd0}
\def\Notind#1#2{#1\setbox0=\hbox{$#1x$}\kern\wd0\hbox to 0ex{\mathchardef\nn="0236\hss$#1\nn$\kern1.4\wd0\hss}\hbox to 0ex{\hss$#1\mid$\hss}\lower.9\ht0
\hbox to 0ex{\hss$#1\smile$\hss}\kern\wd0}

\def\ind{\mathop{\mathpalette\Ind{}}}

\def\nind{\mathop{\mathpalette\Notind{}}}

\def\indi#1{\mathop{\ \ \hbox to 0ex{\hss$\vert^{\hbox to 0ex{$\scriptstyle#1$\hss}}$\hss}
\lower1ex\hbox to 0ex{\hss$\smile$\hss}\ \ }}

\def\nindi#1{\mathop{\ \ \hbox to 0ex{\hss$\!\not{\vert}^{\hbox to 0ex{$\scriptstyle\,#1$\hss}}$\hss}
\lower1ex\hbox to 0ex{\hss$\smile$\hss}\ \ }}

\newcommand{\findep}[1][]{%
  \mathrel{
    \mathop{
      \vcenter{
        \hbox{\oalign{\noalign{\kern-.3ex}\hfil$\vert$\hfil\cr
              \noalign{\kern-.7ex}
              $\smile$\cr\noalign{\kern-.3ex}}}
      }
    }\displaylimits_{#1}
  }
}

\newcommand{\nfindep}[1][]{%
  \mathrel{
    \mathop{
      \vcenter{
	\hbox{\oalign{\noalign{\kern-.3ex}\hfil$\!\not{\vert}$\hfil\cr
              \noalign{\kern-.7ex}
              $\smile$\cr\noalign{\kern-.3ex}}}
      }
    }\displaylimits_{#1}
  }
}

\makeatletter
\newcommand{\setword}[2]{%
  \phantomsection
  #1\def\@currentlabel{\unexpanded{#1}}\label{#2}%
}
\makeatother

\newcommand{\NSOP}[1]{\mathrm{NSOP}_{#1}}  
\newcommand{\ACFG}{\mathrm{ACFG}}
\newcommand{\NFCP}{\mathrm{NFCP}}
\newcommand{\ACF}{\mathrm{ACF}}
\newcommand{\RCF}{\mathrm{RCF}}
\newcommand{\PAC}{\mathrm{PAC}}
\newcommand{\PACG}{\mathrm{PACG}}
\newcommand{\Psf}{\mathrm{Psf}}
\newcommand{\DCF}{\mathrm{DCF}}
\newcommand{\SCF}{\mathrm{SCF}}
\newcommand{\ACVF}{\mathrm{ACVF}}
\newcommand{\ACFA}{\mathrm{ACFA}}
\newcommand{\acl}{\mathrm{acl}}
\newcommand{\dcl}{\mathrm{dcl}}
\newcommand{\cl}{\mathrm{cl}}
\newcommand{\Sg}{\mathrm{Sg}}
\newcommand{\di}{\mathrm{dim}}
\newcommand{\expo}{\mathrm{exp}}
\newcommand{\dg}{\mathrm{deg}}

\newsavebox{\auteurbm}
\newenvironment{cit}[1]{\small\slshape
  \savebox{\auteurbm}{\upshape\sffamily#1}
\begin{flushright}}{\\[4pt]\usebox{\auteurbm}
\end{flushright}\normalsize\upshape}

\let\oldabstract\abstract
\let\oldendabstract\endabstract
\makeatletter
\renewenvironment{abstract}
{\renewenvironment{quotation}%
               {\list{}{\addtolength{\leftmargin}{8em} 
                        \listparindent 1.5em%
                        \itemindent    \listparindent%
                        \rightmargin   \leftmargin%
                        \parsep        \z@ \@plus\p@}%
                \item\relax}%
               {\endlist}%
\oldabstract}
{\oldendabstract}
\makeatother

\newcommand{\red}[1]{\textcolor{red}{#1}}

\newcommand{\blue}[1]{\textcolor{blue}{#1}}

\maketitle				

\begin{abstract}
We study the generic theory of algebraically closed fields of fixed positive characteristic with a predicate for an additive subgroup, called $\ACFG$. This theory was introduced in~\cite{dE18A} as a new example of $\NSOP{1}$ non simple theory. In this paper we describe more features of $\ACFG$, such as imaginaries. We also study various independence relations  in $\ACFG$, such as Kim-independence or forking independence, and describe interactions between them.
\end{abstract}

\hrulefill
\tableofcontents	
\hrulefill

\clearpage
\section*{Introduction}

The theory of algebraically closed fields of fixed positive characteristic with a predicate for an additive subgroup admits a model companion, $\ACFG$~\cite{dE18A}. Unlike other generic expansions of $\ACF_p$, such as $\ACFA$ or the expansion by a generic predicate~\cite{CP98}, $\ACFG$ is $\NSOP 1$ and not simple. The study of $\NSOP 1$ theories has been rekindled due to the recent success in developing a Kim-Pillay style characterization (Chernikov and Ramsey~\cite{CR16}) and a geometric theory based on the notions of Kim-forking and Kim-independence (Kaplan and Ramsey~\cite{KR17}). Various examples of strictly $\NSOP 1$ theories appear since then. Among them are
\textit{
\begin{enumerate}
\item Generic $\LL$-structure $T^{\emptyset}_\LL$~\cite{KrR18};
\item Generic $K_{n,m}$-free bipartite graphs~\cite{CKr17};
\item omega-free PAC fields~\cite{C02}.
\end{enumerate}}
$\ACFG$ shares many features with those three archetypical examples. Our example appears to be slightly more complicated than \textit{(1)} and \textit{(2)}, due to the lack of weak elimination of imaginaries and its more algebraic aspect, which makes it closer to \textit{(3)}. Throughout this paper, we will point out both the similarities and the differences between those four examples, in order to emphasize what might be typical of $\NSOP 1$ theories.

We intend to give a description of $\ACFG$ based on the study of various independence relations in models of $\ACFG$. In Section~\ref{sec_gen}, we give basic properties of $\ACFG$. A notion of \emph{weak independence} (following the denomination of~\cite{C02}) was already described in~\cite{dE18A}, and shown to coincide with Kim-independence over models. We prove here that it satisfies all the properties of the Kim-Pillay characterization of simple theories~\cite{KP97} except one: \emph{base monotonicity}. This phenomenon, not predicted by~\cite{KR17} is similar to the case of \textit{(2)}. We define \emph{strong independence} --a similar notion appears in \textit{(1)}, \textit{(2)} and \textit{(3)}-- and we show that it lacks only one property of the Kim-Pillay characterization of simple theories: \emph{local character}. We give some structural properties of models of $\ACFG$, and prove that there exists generic subgroups of $\overline{\F_p}$, even more, almost all (in the sense of Baire) subgroups of $\oFp$ are generic, see Subsection~\ref{sub_baire}. 

Section~\ref{sec_im} and Section~\ref{sec_fork} can be read independently.

Section~\ref{sec_im} is dedicated to the description of imaginaries in a given model $(K,G)$ of $\ACFG$. The weak independence has a "dual" definition in the expanded structure $(K,G,K/G)$ which turns out to be easier to grasp than its original definition in $(K,G)$ (see Subsection~\ref{subsec_dual}). We extend the weak independence in $(K,G,K/G)$ and this allows us to mimic the classical argument that appears for instance in~\cite{CP98},~\cite{BMP17}, and~\cite{KrR18} to prove that $(K,G,K/G)$ has weak elimination of imaginaries.

In Section~\ref{sec_fork}, we describe forking in $\ACFG$. The strong independence plays a key role to show that forking equals dividing (for types). We also advertise some nice phenomena that appear when one forces the \emph{base monotonicity} property on a given independence relation. It seems that a general method for proving that dividing equals forking for types is arising from different examples, see Subsection~\ref{sec_forkdiscuss} 

The diagram in Figure~\ref{fig_diag1} represents the interactions between the independence relations that appears in models of $\ACFG$ and links them with usual independences. All arrows are strict, from that point of view, $\ACFG$ differs from \textit{(1)}, \textit{(2)} and \textit{(3)}, see Subsection~\ref{sec_forkdiscuss}.\\

~
\vfill
~
\begin{figure}
  \begin{center}
    \includegraphics{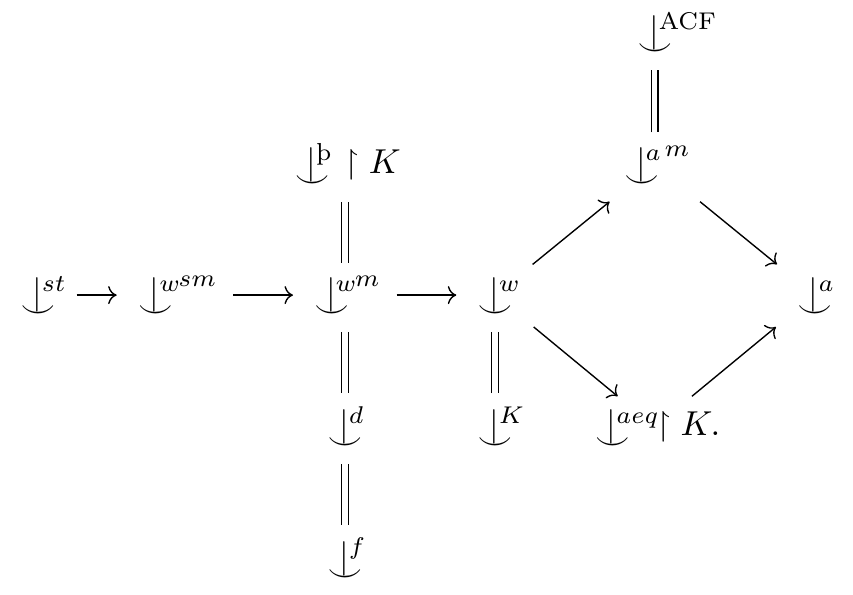}
  \end{center}
\caption{Interactions of independence relations in $\ACFG$.}\label{fig_diag1}
\end{figure}
~
\vfill
~

\clearpage

\noindent\textbf{Conventions and notations.} Capital letters $A,B,C$ stands for sets whereas small latin letters $a,b,c$ designate either singletons, finite or infinite tuples. 

We often identify tuples and sets when dealing with independent relations, for some tuple $a=a_1,\dots$ then $c\ind_C a$ has the same meaning as $c\ind_C \set{a_1,\dots}$.  Let $\ind, \indi 0$ be two ternary relations. We say that $\ind$ \emph{is stronger than} $\indi 0$ (or $\indi 0$ \emph{is weaker than} $\ind$) if for all $a,b,C$ we have $a\ind_C b \implies a \indi 0_C b$. We denote it by $\ind\rightarrow \indi 0$. For a prime $q$, $\F_{q^n}$ is the field with $q^n$ elements.

Throughout this paper numerous notions of independence will appear.
The following independence relations are defined in every theory.
 \begin{enumerate}
 \item $a\indi a _C b$ if and only if $acl(Ca)\cap acl(Cb) = acl(C)$
 \item $a\indi{aeq} _C b$ if and only if $acl^{eq}(Ca)\cap acl^{eq}(Cb) = acl^{eq}(C)$
  \item $a\indi{\text{\thorn}}_C b$ if and only if $tp(a/Cb)$ does not thorn-fork over $C$
  \item $a\indi{K}_C b$ if and only if $tp(a/Cb)$ does not Kim-fork over $C$
 \item $a\indi{d}_C b$ if and only if $tp(a/Cb)$ does not divide over $C$ 
 \item $a\indi{f}_C b$ if and only if $tp(a/Cb)$ does not fork over $C$
 \item $a\indi u _C b$ if and only if $tp(a/Cb)$ is finitely satisfiable in $C$
 \end{enumerate}
  Here is a list of properties for a ternary relation $\ind$ defined over small subsets of $\M$ a big model of some countable theory $T$ (in the last case the property is defined with respect to another ternary relation $\ind'$, also defined over small subsets of $\M$).
\begin{itemize}
  \item \setword{\bsc{Invariance}}{INV}. If $ABC\equiv A'B'C'$ then $A\ind_C B$ if and only if $A'\ind_{C'} B'$.
  \item \setword{\bsc{Finite Character}}{FIN}. If $a\ind_C B$ for all finite $a\subseteq A$, then $A\ind_C B$.
  \item \setword{\bsc{Symmetry}}{SYM}. If $A\ind_C B$ then $B\ind_C A$.
  \item \setword{\bsc{Closure}}{CLO} $A\ind_C B$ if and only if $A \ind_{\acl(C)} \acl(BC)$.
  \item \setword{\bsc{Monotonicity}}{MON}. If $A\ind_C BD$ then $A\ind_C B$.
  \item \setword{\bsc{Base Monotonicity}}{BMON}. If $A\ind_C BD$ then $A\ind_{CD} B$.
  \item \setword{\bsc{Transitivity}}{TRA}. If $A \ind_{CB} D$ and $B\ind_C D$ then $AB\ind_C D$.
  \item \setword{\bsc{Existence}}{EX}. For any $C$ and $A$ we have $A\ind_{C} C$.
  \item \setword{\bsc{Full Existence}}{EXT}. For all $A,B$ and $C$ there exists $A'\equiv_C A$ such that $A'\ind_C B$.
  \item \setword{\bsc{Extension}}{EXT2}. If $A\ind_C B$, then for all $D$ there exists $A'\equiv_{CB}A$ and $A'\ind_C BD$.
  \item \setword{\bsc{Local Character}}{LOC}. For all finite tuple $a$ and infinite $B$ there exists $B_0\subset B$ with $\abs{B_0}\leq \aleph_0$ and $a\ind_{B_0} B$.
  \item \setword{\bsc{Strong Finite Character}}{STRFINC} over $E$. If $a\nind_E b$, then there is a formula $\Lambda(x,b,e)\in tp(a/Eb)$ such that for all $a'$, if $a'\models \Lambda(x,b,e)$ then $a'\nind_E b$.
\item \setword{\bsc{Stationnarity}}{STAT} over $E$. If $c_1 \equiv_E c_2$ and $c_1\ind_E A$, $c_2\ind_E A$ then $c_1\equiv_{EA} c_2$.
\item \setword{\bsc{Witnessing}}{WIT}. Let $a,b$ be tuples, $\MM$ a model and assume that $a\nind_{\MM} b$. Then there exists a formula $\Lambda(x,b)\in tp(a/\MM b)$ such that for any global extension $q(x)$ of $tp(b/\MM)$ finitely satisfiable in $\MM$ and for any $(b_i)_{i<\omega}$ such that for all $i<\omega$ we have  $b_i\models q\upharpoonright \MM b_{<i}$, the set
 $\set{ \Lambda(x, b_i) \mid i<\omega}$ is inconsistent.
 \item $\ind '$-\setword{\bsc{amalgamation}}{AM} over $E$. If there exists tuples $c_1,c_2$ and sets $A,B$ such that
\begin{itemize}
\item $c_1\equiv_E c_2$
\item $A\ind'_E B$
\item $c_1\ind_E A$ and $c_2\ind_C B$
\end{itemize}
then there exists $c\ind_E A,B$ such that $c\equiv_A c_1$, $c \equiv_B c_2$, $A\indi a _{Ec} B$, $c\indi a _{EA} B$ and $c\indi a _{EB} A$.
\end{itemize}
If $A\ind _C B$, the set $C$ is called the \emph{base set}.
\clearpage

\section{Generalities on $\ACFG$}\label{sec_gen}

Let $p>0$ be a fixed prime number. Unless stated otherwise, every field we consider has characteristic $p$. Let $\LLr$ be the language of rings and $\LL_G = \LLr\cup\set{G}$ for $G$ a unary predicate. Let $\ACF_G$ be the $\LL_G$-theory whose models are algebraically closed fields of characteristic $p$ in which $G$ is a predicate for an additive subgroup. Let $\ACFG$ be the model companion of $\ACF_G$, see~\cite[Example 5.11]{dE18A}.

\subsection{Preliminaries, axioms and types}\label{sub_tp} 

The following is~\cite[Proposition 5.4]{dE18A}.
\begin{fact}[Axiomatisation of $\ACFG$]\label{prop_axACFG}
  The theory $\ACFG$ is axiomatised by adding to $\ACF_{G}$ the following $\LL_G$-sentences, for all tuples of variables $x'\subset x$, $y' \subset y$ and $\LLr$-formula $\phi(x,y)$
$$\forall y (\vect{y'}\cap G = \set{0} \wedge \theta_{\phi}(y)) \rightarrow (\exists x \phi(x,y)\wedge \vect{xy'}\cap G = \vect{x'}),$$
where $\theta_\phi(y)$ such that $K\models \theta_\phi(b)$ if and only if in an elementary extension of $K$, there exists a tuple of realisations of $\phi(x,b)$ which is $\F_p$-linearly independent over $K$ (see~\cite[Theorem 5.2]{dE18A}).
\end{fact}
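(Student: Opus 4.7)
The plan is to establish that the displayed scheme axiomatises the model companion of $\ACF_G$, by showing both that any model satisfying the scheme is existentially closed in $\ACF_G$ and that every model of $\ACF_G$ embeds into one satisfying the scheme. The preliminary algebraic input I would rely on is an amalgamation lemma for $\ACF_G$: given $(K,G)\subseteq (K_1,G_1)$ and $(K,G)\subseteq (K_2,G_2)$ with $K_1,K_2$ linearly disjoint over $K$ inside some ambient algebraically closed field, the compositum $K_1 K_2$ carries a canonical additive subgroup $G_1+G_2$ satisfying $(G_1+G_2)\cap K_i = G_i$, so that both sides embed as $\LL_G$-substructures. This reduces to an $\F_p$-linear disjointness calculation using that $G_1,G_2$ are $\F_p$-vector subspaces.

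For the existential closedness direction, I would first put an arbitrary quantifier-free $\LL_G$-formula in a normal form. Using $\ACF$-quantifier elimination on the $\LLr$-part, every existential formula over $(K,G)$ is equivalent to a disjunction of formulas of the shape: there exists $x$ satisfying an $\LLr$-formula $\phi(x,b)$ together with a prescription of which $\F_p$-linear combinations of $xb$ lie in $G$ and which do not. Choosing an $\F_p$-basis of $\vect{xy}\cap G$ modulo $\vect{y}\cap G$ rewrites this in the template of the axiom, with the sub-tuple $x'$ specifying a basis of the new $G$-elements and $y'$ specifying the coordinates of $b$ outside $G$. If such a formula is realised in some $\ACF_G$-extension of $(K,G)$, the realisation witnesses $\theta_\phi(b)$ in $K$ (essentially by definition of $\theta_\phi$, via~\cite[Theorem 5.2]{dE18A}), and the hypothesis $\vect{y'}\cap G=\set{0}$ is inherited from $K$; the axiom then yields a realisation inside $(K,G)$.

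For the converse, I would build an $\omega$-chain of $\ACF_G$-extensions: given $(K,G)\models \ACF_G$ and an instance $(\phi,x',y',b)$ satisfying the hypotheses of the axiom, I must produce an $\ACF_G$-extension $(K^*,G^*)$ in which the conclusion is witnessed. By $\theta_\phi(b)$ there is an $\F_p$-linearly independent tuple $a$ over $K$ realising $\phi(x,b)$ in some $\ACF_p$-extension $L$ of $K$. Setting $G^*:=G+\vect{a'}$ inside $L$ and invoking the amalgamation lemma, one checks that $G^*\cap K=G$ and that $\vect{ab'}\cap G^*=\vect{a'}$, so $(L,G^*)$ is an $\ACF_G$-extension of $(K,G)$ realising the required instance. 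Iterating this construction across all instances and taking the union of the resulting chain produces an $\ACF_G$-model extending $(K,G)$ and satisfying the scheme.

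The main obstacle in both directions is the careful control of $\F_p$-linear combinations in the amalgamation step: one has to ensure that adjoining $a'$ to $G$ does not accidentally force any further $\F_p$-linear combination of the entries of $ab$ into $G^*$, and symmetrically does not force new elements of $K$ into $G$. The precise role of $\theta_\phi$ is to produce, in the extension, a realisation of $\phi(x,b)$ in sufficiently general position --- namely, $\F_p$-linearly independent over $K$ --- so that this kind of collision can be avoided. Verifying this amounts to the same $\F_p$-linear disjointness check as in the amalgamation lemma, and makes visible why $\theta_\phi$ is both the sufficient and the necessary hypothesis in the axiom scheme.
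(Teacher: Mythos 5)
First, a point of reference: the paper does not prove this statement at all --- it is quoted as a Fact and attributed to \cite[Proposition 5.4]{dE18A} --- so there is no in-paper argument to compare against. Your overall strategy (the two directions of the model-companion characterisation, the amalgamation of $\LL_G$-extensions via $G_1+G_2$ over linearly disjoint field extensions, and the one-step extension $(L, G+\vect{a'})$ iterated along a chain) is the standard route and is, in outline, what the cited source does. The verification that $\vect{ab'}\cap(G+\vect{a'})=\vect{a'}$ and $(G+\vect{a'})\cap K=G$ from the $\F_p$-linear independence of $a$ over $K$ is correct as you describe it.

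There is, however, a genuine gap in the existential-closedness direction. You assert that a realisation of the normal-form formula in some $\ACF_G$-extension of $(K,G)$ ``witnesses $\theta_\phi(b)$ essentially by definition''. It does not: $\theta_\phi(b)$ demands a realisation of $\phi(x,b)$ that is $\F_p$-linearly independent \emph{over $K$}, whereas an arbitrary witness $a$ in an extension may have coordinates, or nontrivial $\F_p$-combinations of coordinates, lying in $K$; the existence of such an $a$ does not by itself yield a linearly independent one. The real content of this direction is the reduction that makes $\theta_\phi$ applicable: one must choose an $\F_p$-basis of the combinations of $a$ falling into $K$, absorb those values into the parameter tuple, and replace $\phi$ by the formula $\phi'$ obtained by this substitution, so that the remaining sub-tuple of $a$ witnesses $\theta_{\phi'}$ at the enlarged parameters. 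Your basis choice of $\vect{xy}\cap G$ modulo $\vect{y}\cap G$ handles the $G$-membership pattern but not this linear-independence-over-$K$ issue, which is a different quotient. Relatedly, atomic $\LL_G$-formulas assert $G(P(x,y))$ for arbitrary polynomial $\LLr$-terms $P$, not just $\F_p$-linear ones, so your normal form also silently requires introducing fresh existential variables for the values of these polynomials before the $G$-conditions become linear. Both repairs are routine, but they are exactly where the proof lives, so the sketch as written does not yet establish the direction.
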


By \cite[Proposition 1.15]{dE18A} we have the following, for $(K,G)\models\ACFG$ sufficiently saturated, and $a,b, C$ in $K$
\begin{enumerate}
    \item $\acl_{\ACFG}(C) = \acl_{\ACF}(C) =: \overline C$;
    \item $a\equiv_C b$ if and only if there exists an $\LL_G$-isomorphism $\sigma : \overline{Ca}\rightarrow \overline{Cb}$ over $C$ such that $\sigma(a) = b$;
    \item the completions of $\ACFG$ are given by the $\LL_G$-isomorphism type of $(\overline{\F_p}, G(\overline{\F_p}))$.
  \end{enumerate}

Let $x$ be a tuple from a field extension of $K$ and $H$ be an additive subgroup of the field $\overline{Cx}$. If
   $$\overline{Cx}\cap K = \overline{C}\mbox{ and }H\cap \overline{C} = G(\overline{C})$$
 then, by~\cite[Proposition 1.16]{dE18A}, the type associated to the $\LL_G$-isomorphism class of the pair $(\overline{Cx}, H)$ is consistent in $(K,G)$, i.e. there exists a tuple $a$ from $K$ such that there is a $\LL_G$-isomorphism over $C$ $$f : (\overline{Ca},G(\overline{Ca})) \rightarrow  (\overline{Cx},H)$$
   with $f(a) = x$. 



 \begin{ex}[Empty types]
   Let $(K,G)$ be a $\kappa$-saturated model of $\ACFG$, $C\subseteq K$ such that $\abs{C}<\kappa$ and $x$ a finite tuple algebraically independent over $K$. By previously, the type associated to the pair $(\overline{Cx}, G(\overline{C}))$ is consistent. Hence there is some tuple $a$ from $K$, algebraically independent over $C$ such that $G(\overline{Ca}) = G(\overline{C})$.
   This type is unique if $G(\overline{C})\subseteq C$: let $a$ and $a'$ realise this type, meaning that $G(\overline{Ca}) = G(\overline{Ca'}) = G(\overline{C})$. Then $a \equiv_C a'$. Indeed if $\sigma$ is a field isomorphism over $C$ between $\overline{Ca}$ and $\overline{Ca'}$, then it fixes $G(\overline{C})$ so it is an $\LL_G$-isomorphism. The type is unique in particular if $C$ is algebraically closed. This uniqueness is a special case of the stationarity of the strong independence see Proposition~\ref{lm_propst}.
 \end{ex}

\subsection{Independence relations in $(K,G)$}
We work in a monster model $(K,G)$ of $\ACFG$.

\begin{df}[Weak and strong independence]
Let $A,B,C$ be subsets of $K$. Let $\indi{\ACF}\quad$ be the forking independence in the sense of $\ACF$. Recall the \emph{weak independence relation}:
$$A\indi w _C B \text{ if and only if } A\indi{\ACF}_C \quad B\text{ and } G(\overline{AC}+\overline{BC}) = G(\overline{AC})+G(\overline{BC}),$$
and the \emph{strong independence relation}:
$$A\indi{st} _C B \text{ if and only if } A\indi{\ACF}_C \quad B\text{ and } G(\overline{ABC}) = G(\overline{AC})+G(\overline{BC}).$$
\end{df}

\begin{thm}\label{thm_indw}
  The relation $\indi w$ satisfies \ref{INV}, \ref{CLO}, \ref{SYM}, \ref{EXT}, \ref{MON}, \ref{EX}, \ref{LOC}, \ref{TRA}, \ref{STRFINC} over algebraically closed sets, $\indi a$-\ref{AM} over algebraically closed sets.
\end{thm}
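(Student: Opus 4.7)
The plan is to dispatch the properties in three tiers of increasing difficulty, leveraging throughout the identification $\acl_{\ACFG}=\acl_{\ACF}$ recalled in Section~\ref{sub_tp} and the consistency criterion of~\cite[Proposition 1.16]{dE18A} for realising prescribed $\LL_G$-isomorphism classes. The easy properties \ref{INV}, \ref{SYM}, \ref{EX}, and \ref{CLO} are immediate from the definition. For \ref{MON} and \ref{TRA}, the recurring trick is that $\ACF$-independence forces linear disjointness, hence intersection identities like $\overline{AC}\cap\overline{BDC}=\overline{C}$: given a witness $g=\alpha+\beta\in G$ with $\alpha\in\overline{AC}$ and $\beta\in\overline{BC}$, decompose $g$ under the larger hypothesis as $g=\alpha'+\beta'$ with $\alpha'\in G(\overline{AC})$, $\beta'\in G(\overline{BDC})$; then $\alpha-\alpha'=\beta'-\beta\in\overline{C}$, forcing $\beta'\in G(\overline{BC})$. \ref{TRA} iterates the same move, first across $CB$ (to pull the $\overline{DBC}$-summand into $\overline{BC}+\overline{DC}$) and then across $C$ using $B\indi w_C D$.

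For \ref{EXT}, first pick $A'\equiv_{\overline{C}}A$ with $A'\indi{\ACF}_{\overline{C}}B$, transporting the $\LL_G$-structure on $\overline{CA'}$ from that on $\overline{CA}$. Set $H:=G(\overline{CA'})+G(\overline{CB})$ on $\overline{CA'}+\overline{CB}$ and extend arbitrarily to a subgroup of $\overline{CA'B}$. Linear disjointness yields $H\cap\overline{CA'}=G(\overline{CA'})$ and $H\cap\overline{CB}=G(\overline{CB})$, so the pair is realisable by~\cite[Proposition 1.16]{dE18A}. For \ref{LOC}, build a countable $B_0=\bigcup_n B_n\subseteq B$ by a chain: at stage $n$, add a finite subset of $B$ handling any witness to $a\nindi w_{B_n} B$---either an $\ACF$-dependence (finitely many elements suffice by stability of $\ACF$) or a group witness $g=\alpha+\beta\in G\setminus(G(\overline{aB_n})+G(\overline{B}))$, for which one adds a finite subset of $B$ over which $\beta$ is algebraic. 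The finiteness of $a$ bounds the transcendence degree of $\overline{aB_\omega}/\overline{B_\omega}$, so countably many stages capture every obstruction.

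For \ref{STRFINC} over algebraically closed $E$: if $a\nindi{\ACF}_E b$ then a standard $\LLr$-formula witnesses this by stability of $\ACF$. Otherwise the group identity fails with a witness $g=\alpha+\beta$ as before, and the key observation is that $E$ being algebraically closed combined with $a\indi{\ACF}_E b$ forces $\overline{aE}\cap\overline{bE}=E$, so the decomposition is unique modulo $E$. The formula $\Lambda(x,b)$ then specifies the polynomial data pinning $\alpha$ over $xE$ and $\beta$ over $bE$, the conjunction $\alpha+\beta\in G$, and the non-existence of any $E$-correction $c$ in the finite, formula-definable candidate set with $\alpha+c\in G$ and $\beta-c\in G$; by uniqueness, every $a'\models\Lambda$ reproduces the obstruction.

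The $\indi a$-\ref{AM} clause over algebraically closed $E$ is the main obstacle. Let $\sigma:\overline{Ec_1}\to\overline{Ec_2}$ be the $\LL_G$-isomorphism witnessing $c_1\equiv_E c_2$. Form the free compositum of $\overline{EAc_1}$ and $\overline{EBc_2}$ over $\overline{Ec_1}$ (identified with $\overline{Ec_2}$ via $\sigma$), equipped with $H:=G(\overline{EAc_1})+G(\overline{EBc_2})$ on the sum and extended to a subgroup of the ambient algebraic closure. The weak independences $c_1\indi w_E A$ and $c_2\indi w_E B$ provide $\ACF$-independence within each half, while $A\indi a_E B$ and algebraic closedness of $E$ deliver the intersection identities forcing $H$ to have correct traces on $\overline{EAc_1}$, $\overline{EBc_2}$, and $E$; the configuration is then realised by~\cite[Proposition 1.16]{dE18A}, yielding $c$ with $c\equiv_A c_1$, $c\equiv_B c_2$, and $c\indi w_E AB$, and the supplementary $\indi a$-clauses follow from field-theoretic disjointness inside the amalgam. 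The critical, and hardest, step is verifying that no cross-sum in $H$ lies unexpectedly outside $G(E)$ on $\overline{EA}\cap\overline{EB}$---this is exactly where the $\indi a$-hypothesis on $A,B$ over $E$ cannot be dropped.
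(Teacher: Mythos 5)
Your treatment of \ref{INV}, \ref{CLO}, \ref{SYM}, \ref{EX}, \ref{MON} and \ref{TRA} is fine; in particular your \ref{TRA} argument is in substance the paper's (you trace a single element $g=\alpha+\delta$ through the two decompositions and use $\overline{ABC}\cap\overline{BCD}=\overline{BC}$, where the paper packages the same computation via modularity of the lattice of subgroups). Be aware, though, that the paper only proves \ref{TRA} and \ref{LOC} here and imports everything else from the earlier paper \cite{dE18A}; your sketches of \ref{EXT}, \ref{STRFINC} and $\indi a$-\ref{AM} are re-derivations, and the amalgamation one stops exactly where the real work (the trace computations on $\overline{EAx}$, $\overline{EBx}$ and $\overline{EAB}$) begins.

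The genuine gap is in \ref{LOC}, one of the two properties actually proved in the paper. Your chain construction never establishes that each stage adds only countably many elements of $B$. At stage $n$ the obstructions are the pairs $(\alpha,\beta)$ with $\alpha\in\overline{aB_n}$, $\beta\in\overline{B}$ and $\alpha+\beta\in G$; since $\overline{B}$ is arbitrary there may be uncountably many such witnesses, so ``add a finite subset of $B$ handling any witness'' does not keep $B_{n+1}$ countable, and the justification you offer --- that the transcendence degree of $\overline{aB_\omega}$ over $\overline{B_\omega}$ is finite --- is the wrong invariant: the group obstruction is not controlled by transcendence degree, and the problematic coordinate is $\beta$, which ranges over $\overline{B}$. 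The missing idea is the paper's Claim: for $A$ arbitrary and $B$ an additive subgroup there is $C\subseteq B$ with $\abs{C}\leq\abs{A}$ and $G(A+B)=G(A+C)+G(B)$. Concretely, for each $\alpha$ in the \emph{countable set} $\overline{aB_n}$ one chooses a \emph{single} $\beta_\alpha\in\overline{B}$ with $\alpha+\beta_\alpha\in G$ (when one exists); any other witness $\beta$ for the same $\alpha$ satisfies $\beta-\beta_\alpha\in G\cap\overline{B}=G(\overline{B})$, so it is automatically absorbed. Only with this one-representative-per-coset observation does the construction close off after countably many stages. A secondary problem of the same flavour occurs in your \ref{STRFINC} sketch: the set of corrections $c\in E$ with $\beta-c\in G$ is not finite --- it is either empty or a coset of $G(\overline E)$ --- so the formula you describe quantifies over an infinite, non-uniformly-definable set; one must instead fix one such $c_0$ as an explicit parameter and express the obstruction as $\alpha+c_0\notin G$, which is what the cited \cite[Lemma 3.6]{dE18A} does.
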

\begin{proof}
  Apart from \ref{TRA} and \ref{LOC}, all properties has been proven in~\cite[Theorem 4.2]{dE18A} and~\cite[Example 5.21]{dE18A}.
\\

\ref{TRA}. Assume that $A \indi w _{CB} D$ and $B\indi w_C D$. We may assume that $A = \overline{ABC},B=\overline{CB}$ and $D=\overline{CD}$. By \ref{MON}, it is sufficient to show that $A\indi w _C D$. We clearly have $A\indi{\ACF}_C\quad D$ by \ref{TRA} of $\indi{\ACF}\quad$. We show that $G(A+D) = G(A)+G(D)$. By $A\indi w _B D$ we have $G(A + \overline{BD}) = G(A)+G(\overline{BD})$. It follows that $G(A+D)$ is included in $(A+D)\cap (G(A)+G(\overline{BD}))$, which, by modularity, is equal to $$G(A)+ (A+D)\cap G(\overline{BD}) = G(A)+ G(A\cap \overline{BD} + D).$$
As $A\indi{\ACF}_B\quad D$, $A\cap \overline{BD} = B$. By $B\indi w _C D$, $G(B+D) = G(B)+G(D)$ hence $$G(A+D) = G(A)+G(B)+G(D) = G(A)+G(D).$$

\ref{LOC}.   We start with a claim.
  \begin{claim}Let $A,B$ be subsets of $(K,G)$ with $B$ subgroup of $(K,+)$, then there exists $C\subseteq B$ with $\abs{C}\leq \abs{A}$ such that $$G(A+B) = G(A+C)+G(B).$$
  \end{claim}
  \begin{proof}[Proof of the claim] For each $a\in A$ define $C(a)$ to be the set of those $b\in B$ such that $a+b\in G$. Take $c(a)\in C(a)$ for each $a$ such that $C(a)$ is nonempty, and set $$C = \set{c(a) \mid a\in A \text{ and }C(a)\neq \emptyset}.$$ Now if $g\in G(A+B)$ then $g=a+b$ with $a\in A$, $b\in B$. We have $C(a)$ nonempty so we can write for $c = c(a)$ $$g =(a+ c)+ (b-c).$$ It follows that $b-c \in G(B)$ hence $g\in G(A+C)+G(B)$. The reverse inclusion is trivial.\end{proof}

  Let $a$ be a finite tuple and $B$ an algebraically closed set. We construct two sequences $(A_i)_{i<\omega}$ and $(D_i)_{i<\omega}$ such that the following holds for all $n<\omega$:\\
  \begin{enumerate}
  \item $A_n\subseteq A_{n+1}\subseteq \overline{Ba}$ and $D_n\subseteq A_{n}$
  \item $G(A_n + B)\subseteq G(A_{n+1})+G(B)$
  \item $A_{n}\indi{\ACF}_{D_n}B$
  \item $\abs{A_n}\leq \aleph_0$
\end{enumerate}
  Using \ref{LOC} for $\indi{\ACF}\quad$ there exists a countable set $D_0\subseteq B$ such that $a \indi{\ACF}\quad_{D_0} B$. We define $A_0 = \overline{a D_0}$. Assume that $D_n$ and $A_n$ has been constructed and that $\abs{A_n}\leq \aleph_0$.
  By the claim there exists $C\subseteq B$ with $\abs{C}\leq \aleph_0$ such that $G(A_n+B) = G(A_n+C)+G(B)$. Using \ref{LOC}\footnote{Here we use a stronger version of \ref{LOC} which holds in any simple (countable) theory (see~\cite[Proposition 5.5]{C11}): for all countable set $A$ and arbitrary set $B$ there exists $B_0\subseteq B$ with $\abs{B_0}\leq \aleph_0$ with $A\ind_{B_0} B$.} of $\indi{\ACF}\quad$ on the set $A_nC$ there exists $D_{n+1}\subseteq B$ with $\abs{D_{n+1}}\leq \aleph_0$ such that $A_n C\indi{\ACF}_{D_{n+1}}\quad B$. We set $A_{n+1} = \overline{A_n C D_{n+1}}$. Note that $A_n+C \subseteq A_{n+1}$ so $G(A_n+B)\subseteq G(A_{n+1})+G(B)$.

Now set $A_\omega = \bigcup_{i<\omega} A_i$ and $D_{\omega} = \bigcup_{i<\omega} D_i$. We have $\abs{A_\omega}\leq \aleph_0$ and $\abs{D_\omega}\leq \aleph_0$. We claim that $$A_\omega \indi{w}_{D_\omega} B.$$
If $u$ is a finite tuple from $A_\omega$, then $u\subseteq A_{n}$ for some $n$, so as $A_{n}\indi{\ACF}_{D_n}\quad B$ we have $u\indi{\ACF}_{D_n}\quad B$. Now as $D_\omega \subseteq B$, we use \ref{BMON} of $\indi{\ACF}\quad$ to conclude that $u\indi{\ACF}_{D_\omega}\quad B$. As this holds for every finite tuple $u$ from $A_\omega$, we conclude that $$A_\omega \indi{\ACF}_{D_\omega}\quad B.$$
It remains to show that $G(A_\omega+B) = G(A_\omega)+G(B)$. If $g\in G(A_\omega+B)$ then there is some $n$ such that $g\in A_n+B$ and so
$$g\in G(A_n+B)\subseteq G(A_{n+1})+G(B)\subseteq G(A_\omega)+G(B).$$
The reverse inclusion being trivial, we conclude that $G(A_\omega + B)= G(A_\omega)+G(B)$, so $A_\omega \indi{w}_{D_\omega} B$.
As $a\subseteq A_\omega$ we conclude by \ref{MON} of $\indi w$.
\end{proof}

\begin{rk} The Kim-Pillay theorem states that if a relation $\ind$ satisfies \ref{INV}, \ref{SYM}, \ref{MON}, \ref{BMON}, \ref{TRA}, \ref{EXT}, \ref{LOC}, $\ind$-\ref{AM} over models and \ref{FIN}\footnote{This property is trivial for $\indi w$ and $\indi{st}$.}, then the theory is simple and this relation is forking independence. From Theorem~\ref{thm_indw} and~\cite[Proposition 5.20]{dE18A}, the weak independence $\indi w$ satisfies all the previous properties except \ref{BMON}. This is similar to the case of $K_{n,m}$-free bipartite graph~\cite[Remark 4.17]{CKr17}. 
\end{rk}

\begin{prop}\label{prop_kim}
  Assume that $C=\overline{C}$. If $a\indi w _C b$, then for all $C$-indiscernible sequence $(b_i)_{i<\omega}$ in $tp(b/C)$ such that $b_i\indi a _C (b_j)_{j<i}$ there exists $a'$ such that $a'b_i \equiv_C ab$ for all $i<\omega$.
In particular, the following are equivalent, for $C$ algebraically closed and $a\indi{\ACF}_C \quad b$.
\begin{enumerate}
\item $a\indi w _C b$;
\item for all $C$-indiscernible sequence $(b_i)_{i<\omega}$ in $tp(b/C)$ such that, $b_i\indi a _C (b_j)_{j<i}$ and $G(\overline{Cb_i}+\overline{Cb_k}) = G(\overline{Cb_i})+G(\overline{Cb_k})$ there exists $a'$ such that $a'b_i \equiv_C ab$ for all $i$;
\item for some $C$-indiscernible sequence $(b_i)_{i<\omega}$ in $tp(b/C)$ such that, $b_i\indi a _C (b_j)_{j<i}$ and $G(\overline{Cb_i}+\overline{Cb_k}) = G(\overline{Cb_i})+G(\overline{Cb_k})$ there exists $a'$ such that $a'b_i \equiv_C ab$ for all $i$.
\end{enumerate}
\end{prop}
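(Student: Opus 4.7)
The main statement is proved by an inductive construction plus compactness: build tuples $a_n$ for $n<\omega$ satisfying (a) $a_n b_i\equiv_C ab$ for all $i\leq n$ and (b) $a_n\indi{w}_C b_{\leq n}$. The conditions $\{xb_i\equiv_C ab : i<\omega\}$ then form a partial type that is finitely satisfiable (each finite portion being realised by some $a_n$), hence realised by the sought $a'$. For $n=0$, take a $C$-automorphism sending $b$ to $b_0$ and apply it to $a$; this preserves $\indi w$ by \ref{INV}. For the inductive step, fix $a_*$ with $a_* b_{n+1}\equiv_C ab$, so that $a_*\indi{w}_C b_{n+1}$ automatically by \ref{INV}. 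The hypothesis $b_{n+1}\indi a_C b_{\leq n}$ together with symmetry of $\indi a$ yields $b_{\leq n}\indi a_C b_{n+1}$, so I apply $\indi a$-\ref{AM} for $\indi w$ over the algebraically closed set $C$ (Theorem~\ref{thm_indw}) to the configuration $a_n\equiv_C a_*$, $a_n\indi{w}_C b_{\leq n}$, $a_*\indi{w}_C b_{n+1}$, producing $a_{n+1}$ agreeing with $a_n$ over $Cb_{\leq n}$, with $a_*$ over $Cb_{n+1}$, and satisfying $a_{n+1}\indi{w}_C b_{\leq n+1}$.

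For the ``in particular'' equivalences, (1)$\Rightarrow$(2) is immediate from the main statement, since any sequence satisfying the hypotheses of (2) also satisfies the weaker hypotheses of the main statement. For (2)$\Rightarrow$(3), the plan is to exhibit one indiscernible sequence meeting the joint $\indi a$-Morley and pairwise $G$-sum conditions: build a $\indi w$-Morley sequence over $C$ in $tp(b/C)$ using iterated \ref{EXT} of $\indi w$, then extract a $C$-indiscernible subsequence of order type $\omega$ via Erd\H os--Rado. Both the $\indi a$-condition (since $\indi w$ implies $\indi a$) and the pairwise $G$-condition (from pairwise $\indi w$) survive the extraction. Applying (2) to this witness gives the required $a'$ for (3).

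The main obstacle is (3)$\Rightarrow$(1). Suppose toward a contradiction that $a\nindi{w}_C b$ while the sequence $(b_i)$ and $a'$ from (3) exist. Since $a'b_0\equiv_C ab$, proving $a'\indi{w}_C b_0$ is equivalent to proving $a\indi{w}_C b$ by \ref{INV}; so I rename $(a',b_0)$ back to $(a,b)$ and assume $ab_i\equiv_C ab$ for all $i$, with $b=b_0$. Pick $g=u+v\in G(\overline{Ca}+\overline{Cb_0})\setminus (G(\overline{Ca})+G(\overline{Cb_0}))$ with $u\in \overline{Ca}$, $v\in \overline{Cb_0}$; necessarily $u\notin G(\overline{Ca})$ and $v\notin G(\overline{Cb_0})$. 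For each $i$, the $\LL_G$-isomorphism $\sigma_i\colon \overline{Cab_0}\to\overline{Cab_i}$ fixing $Ca$ and sending $b_0\mapsto b_i$ fixes $u$ and sends $v$ to some $v_i\in \overline{Cb_i}$ with $u+v_i\in G$. Then $v_0-v_1\in G\cap(\overline{Cb_0}+\overline{Cb_1})=G(\overline{Cb_0})+G(\overline{Cb_1})$ by the pairwise $G$-condition; writing $v_0-v_1=h_0+h_1$ with $h_\ell\in G(\overline{Cb_\ell})$, the element $w:=v_0-h_0=v_1+h_1$ lies in $\overline{Cb_0}\cap\overline{Cb_1}=C$ by the $\indi a$-condition on the sequence and $C=\overline C$. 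Since $u+v_0=(u+w)+h_0\in G$ and $h_0\in G$, one obtains $u+w\in G\cap\overline{Ca}=G(\overline{Ca})$, while $v_0-w=h_0\in G(\overline{Cb_0})$; hence $g=(u+w)+(v_0-w)\in G(\overline{Ca})+G(\overline{Cb_0})$, contradicting the choice of $g$.
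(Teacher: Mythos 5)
Your treatment of the main implication and of $(1)\Rightarrow(2)\Rightarrow(3)$ is correct and coincides with the paper's: the inductive amalgamation using $\indi a$-\ref{AM} of $\indi w$ over the algebraically closed base $C$, followed by compactness, is exactly the ``classical induction'' the paper alludes to, and producing a witnessing sequence for $(3)$ via \ref{EXT} of $\indi w$ together with extraction of indiscernibles is also the paper's route. Where you genuinely diverge is $(3)\Rightarrow(1)$: the paper simply quotes~\cite{dE18A} (Lemma 3.6 there), which supplies a formula $\Lambda(x,b,c)\in tp(a/Cb)$ whose instances along any pairwise $\indi a$-independent sequence in $tp(b/C)$ are inconsistent when $a\nindi w _C b$. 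You instead give a direct algebraic argument inside $G$. Your route is more self-contained and makes visible exactly which hypotheses on the sequence are used (the pairwise $G$-sum condition and $\overline{Cb_i}\cap\overline{Cb_j}=C$), which is a genuine plus.

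There is, however, a gap in that argument. You claim the $\LL_G$-isomorphism $\sigma_i\colon\overline{Cab_0}\to\overline{Cab_i}$ over $C$ with $\sigma_i(a,b_0)=(a,b_i)$ ``fixes $u$''. It fixes $Ca$ pointwise, hence fixes $\overline{Ca}$ only \emph{setwise}: $\sigma_i(u)=u_i$ is some $Ca$-conjugate of $u$, and if $u_0\neq u_1$ then $v_0-v_1=(g_0-g_1)-(u_0-u_1)$ need not lie in $G$, so the appeal to the pairwise $G$-condition breaks down at the first step. The repair is short: $u$ has only finitely many conjugates over $Ca$, so by pigeonhole there are indices $i<j$ with $u_i=u_j=:u_*$; running your computation with $u_*,v_i,v_j$ (using $\overline{Cb_i}\cap\overline{Cb_j}=C$ and the $G$-condition for this pair) yields $u_*+v_i\in G(\overline{Ca})+G(\overline{Cb_i})$, and applying $\sigma_i^{-1}$, which preserves $G$, maps $\overline{Ca}$ to itself and carries $\overline{Cb_i}$ to $\overline{Cb_0}$, gives $g\in G(\overline{Ca})+G(\overline{Cb_0})$, the desired contradiction. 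With this correction your proof is complete.
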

\begin{proof}
The first assertion holds because $\indi w$ satisfies $\indi a$-\ref{AM} over algebraically closed sets (Theorem~\ref{thm_indw}). The proof is a classical induction similar to the proof of Lemma~\ref{lm_fork} or~\cite[Proposition 4.11]{CKr17}. 

\textit{(1)} implies \textit{(2)} is a particular case of the first assertion. \textit{(2)} implies \textit{(3)} follows from the fact that such sequence exists, which follows from \ref{EXT} of $\indi w$. We show that \textit{(3)} implies \textit{(1)}. Assume that $a\nindi w _C b$ and let $\Lambda(x,b,c)$ be as in~\cite[Lemma 3.6]{dE18A}. If \textit{(3)} holds, then in particular $\set{\Lambda(x,b_i,c)\mid i<\omega}$ is consistent, for some $(b_i)_{i<\omega}$ such that $b_i\equiv_C b$ and $b_i\indi a_C b_j$. This contradicts~\cite[Lemma 3.6]{dE18A}.
\end{proof}

In particular, we have the following combinatorial characterization of $\indi w$ over algebraically closed sets.

\begin{cor}\label{cor_kim}
  The following are equivalent, for $C$ algebraically closed
  \begin{enumerate}
    \item $a\indi w _C b$;
    \item for all $C$-indiscernible sequence $(b_i)_{i<\omega}$ in $tp(b/C)$ such that, $b_i\indi w _C (b_j)_{j<i}$ there exists $a'$ such that $a'b_i \equiv_C ab$ for all $i$;
    \item for some $C$-indiscernible sequence $(b_i)_{i<\omega}$ in $tp(b/C)$ such that, $b_i\indi w _C (b_j)_{j<i}$ there exists $a'$ such that $a'b_i \equiv_C ab$ for all $i$.
  \end{enumerate}
\end{cor}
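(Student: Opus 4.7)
The plan is to reduce Corollary~\ref{cor_kim} to Proposition~\ref{prop_kim}, using the key observation that the hypothesis $b_i \indi w _C (b_j)_{j<i}$ is strictly stronger than the hypothesis appearing in Proposition~\ref{prop_kim}(2)(3). Indeed, \ref{MON} of $\indi w$ gives $b_i \indi w _C b_k$ for each $k < i$; since $\indi w \to \indi{\ACF} \to \indi a$, this yields $b_i \indi a _C (b_j)_{j<i}$, and by the very definition of $\indi w$ it also yields the subgroup condition $G(\overline{Cb_i} + \overline{Cb_k}) = G(\overline{Cb_i}) + G(\overline{Cb_k})$.

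For \textit{(1)} implies \textit{(2)}, I take an arbitrary $C$-indiscernible sequence $(b_i)$ as in Corollary~\ref{cor_kim}(2). By the observation above it satisfies the hypotheses of Proposition~\ref{prop_kim}(2), so the implication \textit{(1)} implies \textit{(2)} of Proposition~\ref{prop_kim} supplies the desired $a'$. For \textit{(3)} implies \textit{(1)}: a sequence witnessing Corollary~\ref{cor_kim}(3) witnesses Proposition~\ref{prop_kim}(3), and then \textit{(3)} implies \textit{(1)} of that proposition (which uses~\cite[Lemma~3.6]{dE18A}) gives $a\indi w _C b$.

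The one step requiring real work is \textit{(2)} implies \textit{(3)}, where I must exhibit a single $C$-indiscernible sequence $(b_i)$ in $\operatorname{tp}(b/C)$ with $b_i\indi w _C (b_j)_{j<i}$. First, using \ref{EXT} of $\indi w$ (Theorem~\ref{thm_indw}) I build inductively a long sequence $(b_i)_{i<\kappa}$ with $b_i\equiv_C b$ and $b_i \indi w _C (b_j)_{j<i}$ for every $i<\kappa$. Then by Ramsey and compactness (Erd\H{o}s--Rado with $\kappa$ large enough) I extract a $C$-indiscernible sequence $(b'_n)_{n<\omega}$ whose finite EM-pattern is realised by finite increasing subsequences of the original: for each $n$ there exist $i_1<\dots<i_{n+1}<\kappa$ with $b'_0\dots b'_n \equiv_C b_{i_1}\dots b_{i_{n+1}}$. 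Applying \ref{MON} of $\indi w$ in the original sequence gives $b_{i_{n+1}}\indi w _C b_{i_1}\dots b_{i_n}$, and \ref{INV} of $\indi w$ transfers this to $b'_n\indi w _C b'_0\dots b'_{n-1}$. This sequence then satisfies the hypothesis of Corollary~\ref{cor_kim}(2), whose conclusion is exactly (3).

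The main (minor) obstacle is thus the Morley-type extraction: one must be a bit careful to note that, although $\indi w$ is not known to have \ref{BMON}, the property $b_i \indi w _C b_{<i}$ only needs to be transported from finite subtuples of the long sequence to finite initial segments of the extracted sequence, which is exactly what matching EM-types together with \ref{MON} of $\indi w$ provides.
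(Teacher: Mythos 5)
Your overall route is the same as the paper's: reduce everything to Proposition~\ref{prop_kim}. Your key observation (that $b_i\indi w _C (b_j)_{j<i}$ implies both $b_i\indi a _C (b_j)_{j<i}$ and the subgroup condition) is correct, your (1)$\Rightarrow$(2) matches the paper, and for (2)$\Rightarrow$(3) — where the paper only says ``such a sequence exists by \ref{EXT}'' — your construction of a long $\indi w$-independent sequence followed by Erd\H{o}s--Rado extraction, transferring $b_i\indi w _C b_{<i}$ to the extracted sequence via \ref{MON} and \ref{INV} applied to finite subtuples, is exactly the standard justification and is fine.

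There is, however, a gap in (3)$\Rightarrow$(1). The three-way equivalence in Proposition~\ref{prop_kim} is stated \emph{under the standing hypothesis} $a\indi{\ACF}_C\quad b$, so you cannot invoke its implication (3)$\Rightarrow$(1) before securing that hypothesis — and securing it is precisely the extra content of the corollary, which, unlike the proposition, does not assume $a\indi{\ACF}_C\quad b$. The missing step is the one the paper supplies explicitly: a $C$-indiscernible sequence with $b_i\indi w _C (b_j)_{j<i}$ is in particular $\indi{\ACF}$-independent, hence a Morley sequence over $C$ in the sense of $\ACF_p$; the existence of $a'$ with $a'b_i\equiv_C ab$ for all $i$ gives $a'b_i\equiv^{\ACF}_C ab$, and since $\ACF_p$ is stable and $\indi{\ACF}\quad$ is its forking independence, consistency of the copies of $tp(a/Cb)$ along a Morley sequence yields $a\indi{\ACF}_C\quad b$. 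Once this sentence is added, your appeal to Proposition~\ref{prop_kim}(3)$\Rightarrow$(1) is legitimate and the proof is complete.
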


\begin{proof}
  \textit{(1)} implies \textit{(2)} follows from Proposition~\ref{prop_kim}, and \textit{(2)} implies \textit{(3)} holds since $\indi w$ satisfies \ref{EXT}. Assume that \textit{(3)} holds for some $a'$ and indiscernible sequence $(b_i)_{i<\omega}$ such that $b_i\indi w _C (b_j)_{j<i}$for all $i<\omega$. In particular, $(b_i)_{i<\omega}$ is a Morley sequence in the sense of $\ACF_p$, and $a'b_i \equiv^{\ACF}_C ab$ for all $i<\omega$. As $\indi{\ACF}\quad$ is forking independence in the sense of $\ACF_p$, we have $a\indi{ACF}_C \quad b$. By Proposition~\ref{prop_kim} we have $a\indi w _C b$.
\end{proof}

\begin{prop}\label{lm_propst}
  The relation $\indi{st}$ satisfies \ref{INV}, \ref{FIN}, \ref{SYM}, \ref{CLO}, \ref{MON}, \ref{BMON}, \ref{TRA}, \ref{EXT}, \ref{STAT} over algebraically closed sets.
\end{prop}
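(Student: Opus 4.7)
The plan is to verify each of the nine properties by splitting the definition of $\indi{st}$ into its $\ACF$-part ($A\indi{\ACF}_C B$) and its \emph{group-part} ($G(\overline{ABC})=G(\overline{AC})+G(\overline{BC})$). In every case the $\ACF$-part is preserved from the corresponding property of $\indi{\ACF}\quad$, so the content lies in the group-part. The only non-trivial algebraic input needed is the classical $\ACF$ fact that independence over an algebraically closed field $E$ of two fields $X,Y\supseteq E$ forces $X\cap Y=E$.

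The properties \ref{INV}, \ref{SYM}, \ref{CLO} and \ref{FIN} are book-keeping. For \ref{FIN}, any $g\in G(\overline{ABC})$ lies in $\overline{a_0 BC}$ for some finite $a_0\subseteq A$, and by hypothesis $g\in G(\overline{a_0 C})+G(\overline{BC})\subseteq G(\overline{AC})+G(\overline{BC})$. For \ref{MON}, if $A\indi{st}_C BD$ and $g\in G(\overline{ABC})$, write $g=u+v$ with $u\in G(\overline{AC})$ and $v\in G(\overline{BDC})$; then $v=g-u\in\overline{ABC}\cap\overline{BDC}=\overline{BC}$ by $\ACF$-independence, whence $v\in G(\overline{BC})$. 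For \ref{BMON}, from $G(\overline{ABCD})=G(\overline{AC})+G(\overline{BCD})$ one deduces $G(\overline{ACD})\subseteq G(\overline{AC})+G(\overline{BCD})$, so $G(\overline{ACD})+G(\overline{BCD})\subseteq G(\overline{ABCD})$ and equality follows. For \ref{TRA}, substitute $G(\overline{BCD})=G(\overline{BC})+G(\overline{CD})$ into $G(\overline{ABCD})=G(\overline{ABC})+G(\overline{BCD})$ and use $G(\overline{BC})\subseteq G(\overline{ABC})$.

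For \ref{EXT}, first pick $A_0\in K$ with $A_0\equiv_C A$ and $A_0\indi{\ACF}_C B$ (using \ref{EXT} of $\indi{\ACF}\quad$), and take a ``fresh'' copy $x$ of $A_0$ over $BC$ in some field extension of $K$, chosen so that $\overline{BCx}\cap K=\overline{BC}$. Transferring via the $BC$-field-isomorphism $\overline{BCx}\to\overline{BCA_0}$, define on $\overline{BCx}$ the additive subgroup $H$ corresponding to $G(\overline{A_0 C})+G(\overline{BC})$. The crux is the verification that $H\cap\overline{BC}=G(\overline{BC})$: if $g_1+g_2$ lies in this intersection with $g_1\in G(\overline{A_0 C})$ and $g_2\in G(\overline{BC})$, then $g_1\in \overline{A_0 C}\cap\overline{BC}=\overline{C}$ by $\ACF$-independence of $A_0$ from $B$ over $C$, hence $g_1\in G(\overline{C})\subseteq G(\overline{BC})$. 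Applying the type-consistency criterion of Subsection~\ref{sub_tp} with base $BC$ produces a realisation $A'$ in $K$ with $A'\equiv_C A$ and $G(\overline{A'BC})=G(\overline{A'C})+G(\overline{BC})$, whence $A'\indi{st}_C B$.

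For \ref{STAT}, let $C=\overline{C}$, $c_1\equiv_C c_2$ and $c_1,c_2\indi{st}_C A$. By the description of types in Subsection~\ref{sub_tp} there is an $\LL_G$-isomorphism $\sigma:\overline{Cc_1}\to\overline{Cc_2}$ over $C$ with $\sigma(c_1)=c_2$. The $\ACF$-independence of each $c_i$ from $A$ over $C$ together with stationarity of types in $\ACF$ over algebraically closed sets allows us to extend $\sigma$ to a field isomorphism $\tilde\sigma:\overline{Cc_1 A}\to\overline{Cc_2 A}$ fixing $\overline{CA}$. Using strong independence, $G(\overline{Cc_i A})=G(\overline{Cc_i})+G(\overline{CA})$ for $i=1,2$, so $\tilde\sigma$ maps $G(\overline{Cc_1 A})$ onto $G(\overline{Cc_2 A})$ and is therefore an $\LL_G$-isomorphism over $CA$, giving $c_1\equiv_{CA} c_2$. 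The main subtlety in the whole proof is \ref{EXT}: one has to verify that the naturally defined subgroup $H$ does not acquire extra elements inside $\overline{BC}$, which is exactly where the $\ACF$-intersection property coming from $A_0\indi{\ACF}_C B$ intervenes.
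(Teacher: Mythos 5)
Your proof is correct. Note, however, that the paper itself gives no argument here: its entire proof is the citation ``This is [dE18A, Lemma 3.10]'', so your write-up supplies a self-contained verification where the paper defers to the earlier article. The techniques you use are exactly the ones deployed elsewhere in this paper for analogous statements (the type-consistency criterion of Subsection~\ref{sub_tp} to realise a ``freely amalgamated'' group, as in Lemmas~\ref{lm_fonda}, \ref{lm_min} and~\ref{lm_max}, and the $\ACF$ fact that nonforking independence over an algebraically closed field forces the intersection of the algebraic closures to be the base), so your route is almost certainly the intended one. The purely formal items (\ref{FIN}, \ref{MON}, \ref{BMON}, \ref{TRA}, and the modularity computations behind them) all check out, as does \ref{STAT}: linear disjointness over the algebraically closed base is what lets the $\LL_G$-isomorphism $\sigma$ and the identity on $\overline{CA}$ cohere into $\tilde\sigma$, and the strong-independence identity then transports the group.

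One small expository gap in \ref{EXT}: the consistency criterion only hands you an $\LL_G$-isomorphism $(\overline{BCA'},G(\overline{BCA'}))\to(\overline{BCx},H)$ over $BC$; to conclude both $A'\equiv_C A$ and $G(\overline{A'BC})=G(\overline{A'C})+G(\overline{BC})$ you must additionally check that $H\cap\overline{Cx}$ is exactly the transfer of $G(\overline{A_0C})$, i.e.\ that $\bigl(G(\overline{A_0C})+G(\overline{BC})\bigr)\cap\overline{A_0C}=G(\overline{A_0C})$. This is the mirror image of the computation you did carry out for $H\cap\overline{BC}$ (write an element of the intersection as $g_1+g_2$ and use $\overline{A_0C}\cap\overline{BC}=\overline{C}$ to push $g_2$ into $G(\overline{C})$), so it is an omission of symmetry rather than of substance, but it should be stated.
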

\begin{proof}
  This is~\cite[Lemma 3.10]{dE18A}.
\end{proof}

\begin{rk}
  The property $\ind$-\ref{AM} over models follows from \ref{STAT} over algebraically closed sets, hence from Proposition~\ref{lm_propst}, the strong independence $\indi{st}$ satisfies every property of the Kim-Pillay characterization except \ref{LOC} otherwise, $\ACFG$ would be simple. Example~\ref{ex_monloc} shows directly that \ref{LOC} is not satisfied by $\indi{st}$, nor by any relation stronger than $\indi w$ which satisfies \ref{BMON}. Finally, by Proposition~\ref{lm_propst} and~\cite[Remark 4.7]{dE18A}, $\ACFG$ is mock stable in the sense of Adler~\cite{Ad08}.
\end{rk}

\subsection{Some structural features of $(K,G)$}\label{sec_structfeat} 

Let $P(X)$ be a polynomial in variables $X = X_1,\dots,X_n$ with coefficients in $K$. We say that $P$ is \emph{$\F_p$-flat over $K$} if whenever $u$ is a zero of $P$ in some field extension of $K$, there exists a non trivial $\F_p$-linear combination of $u$ that falls in $K$.

\begin{lm}\label{lm_SAG}
Let $(K,G)$ be an $\aleph_0$-saturated model of $\ACFG$, and $P(X_1,\dots,X_n)$ a polynomial non-$\F_p$-flat over $K$. Then for every $I\subset \set{1,\dots,n}$ there exists a zero $a$ of $P$ in $K$ such that $a_i\in G \iff i\in I$.
\end{lm}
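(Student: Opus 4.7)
The plan is to apply the axiom schema of Fact~\ref{prop_axACFG} to an $\LLr$-formula that not only cuts out the zero locus of $P$ but also forces $\F_p$-linear independence of the tuple of coordinates. Concretely, I would take
$$\phi(x_1,\dots,x_n) \;\equiv\; P(x_1,\dots,x_n) = 0 \;\wedge\; \bigwedge_{\lambda \in \F_p^n \setminus \{0\}}\; \sum_{i=1}^n \lambda_i x_i \neq 0,$$
a finite conjunction of $\LLr$-formulas asserting that $x$ is a zero of $P$ with $\F_p$-linearly independent coordinates (as elements of $K$).

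The non-$\F_p$-flatness hypothesis is precisely what yields a realisation of $\phi$ in a field extension of $K$ whose coordinates are $\F_p$-linearly independent over $K$, so $K \models \theta_\phi$. Invoking the axiom of Fact~\ref{prop_axACFG} with empty $y$ (so $y'$ is empty and $\vect{y'} \cap G = \{0\}$ is trivial) and $x' = (x_i)_{i \in I}$, one obtains $a \in K^n$ with $\phi(a)$ and $\vect{a} \cap G = \vect{a_i : i \in I}$.

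This $a$ is then the desired zero of $P$. For $i \in I$ one has $a_i \in \vect{a_i : i \in I} \subseteq G$. Conversely, if $a_j \in G$ for some $j$, then $a_j \in \vect{a}\cap G = \vect{a_i : i \in I}$, so $a_j = \sum_{i \in I}\lambda_i a_i$ for some $\lambda_i \in \F_p$; as the coefficient of $a_j$ in the relation $a_j - \sum_{i \in I}\lambda_i a_i = 0$ is $1$, this would be a non-trivial $\F_p$-linear dependence among $a_1,\dots,a_n$ unless $j \in I$, contradicting the linear independence guaranteed by $\phi(a)$.

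The main design choice is the $\phi$: with the naive $\phi \equiv (P(x)=0)$, the axiom only furnishes $\vect{a}\cap G = \vect{a_i : i \in I}$, which is compatible with an accidental coincidence $a_j \in \vect{a_i : i \in I} \subseteq G$ for some $j \notin I$; absorbing the $\F_p$-linear-independence clause into $\phi$ itself removes that ambiguity and delivers exactly the prescribed $G$-pattern on the coordinates.
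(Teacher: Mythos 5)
Your proof is correct. The only point worth double-checking is the passage from non-$\F_p$-flatness to $K\models\theta_\phi$: non-flatness gives a zero $t$ of $P$ in \emph{some} field extension with no nontrivial $\F_p$-combination in $K$, while $\theta_\phi$ is phrased via elementary extensions; since $K\models\ACF$, the algebraically closed field $\overline{K(t)}$ embeds over $K$ into an elementary extension, so this is fine (and note that such a $t$ automatically satisfies your extra clauses of $\phi$, because $0\in K$). Your route differs from the paper's in mechanism though not in substance. The paper argues semantically: it forms the $\LL_G$-extension $(\overline{K(t)},\,G+\vect{t_i\mid i\in I})$, checks directly that $t_j$ lies in the new group iff $j\in I$ and that the new group meets $K$ in $G$, and then quotes existential closedness of $(K,G)$ in this extension. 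You instead invoke the syntactic axiom schema of Fact~\ref{prop_axACFG} with empty $y$ and $x'=(x_i)_{i\in I}$, which only delivers $\vect{a}\cap G=\vect{a_i\mid i\in I}$; your observation that one must enrich $\phi$ with the finite conjunction forcing $\F_p$-linear independence of the coordinates, so as to exclude an accidental membership $a_j\in\vect{a_i\mid i\in I}$ for $j\notin I$, is exactly the right fix and is needed for your version to go through. The paper's version avoids this issue because the pattern of membership is verified by hand inside the explicit extension; your version has the mild advantage of using only the stated axiomatisation as a black box.
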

\begin{proof}
Let $I\subset \set{1,\dots,n}$. As $P$ is non-$\F_p$-flat, there exists a zero $t$ of $P$ in an extension of $K$ such that no non nontrivial $\F_p$-combination of $t$ falls in $K$. It follows that $(\overline{K(t)}, G+\vect{t_i \mid i\in I})$ is an $\LL_G$-extension of $(K,G)$. Indeed $(G+\vect{t_i \mid i\in I})\cap K = G$. Furthermore $t_j\in (G+\vect{t_i \mid i\in I})$ if and only if $j\in I$. As $(K,G)$ is existentially closed in $(\overline{K(t)}, G+\vect{t_i \mid i\in I})$, we have that $$(K,G)\models \exists x (P(x)=0 \wedge \bigwedge_{i\in I} x_i\in G \wedge \bigwedge_{j\notin I} x_j\notin G).$$
\end{proof}

\begin{lm}\label{lm_SAGV}
  A polynomial $P$ in $K[X]$ is $\F_p$-flat over $K$ if and only if all its irreducible factors in $K[X]$ are of the form $c(\lambda_1X_1+\dots+\lambda_n X_n -b)$ for some $\lambda_1,\dots,\lambda_n$ in $\F_p\setminus\set{0}$ and $b,c\in K$.
\end{lm}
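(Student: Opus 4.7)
The reverse direction is immediate: if every irreducible factor of $P$ has the stated form, then any zero $u$ of $P$ in a field extension of $K$ annihilates some factor $c(\lambda_1 X_1 + \dots + \lambda_n X_n - b)$ with all $\lambda_i \in \F_p \setminus \set{0}$, yielding the nontrivial $\F_p$-linear combination $\sum_i \lambda_i u_i = b \in K$ which witnesses $\F_p$-flatness.

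For the forward implication, I would argue factor by factor. Let $Q$ be an irreducible factor of $P$ in $K[X_1, \dots, X_n]$; since any zero of $Q$ is a zero of $P$, $Q$ inherits $\F_p$-flatness. The plan is to apply the hypothesis at the \emph{generic} zero of $Q$: as $Q$ is irreducible, $A := K[X_1, \dots, X_n]/(Q)$ is an integral domain, and in its fraction field $L$ the tuple $\xi = (\xi_1, \dots, \xi_n)$ of images of the $X_i$ is a zero of $Q$ with the crucial property that any $R \in K[X]$ vanishing at $\xi$ lies in the ideal $(Q)$. Applying the $\F_p$-flatness of $Q$ to $\xi$ yields some $(\lambda_1, \dots, \lambda_n) \in \F_p^n \setminus \set{0}$ and $b \in K$ with $\sum_i \lambda_i \xi_i = b$ in $L$; equivalently, the polynomial $R(X) := \lambda_1 X_1 + \dots + \lambda_n X_n - b$ of total degree $1$ is divisible by $Q$, forcing $\deg Q = 1$ and $Q = c \cdot R$ for some $c \in K^\times$.

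The main subtlety is ensuring that \emph{every} coordinate $\lambda_i$ is nonzero, rather than just the tuple $\lambda$: the generic-zero argument alone only produces a nonzero $\lambda \in \F_p^n$. If some $\lambda_i = 0$, then $Q$ does not involve $X_i$, and the natural move is to restrict attention to the variables on which $Q$ genuinely depends and re-run the flatness argument there, after which the remaining coordinates may be taken nonzero. This bookkeeping is what I would expect to be the main (though routine) obstacle; the essential algebraic content, that an irreducible $\F_p$-flat polynomial is linear, is entirely captured by the generic-zero construction above.
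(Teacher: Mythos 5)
Your reverse direction and the core of your forward direction are correct, and the forward direction is in essence the paper's argument: the paper realizes your generic zero by specializing $X_2,\dots,X_n$ to algebraically independent transcendentals $t_2,\dots,t_n$ and taking a root of $P(X_1,t_2,\dots,t_n)$, which for an irreducible factor $Q$ involving $X_1$ is exactly the generic point of $K[X]/(Q)$ that you construct. Your formulation via the fraction field of $K[X]/(Q)$ is if anything cleaner, since it treats each irreducible factor uniformly, whereas the specialization in one distinguished variable only sees the factors involving that variable.

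The genuine gap is precisely the step you wave off as routine bookkeeping. Your generic-zero argument yields $Q=c(\lambda_1X_1+\dots+\lambda_nX_n-b)$ with $\lambda\in\F_p^n\setminus\set{0}$, and your proposed repair --- restrict to the variables $Q$ depends on and re-run the argument --- cannot upgrade this to ``all $\lambda_i\neq0$'': if $Q$ genuinely does not involve $X_i$, no re-run over a smaller variable set will manufacture a nonzero coefficient of $X_i$. What actually has to be shown is that a flat $P$ admits \emph{no} irreducible factor omitting a variable; the paper devotes a separate (and itself rather delicate) argument to this, producing a zero of $P$ whose $i$-th coordinate is transcendental and claiming this contradicts flatness. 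This point is where all the difficulty of the lemma sits, and it is not innocuous: any zero $u$ of a linear factor $\sum_{j\in J}\lambda_jX_j-b$ with $\lambda_j\in\F_p$ and $J\neq\emptyset$ automatically carries the nontrivial combination $\sum_{j\in J}\lambda_ju_j=b\in K$, so for instance $P=X_1-b\in K[X_1,X_2]$ is $\F_p$-flat in the sense of the definition while its irreducible factor omits $X_2$. So the ``all $\lambda_i\neq0$'' clause cannot be extracted from flatness by any bookkeeping; one must either weaken the conclusion to $\lambda\in\F_p^n\setminus\set{0}$ (which suffices for the applications in the paper, where the polynomials considered are irreducible and involve all their variables) or add a hypothesis forcing each factor to involve every variable. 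Your write-up, by deferring exactly this, stops short of a proof of the statement as given.
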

\begin{proof}
  Assume that $P$ is $\F_p$-flat over $K$. If $\abs{X} = 1$, then $P$ satisfies the conclusion. Assume that $\abs{X}>1$. Let $t_2,\dots,t_n$ be algebraically independent over $K$, and consider $P(X_1,t_2,\dots,t_n)$. This polynomial has zeros in $\overline{K(t_2,\dots,t_n)}$ hence by $\F_p$-flatness each root $u$ satisfies $\lambda_1u + \lambda_2t_2+\dots+\lambda_n t_n =b$ for some non-zero tuple $\lambda_1,\dots,\lambda_n$ from $\F_p$ and $b\in K$. By hypothesis on $t_2,\dots,t_n$ we have that $\lambda_1\neq 0$.
It follows that $X_1-\lambda_1^{-1}(\lambda_2t_2+\dots+\lambda_n t_n -b)$ divides $P(X_1,t_2,\dots,t_n)$ hence $\lambda_1X_1+\dots+\lambda_n X_n -b$ divides $P$, as $K[X_1,t_2,\dots,t_n]\cong K[X]$. If $\lambda_i = 0$ for some $i$, then the tuple $(0,\dots,t,\dots,0)$ with $t$ transcendental over $K$ at the $i$-th coordinate, is a zero of $P$ that contradicts the $\F_p$-flatness. It follows that $P$ is of the desired form. The other direction is trivial.
\end{proof}

\begin{ex}[$\F_p$-flatness might depends on $p$]
  Consider the polynomial $P = X^2+Y^2$, with $b\in K$. Then $P$ is $\F_p$-flat over any algebraically closed field if and only if $-1$ is a square in $\F_p$. From ~\cite[Exercice 1.9.24]{F01}, when $p>2$ this is equivalent to $p\in 4\Z+1$. Using Lemmas~\ref{lm_SAG} and~\ref{lm_SAGV} it follows that whenever $(K,G)\models\ACFG$, $p>2$,
  \begin{itemize}
    \item if $p\notin 4\Z+1$ there exists $g\in G$ and $u\in K\setminus G$ such that $g^2+u^2 = 0$;
    \item if $p\in 4\Z+1$ such couple $(u,g)$ does not exists in $(K,G)$, as every couple of solution to $X^2+Y^2 = 0$ are $\F_p$-linearly dependent.
  \end{itemize}
\end{ex}

For two sets $A$ and $B$ in a field, we denote by $A\cdot B$ the product set $\set{ab\mid a\in A,\ b\in B}$.
\begin{prop}\label{prop_propmod}
  Let $(K,G)$ be a model of $\ACFG$. The following holds:
\begin{enumerate}
\item $K = G\cdot G = G\cdot(K\setminus G) = (K\setminus G)\cdot (K\setminus G)$;
\item $G$ is stably embedded in $K$;
\item For $a\notin \F_p$ and $P\in K[X]\setminus (K+\F_p\cdot X)$, we have $K = G+aG = (K\setminus G)+aG = G+P(G)$.
\end{enumerate}
\end{prop}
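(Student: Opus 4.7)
My plan is to derive all three assertions from Lemmas~\ref{lm_SAG} and~\ref{lm_SAGV}: parts (1) and (3) by direct application to suitable polynomials, and part (2) as a quick consequence of (1) via a substitution of parameters.

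For (1), I would fix $c \in K^\times$ and apply both lemmas to $P(X_1, X_2) = X_1 X_2 - c$. This $P$ is irreducible in $K[X_1, X_2]$ (a coefficient comparison rules out any factorisation into two linear factors when $c \neq 0$), and its unique irreducible factor fails the form $c'(\lambda_1 X_1 + \lambda_2 X_2 - b)$ since the total degree is two. By Lemma~\ref{lm_SAGV}, $P$ is not $\F_p$-flat; Lemma~\ref{lm_SAG}, applied in an $\aleph_0$-saturated elementary extension and transferred back by elementarity (the conclusion is first-order in $c$), then yields for each $I \subseteq \{1,2\}$ a zero $(x, y) \in K^2$ with $x \in G \iff 1 \in I$ and $y \in G \iff 2 \in I$. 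The three choices $I = \{1,2\}, \{1\}, \emptyset$ give the three equalities for $c \neq 0$; the case $c = 0$ is immediate (modulo the harmless observation that $(K \setminus G) \cdot (K \setminus G)$ excludes $0$).

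For (3), the same recipe applies with two different polynomials. To obtain $K = G + aG$ and $K = (K \setminus G) + aG$ when $a \notin \F_p$, use $Q(X, Y) = X + aY - c$: it is linear hence irreducible, and for its unique factor to be of the form $c'(\lambda_1 X + \lambda_2 Y - b)$ with $\lambda_1, \lambda_2 \in \F_p \setminus \{0\}$ one would need $a = \lambda_2/\lambda_1 \in \F_p$, contradicting $a \notin \F_p$. To obtain $K = G + P(G)$ with $P \notin K + \F_p \cdot X$, use $R(X, Y) = X + P(Y) - c$; it is irreducible because monic and linear in $X$ over $K[Y]$, and matching $R = c'(\lambda_1 X + \lambda_2 Y - b)$ would force $P(Y) \in K + \F_p \cdot Y$, again contradicting the hypothesis. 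In both cases Lemma~\ref{lm_SAGV} gives non-$\F_p$-flatness and Lemma~\ref{lm_SAG} supplies zeros of $Q$ and $R$ with the desired $G$-patterns.

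For (2), I would use (1) directly. Since every element of $K$ is a product of two elements of $G$, for any tuple $a = (a_1, \ldots, a_m) \in K^m$ one can write $a_i = g_{i,1} g_{i,2}$ with $g_{i,j} \in G$. For any $\LL_G$-formula $\phi(x, y_1, \ldots, y_m)$, defining $\psi(x, z_{1,1}, z_{1,2}, \ldots, z_{m,1}, z_{m,2}) := \phi(x, z_{1,1} z_{1,2}, \ldots, z_{m,1} z_{m,2})$ gives $\phi(G^n, a) = \psi(G^n, g_{1,1}, g_{1,2}, \ldots, g_{m,1}, g_{m,2})$, which is $G$-definable. Hence every $K$-definable subset of $G^n$ is $G$-definable, i.e.\ $G$ is stably embedded. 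The main conceptual point is this clean deduction of (2) from (1); the only technical step requiring care is the coefficient matching in (3) needed to translate the hypotheses $a \notin \F_p$ and $P \notin K + \F_p \cdot X$ into non-$\F_p$-flatness via Lemma~\ref{lm_SAGV}.
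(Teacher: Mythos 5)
Your proposal is correct and follows essentially the same route as the paper: non-$\F_p$-flatness of $X_1X_2-c$, $X+aY-c$ and $X+P(Y)-c$ via Lemma~\ref{lm_SAGV}, existence of zeros with prescribed $G$-pattern via Lemma~\ref{lm_SAG}, and part (2) by rewriting parameters as products of elements of $G$. Your extra care about transferring Lemma~\ref{lm_SAG} from an $\aleph_0$-saturated elementary extension and about the element $0$ in $(K\setminus G)\cdot(K\setminus G)$ is warranted but does not change the argument.
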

\begin{proof}
\textit{(1)} For all $b\in K$ the polynomial $XY-b$ is not $\F_p$-flat by Lemma~\ref{lm_SAGV}, so we conclude using~Lemma~\ref{lm_SAG}.\\
\textit{(2)} From \textit{(1)}, every element in $K$ is product of two elements in $G$, so any $\LL_G$-formula $\phi(x,a_1,\dots,a_n)$ is equivalent to $\phi(x,g_1h_1,\dots,g_nh_n)$ with $g_i,h_i\in G$.\\
\textit{(3)} For all $P\in K[X]\setminus (K+\F_p\cdot X)$, $b\in K$, the polynomial $Y+P(X)-b$ is not $\F_p$-flat, similarly to \textit{(1)}.
\end{proof}

\begin{prop}\label{prop_endodef}
Let $\zeta_1,\dots,\zeta_n$ be $\LLr$-definable endomorphisms of $(K,+)$, $\F_p$-linearly independent. Then
$$K/(\zeta_1^{-1}(G)\cap\dots\cap\zeta_n^{-1}(G)) \cong K/\zeta_1^{-1}(G) \times \dots\times K/\zeta_n^{-1}(G).$$
\end{prop}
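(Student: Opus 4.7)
The plan is to exhibit the natural candidate for the isomorphism and to establish surjectivity via the existential closedness of $(K,G)$ in $\LL_G$-extensions which are models of $\ACF_G$. Consider the map
\[
\phi \colon K \longrightarrow \prod_{i=1}^n K/\zeta_i^{-1}(G), \qquad x \longmapsto (x + \zeta_i^{-1}(G))_{i=1,\dots,n}.
\]
Its kernel is visibly $\bigcap_i \zeta_i^{-1}(G)$, so by the first isomorphism theorem it suffices to show that $\phi$ is surjective. Given $a_1,\dots,a_n \in K$, setting $b_i = \zeta_i(a_i)$, surjectivity amounts to finding $x \in K$ such that $\zeta_i(x) - b_i \in G$ for every $i$. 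Since this is an existential $\LL_G$-statement with parameters in $K$, by existential closedness it is enough to produce such an $x$ in some $\LL_G$-extension of $(K,G)$ that is a model of $\ACF_G$.

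To build such an extension I would take $t$ transcendental over $K$, set $K' = \overline{K(t)}$, $y_i = \zeta_i(t) - b_i$ for each $i$, and define $G' = G + \F_p y_1 + \cdots + \F_p y_n \subseteq K'$. By construction $t$ witnesses the desired formula in $(K', G')$, and $(K', G')$ is certainly an $\LL_G$-structure modelling $\ACF_G$. The delicate point is to verify that $G' \cap K = G$, so that $(K',G')$ is a genuine $\LL_G$-extension of $(K,G)$. If $g + \sum_i \lambda_i y_i \in K$ for some $g \in G$ and $\lambda_i \in \F_p$, then $\sum_i \lambda_i \zeta_i(t) \in K$. Using the standard fact that every $\LLr$-definable additive endomorphism of the additive group of an algebraically closed field of characteristic $p$ is given by an additive ($p$-)polynomial, $\sum_i \lambda_i \zeta_i$ corresponds to such a polynomial, which by the $\F_p$-linear independence of the $\zeta_i$ is nonzero as soon as the $\lambda_i$ are not all zero. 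A nonzero additive polynomial evaluated at a transcendental element lies in $K[t]$ with positive degree, hence outside $K$. This forces all $\lambda_i = 0$, and therefore $g + \sum \lambda_i y_i = g \in G$, as required.

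The main obstacle is the clean translation of the $\F_p$-linear independence of the endomorphisms $\zeta_i$ into $\F_p$-linear independence of their polynomial expressions, which relies on the classical identification of $\LLr$-definable additive endomorphisms in $\ACF_p$ with additive polynomials (via quantifier elimination and the classification of algebraic subgroups of $\mathbb{G}_a^2$). Once this identification is granted, the construction of $(K', G')$ and the appeal to existential closedness of $(K,G) \models \ACFG$ deliver the desired realisation of the existential formula in $(K,G)$, which by the first paragraph gives surjectivity of $\phi$ and hence the stated isomorphism.
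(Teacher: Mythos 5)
Your proof is correct and follows essentially the same route as the paper: reduce to surjectivity via the first isomorphism theorem, adjoin a transcendental $t$, extend $G$ by $\vect{\zeta_i(t)-b_i \mid i\leq n}$, verify that the new group meets $K$ only in $G$ using the $\F_p$-linear independence of the $\zeta_i$, and conclude by existential closedness. The only cosmetic difference is that the paper is a bit more careful about the fact that a definable additive endomorphism of an algebraically closed field may involve negative powers of Frobenius, so it only becomes an additive polynomial after composing with a suitable power of Frobenius; this does not affect the argument.
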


\begin{proof}
Using the first isomorphism theorem, it is sufficient to prove that the function $\zeta:K\rightarrow K/\zeta_1^{-1}(G) \times \dots\times K/\zeta_n^{-1}(G)$ defined by $\zeta(u) = (u+\zeta_1^{-1}(G),\dots, u+\zeta_n^{-1}(G))$ is onto. Let $c_1,\dots,c_n\in K$, we want to show that there exists $c\in K$ such that for all $i$
$\zeta_i(c-c_i)\in G$. Let $t$ be a transcendental element over $K$, by model completeness of $\ACF_p$, $\zeta_1,\dots,\zeta_n$ are $\F_p$-linearly independent definable endomorphisms of $(\overline{Kt},+)$. Consider the $\LL_G$-structure $$(\overline{Kt}, G+\vect{\zeta_i(t-c_i)\mid i\leq n}).$$
We have $(G+\vect{\zeta_i(t-c_i)\mid i\leq n})\cap K = G+\vect{\zeta_i(t-c_i)\mid i\leq n}\cap K$. For $\lambda_1,\dots,\lambda_n\in \F_p$, if $\sum_i \lambda_i \zeta_i(t-c_i)\in K$ then $\sum_i \lambda_i \zeta_i(t)\in K$. It is standard that a definable additive endomorphism is of the form $x\mapsto a_1\mathrm{Frob}^{n_1}(x)+\cdots+a_k\mathrm{Frob}^{n_k}(x)$ with $n_i\in \Z$ (see \cite[Chapter 4, Corollary 1.5]{B98} and \cite[Lemma A, VII, 20.3]{Hum98}) hence there is some $k$ such that $t\mapsto \left(\sum_i \lambda_i \zeta_i(t)\right)^{p^k}$ is polynomial. As $t$ is transcendental over $K$, $\left(\sum_i \lambda_i \zeta_i\right)^{p^k} = 0$, so $\sum_i \lambda_i \zeta_i = 0$. As $\zeta_1,\dots,\zeta_n$ are $\F_p$-linearly independent, $\lambda_1=\dots=\lambda_n = 0$. It follows that $(G+\vect{\zeta_i(t-c_i)\mid i\leq n})\cap K = G$, so $(\overline{Kt}, G+\vect{\zeta_i(t-c_i)\mid i\leq n})$ extends $(K,G)$. As $(K,G)$ is existentially closed in $(\overline{Kt}, G+\vect{\zeta_i(t-c_i)\mid i\leq n})$ we have that $(K,G)\models \exists x \bigwedge_i \zeta_i(x-c_i)\in G$, hence $\zeta$ is onto.
\end{proof}

If $\zeta_1,\dots,\zeta_n$ are $\F_p$-linearly independent $\LLr$-definable isomorphisms of $(K,+)$, the previous result can be used to find canonical parameters for the quotient $K/(\zeta_1^{-1}(G)\cap\dots\cap\zeta_n^{-1}(G))$ provided one have canonical parameters for the quotient $K/G$, see Example~\ref{ex_elim}.

\subsection{Models of $\ACFG$ in $\overline{\F_p}$}\label{sub_baire}

From~\cite[Theorem 5.2]{dE18A}, for any quantifier free $\LLr$-formula $\phi(x,y)$, there exists an $\LLr$-formula $\theta_\phi(y)$ such that for $K\models \ACF_p$ sufficiently saturated and $b$ tuple in $K$ such that $K\models \theta_\phi(b)$ if and only if there exists a realisation $a$ of $\phi(x,b)$ which is $\F_p$-linearly independent over $\ol{\F_p(b)}$. By quantifier elimination in $\ACF_p$, the formula $\theta_\phi$ can be choosen quantifier-free.

\begin{lm}\label{lm_Fp}
  If $\F_{p^n}\models \theta_\phi(b)$ then for all $m\geq n$ there exists $k> m$ such that
  $$\F_{p^k}\models \exists x\phi(x,b)\wedge\text{$x$ is $\F_p$-linearly independent over $\F_{p^m}$}.$$
\end{lm}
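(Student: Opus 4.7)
The strategy is to lift the hypothesis to $\overline{\F_p}$ by quantifier-freeness of $\theta_\phi$, apply the defining property of $\theta_\phi$ there to produce a witness in an elementary extension, descend that witness back into $\overline{\F_p}$ by expressing linear independence over a finite base as a first-order formula, and finally realise everything in a suitable $\F_{p^k}$.

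Since $\theta_\phi$ can be chosen quantifier-free in $\LLr$ and $\F_{p^n}\subseteq \overline{\F_p}$, the hypothesis $\F_{p^n}\models \theta_\phi(b)$ transfers immediately to $\overline{\F_p}\models \theta_\phi(b)$. Applying the defining property of $\theta_\phi$ to $\overline{\F_p}\models \ACF_p$, I would take an elementary extension $K^*\succ \overline{\F_p}$ and a tuple $a^*\in K^*$ realising $\phi(x,b)$ that is $\F_p$-linearly independent over $\overline{\F_p}$---hence a fortiori over the finite subfield $\F_{p^m}$, for any fixed $m\geq n$. The key technical observation is that, since $\F_{p^m}$ is \emph{finite}, the property ``$x$ is $\F_p$-linearly independent over $\F_{p^m}$'' is captured by a single quantifier-free $\LLr$-formula $\lambda(x,\bar d)$ with parameters $\bar d$ enumerating $\F_{p^m}$, namely the conjunction, over non-zero $(\mu_1,\dots,\mu_{\abs{x}})\in \F_p^{\abs{x}}$, of $\bigwedge_{c\in \F_{p^m}} \sum_i \mu_i x_i\neq c$. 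Thus $K^*\models \exists x(\phi(x,b)\wedge \lambda(x,\bar d))$, and $\overline{\F_p}\prec K^*$ brings the same existential statement down to $\overline{\F_p}$.

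To finish, I would pick a witness $a\in \overline{\F_p}^{\abs{x}}$ and any $k_0$ with $a\in \F_{p^{k_0}}^{\abs{x}}$. The formula $\lambda(a,\bar d)$, applied to the trivial non-zero combinations isolating each entry, forces $a_j\notin \F_{p^m}$ for all $j$, whence $\F_{p^{k_0}}\not\subseteq \F_{p^m}$, i.e.\ $k_0\nmid m$. Setting $k := \mathrm{lcm}(m,k_0)$ then gives $k>m$ together with $m\mid k$ and $k_0\mid k$, so both $\bar d$ and $a$ lie in $\F_{p^k}$; absoluteness of the quantifier-free formula $\phi(x,b)\wedge\lambda(x,\bar d)$ between $\F_{p^k}$ and $\overline{\F_p}$ will then yield the conclusion. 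The only subtle step---and the sole place where finiteness of $\F_{p^m}$ really enters---is the reduction of $\F_p$-linear independence to a first-order formula; once that is available, the rest is a routine elementarity and absoluteness argument.
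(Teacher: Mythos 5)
Your proof is correct and follows essentially the same route as the paper: transfer $\theta_\phi(b)$ to $\overline{\F_p}$ by quantifier-freeness, obtain a witness of $\phi(x,b)$ linearly independent over $\overline{\F_p}$ in an elementary extension, express $\F_p$-linear independence over $\F_{p^m}$ by a quantifier-free formula, descend to $\overline{\F_p}$ by elementarity and then to a finite subfield by absoluteness of quantifier-free formulas. The only (cosmetic) differences are that the paper encodes membership in $\F_{p^m}$ parameter-freely as being a root of $Z^{p^m}-Z$ rather than by enumerating $\F_{p^m}$ as parameters, and that your final $k=\mathrm{lcm}(m,k_0)$ should also absorb $n$ (e.g.\ take $\mathrm{lcm}(m,k_0,n)$) so that $b\in\F_{p^k}$.
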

\begin{proof}
  Assume that $\F_{p^n}\models \theta_\phi(b)$. Then as $\theta_\phi$ is quantifier free, $\ol{\F_p}\models \theta_\phi(b)$. It follows that for some elementary extension $K$ of $\ol{\F_p}$, there is some realisation $a$ of $\phi(x,b)$ which is $\F_p$-linearly independent over $\ol{\F_p}$. In particular for every non trivial polynomial $P(Z,Y)\in \F_p[Z,Y]$ (where $Z$ is a single variable and $Y$ a tuple of variables with $\abs{Y} = \abs{y}$), no nontrivial $\F_p$-linear combination of $a$ is a root of $P(Z,b)$.
  As $\overline{\F_p} \equiv^{\ACF} K$, the following sentence holds in $\oFp$:
$$ \forall y (\theta_\phi(y)\rightarrow (\exists x \phi(x,y) \wedge \mbox{"no nontrivial $\F_p$-linear combination of $x$ is a root of $P(Z,y)$"})).$$
In particular, for the polynomial $X^{p^m}-X$ for some $m$ we have
$$\overline{\F_p} \models \exists x \phi(x,b)\wedge\text{"no non-trivial $\F_p$-linear combination of $x$ falls in $\F_{p^m}$"}.$$
Hence for some $k>m,n$ there exists a tuple $a$ from $\F_{p^k}$ such that $$\overline{\F_{p}}\models \phi(a,b)\wedge\text{"$a$ is $\F_p$-linearly independent over $\F_{p^m}$"}.$$ 
As $\phi(x,y)$ is quantifier-free, we also have that $$\F_{p^k}\models \phi(a,b)\wedge\text{"$a$ is $\F_p$-linearly independent over $\F_{p^m}$"}.$$
\end{proof}

\begin{prop}\label{ACFG_models}
  For any $n\in \N$ and any $G_0$ additive subgroup of $\F_{p^n}$ there exists a subgroup $G$ of $\oFp$ such that $ G\cap \F_{p^n} = G_0$ and $(\overline{\F_p},G)\models\ACFG$.
\end{prop}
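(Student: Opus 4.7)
The plan is to build $G$ as an increasing union $G = \bigcup_{i \in \omega} G_i$ of finite subgroups $G_i \subseteq \F_{p^{k_i}}$, starting from $G_0$ with $k_0 = n$ and using the axiomatisation from Fact~\ref{prop_axACFG} as a schedule of tasks to handle one at a time. The single invariant maintained throughout is the \emph{chain condition} $G_{i+1} \cap \F_{p^{k_i}} = G_i$; by induction this propagates to $G \cap \F_{p^{k_i}} = G_i$ in the limit, so in particular $G \cap \F_{p^n} = G_0$.

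I enumerate, in an $\omega$-sequence, all quadruples $(\phi, x', y', b)$ where $\phi(x,y)$ is a quantifier-free $\LLr$-formula, $x'\subseteq x$, $y'\subseteq y$ and $b$ is a tuple from $\oFp$, arranging the enumeration so that $b \in \F_{p^{k_i}}$ whenever $(\phi, x', y', b)$ is processed at stage $i$ (always increment $k_{i+1} > k_i$ and insert each task at the first stage whose finite field contains its parameters). At stage $i+1$, if the axiom hypothesis $\vect{b'} \cap G_i = \set{0} \wedge \theta_\phi(b)$ fails in $\F_{p^{k_i}}$, I set $G_{i+1} = G_i$ and $k_{i+1} = k_i + 1$ and do nothing. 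Otherwise I apply Lemma~\ref{lm_Fp} with $m = k_i$ to produce $k > k_i$ and a witness tuple $a \in \F_{p^k}$ with $\phi(a,b)$ holding and $a$ being $\F_p$-linearly independent over $\F_{p^{k_i}}$; I then set $G_{i+1} = G_i + \vect{a'}$ and $k_{i+1} = k$, where $a'\subseteq a$ is the subtuple corresponding to $x'$.

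Both verifications required at a productive stage reduce to the $\F_p$-linear independence of $a$ over $\F_{p^{k_i}}$. If $g + \alpha \in \F_{p^{k_i}}$ with $g \in G_i$ and $\alpha \in \vect{a'}$, then $\alpha \in \F_{p^{k_i}}$ forces $\alpha = 0$, so $G_{i+1} \cap \F_{p^{k_i}} = G_i$. For the axiom conclusion, given $\lambda \cdot a + \mu \cdot b' = g + \nu \cdot a'$ with $g \in G_i$, rearranging exhibits an $\F_p$-combination of $a$ equal to $g - \mu \cdot b' \in \F_{p^{k_i}}$, so by independence both sides are zero, yielding $\lambda \cdot a = \nu \cdot a' \in \vect{a'}$ and $\mu \cdot b' = g \in G_i \cap \vect{b'} = \set{0}$ using the current-stage hypothesis; hence $\vect{a b'} \cap G_{i+1} = \vect{a'}$. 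Taking $G = \bigcup_i G_i$, for any quadruple $(\phi, x', y', b)$ satisfying $\vect{b'} \cap G = \set{0}$ and $\oFp \models \theta_\phi(b)$, the chain condition yields $\vect{b'} \cap G_i = \vect{b'} \cap G = \set{0}$ and quantifier-freeness of $\theta_\phi$ yields $\F_{p^{k_i}} \models \theta_\phi(b)$ at the stage $i$ where the quadruple is processed, so the construction provided a witness $a$ with $\vect{a b'} \cap G = \vect{a b'} \cap G_{i+1} = \vect{a'}$. Thus $(\oFp, G) \models \ACFG$.

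The main obstacle is precisely designing the one-step extension $G_i \mapsto G_i + \vect{a'}$ cleanly, introducing no spurious elements of $G_{i+1}$ either inside $\F_{p^{k_i}}$ or inside $\vect{a b'} \setminus \vect{a'}$; this is exactly what the $\F_p$-linear independence clause of Lemma~\ref{lm_Fp} delivers, and everything else is enumeration bookkeeping.
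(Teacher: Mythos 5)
Your proof is correct and follows essentially the same strategy as the paper's: an increasing chain of finite subgroups $G_i\subseteq\F_{p^{k_i}}$ maintaining $G_{i+1}\cap\F_{p^{k_i}}=G_i$, with Lemma~\ref{lm_Fp} supplying witnesses that are $\F_p$-linearly independent over the current stage, and the two verifications you isolate are exactly the ones needed. The only real difference is scheduling: you process one quadruple $(\phi,x',y',b)$ per stage with a single application of Lemma~\ref{lm_Fp}, whereas the paper handles, at stage $s$, all parameters from $\F_{p^{n_s}}$ for the first $s$ formulas at once, which forces it to prove an auxiliary claim producing $\abs{x}+1$ jointly independent solution tuples and to adjoin the ``lower triangle'' of the resulting matrix so as to realize every subtuple $x'$ simultaneously; your serialization renders that apparatus unnecessary. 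One bookkeeping point worth making explicit (the paper glosses over it too): choose $k_{i+1}$ to be a common multiple of $k_i$ and the degree of the field containing the new witness, so that the fields $\F_{p^{k_i}}$ genuinely form a chain.
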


\begin{proof}
Start with the following claim.
\begin{claim}
  Let $n\in \N$, let $s\in \N$, let $k_1,\dots, k_s \in \N$ and let $\phi_1(x^1,y^1),\dots,\phi_s(x^s,y^s)$ be quantifier free formulae in $\LLr$. For $i\leq s$, let $B_i = \set{b\in \F_{p^n}^{\abs{y^i}} \mid b\models \theta_{\phi_i}(y)}$. Then there exists $m >n$ such that for all $i\leq s$ and $b\in B_i$ there exists some $\abs{x^i}$-tuples $a^{i,1},\dots,a^{i,k_i}$ (depending on $b$) from $\F_{p^m}$ such that
  \begin{enumerate}
    \item $(a^{i,j}_k)_{i\leq s, j\leq k_i,k\leq \abs{x^{i}}}$ is a $\F_p$-linearly independent tuple over $\F_{p^n}$
    \item $\F_{p^m} \models \phi_i(a^{i,1},b),\dots,\F_{p^m}\models \phi_i(a^{i,k_i},b)$.
  \end{enumerate}
\end{claim}
\begin{proof}[Proof of the Claim]
  We do it step by step, as there are only a finite number of tuples to add. Start with $\phi_1(x^1,y^1)$. Take a first $b\in B_1$. As $\F_{p^n}\models \theta_{\phi_1}(b)$, we use Lemma~\ref{lm_Fp} with $m=n$ to get a first $m_1>n$ such that there exists $a^1\in \F_{p^{m^1}}^{\abs{x_1}}$ such that $\models \phi_1(a^1, b)$ and $a^1$ is $\F_p$-linearly independent over $\F_{p^n}$. Using again Lemma~\ref{lm_Fp} with $m=m_1$ there exists $m_2>m_1$ and a second $a^2\in \F_{p^{m_2}}^{\abs{x^1}}$ such that $\F_{p^{m_2}}\models \phi_1(a^2, b)$ and $a^2$ is $\F_p$-linearly independent over $\F_{p^{m_1}}$. In particular $a^2$ is $\F_p$-linearly independent from $a^1$ over $\F_{p^n}$. So we can construct as many (finitely) solution to $\phi_1(x^1, b)$ as we want which are $\F_p$-linearly independent over $\F_{p^n}$. Once we have enough $\F_p$-linearly independent solutions of $\phi_1(x,b)$, we can do the same trick with another $b'\in B_1$, and add as many (finitely) solution as we want, $\F_p$-linearly independent from one another and from the ones corresponding to $b$, in a finite extension of $\F_{p^{n}}$. Once we have done it for all elements of $B_1$, we do the same with every element $b\in B_2$, continuing to use Lemma~\ref{lm_Fp} to get solutions of $\phi_2(x^2,b)$ $\F_p$-linearly independent from one another and from the previous ones. As every $B_i$ is finite and they are in finite number, we can finish to add $\F_p$-linearly independent solutions of $\phi_i$ in a finite number of steps and the claim is proven.
\end{proof}
From Proposition~\ref{prop_axACFG}, the axioms for $\ACFG$ are given by the following scheme: for all quantifier free $\LLr$-formula $\phi(x,y)$, for all $0\leq  k\leq \abs{x}$ and $0\leq k'\leq \abs{y}$ $$\forall y \left( (\theta_\phi(y)\wedge \vect{y_1,\dots,y_{k'}}\cap G = \set{0})\rightarrow (\exists x \phi(x,y)\wedge \vect{x,y_1,\dots,y_{k'}}\cap G = \vect{x_1,\dots,x_k})\right)$$
with the following convention: $a_1,\dots,a_0 = \emptyset$. 
We will denote the previous sentence by $\Gamma(\phi,k,k')$.
Now we construct by induction a model of $\ACFG$ starting from $(\F_{p^n}, G_0)$. Let $(\phi_i(x^i,y^i))_{i<\omega}$ be an enumeration of all quantifier-free formula in $\LLr$. We construct an increasing sequence $(n_j)_{j<\omega}$ starting with $n_0 = n$ and additive subgroups $G_j$ of $\F_{p^{n_j}}$ such that for all $s<\omega$, for $\phi_1(x^1,y^1),\dots,\phi_s(x^s,y^s)$, for all $1\leq l \leq s$, for all $0\leq k\leq \abs{x^l}$ and $0\leq k'\leq \abs{y^l}$ the following holds for all $\abs{y^l}$-tuples $b$ from $\F_{p^{n_s}}$
\begin{center}
  If $\F_{p^{n_s}}\models \theta_{\phi_l}(b)\wedge \vect{b_1,\dots, b_{k'}}\cap G_s = \set{0}$ then there exists $a^{l,k}$ an $\abs{x^l}$-tuple from $\F_{p^{n_{s+1}}}$ such that $\F_{p^{n_{s+1}}} \models \phi(a^{l,k},b)\wedge \vect{a^{l,k},b_1,\dots,b_{k'}}\cap G_{s+1} = \vect{a^{l,k}_1,\dots,a^{l,k}_k}$.\hfill{($\star$)}
\end{center}
Assume that for some $s<\omega$ we have $n_0,\dots,n_s$ and $G_0\subseteq \F_{p^{n_0}},\dots, G_s\subseteq \F_{p^{n_s}}$ constructed as above.
For every $i\leq s$, we define as above $B_i= \set{b\in \F_{p^{n_s}}^{\abs{y^i}} \mid b\models \theta_{\phi_i}(y)}$, and we apply the claim with $k_i = \abs{x^i}+1$, to get some $n_{s+1}>n_s$. For each $1\leq i \leq s$ and $b\in B_i$ we have $\abs{x^i}+1$ many $\abs{x^i}$-tuples $a^{i,1}(b), \dots, a^{i,k_i}(b)$ from $\F_{p^{n_{s+1}}}$ all $\F_p$-independents over $\F_{p^{n_s}}$ and such that for all $j$, we have $\F_{p^{n_{s+1}}}\models \phi_i(a^{i,j}(b),b)$. Now define $G_{s+1}$ to be

$$G_s\oplus \bigoplus_{1\leq i\leq s}\bigoplus_{b\in B_i}\vect{a_1^{i,2}(b)}\oplus\vect{a_1^{i, 3}(b),a_2^{i,3}(b)}\oplus \dots\oplus \vect{a_1^{i,k_i}(b),\dots,\oplus a_{k_i}^{i,k_i}(b)}.$$
We extend $G_{s}$ by the low triangle of the $(\abs{x_i}+1)\times \abs{x_i}$ matrix $(a^{i,j}_k(b))_{1\leq j\leq k_i,1\leq k\leq \abs{x^i}}$ for each $i<s$ and $b\in B_i$:
$$
\begin{pmatrix}
  a^{i,1}_1 & a^{i,1}_2 & \dots & a^{i,1}_{\abs{x^i}}\\
  \blue{a^{i,2}_1} & a^{i,2}_2 & \dots & a^{i,2}_{\abs{x^i}}\\
  \blue{a^{i,3}_1} & \blue{a^{i,3}_2} & \dots & a^{i,2}_{\abs{x^i}}\\
  \blue{\vdots} &                    & \blue{\ddots}  &    \\
  \blue{a^{i,k_i}_1} & \blue{a^{i,k_i}_2} & \blue{\dots} & \blue{a^{i,k_i}_{\abs{x^i}}}
\end{pmatrix}
.$$

Now we have for each $1\leq i\leq s$ and any $0\leq k\leq \abs{x^i}$ and $0\leq k'\leq \abs{y^i}$, if $b\in B_i$, then there exists $a^{i,k}(b)\in \F_{p^{n_{s+1}}}^{\abs{x^i}}$ such that $  \F_{p^{n_{s+1}}}\models \phi_i(a^{i,k}(b), b)$. By construction if $\vect{b_1,\dots,b_{k'}}\cap G_s = \set{0}$, and by $\F_p$-linear independence of all the $a^{i,k}$, we have $\vect{a^{i,k},b_1,\dots,b_{k'}}\cap G_{s+1} = \vect{a^{i,k}_1,\dots,a^{i,k}_k}$. By induction we construct a familly $(\F_{p^{n_i}},G_i)$ satisfying ($\star$). Now let $$G= \bigcup_{i<\omega} G_i \subseteq \overline{\F_p}.$$ By construction, we have that $(\overline{\F_p},G)$ is a model of $\ACFG$.
\end{proof}

The set of all subgroups of some countable abelian group can be endowed with a topology that is compact, it is called the \emph{Chabauty} topology (see for instance~\cite{deC11}). The Chabauty topology of any countable abelian group is the one of the Cantor space provided that the group is not \emph{minimax}, see~\cite[Proposition A]{deC10}. In the case of the group $(\oFp, +)$, this topology has a very explicit description, in particular, it is the topology of the Cantor space, and is generated by clopens of the form $$\B(H_0,\F_{p^n}) = \set{H\in \Sg(\oFp) \mid  H\cap \F_{p^n} = H_0}$$
for some finite group $H_0\in \Sg(\oFp)$.
Let 
$$\CCC = \set{G\in Sg(\ol{\F_p}) \mid (\ol{\F_p},G)\models \ACFG}.$$
Recall that a set is $G_\delta$ if it is a countable intersection of open sets.
\begin{prop}\label{prop_baire}
  $\CCC$ is a dense $G_\delta$ of $Sg(\ol{\F_p})$.
\end{prop}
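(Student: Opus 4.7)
The plan is to establish density and the $G_\delta$ property separately. Density reduces directly to Proposition~\ref{ACFG_models}, while the $G_\delta$ property follows from the explicit axiomatisation recalled in Proposition~\ref{prop_axACFG}.

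Every non-empty basic open of the Chabauty topology on $\Sg(\oFp)$ has the form $\B(H_0, \F_{p^n})$ for some additive subgroup $H_0$ of $\F_{p^n}$. Proposition~\ref{ACFG_models} directly yields a $G \in \CCC$ with $G \cap \F_{p^n} = H_0$, so $\CCC$ meets every non-empty basic open and is thus dense.

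For the $G_\delta$ property, Proposition~\ref{prop_axACFG} expresses $\CCC$ as the set of $G \in \Sg(\oFp)$ satisfying the countable scheme of axioms $\Gamma(\phi, k, k')$. Each such axiom has the form $\forall y\, (\theta_\phi(y) \wedge \vect{\bar y'} \cap G = \set{0}) \rightarrow (\exists x\, \phi(x, y) \wedge \vect{x, \bar y'} \cap G = \vect{\bar x'})$, and its satisfaction set in $\Sg(\oFp)$ decomposes as the intersection, over the countable family of parameter tuples $b$ from $\oFp$, of the corresponding instance sets. It thus suffices to show that each such instance set is open in $\Sg(\oFp)$.

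Fix $(\phi, k, k', b)$. If $\oFp \not\models \theta_\phi(b)$, the instance is vacuous and the set is all of $\Sg(\oFp)$. Otherwise the instance is equivalent to the disjunction: either $\vect{b_1, \ldots, b_{k'}} \cap G \neq \set{0}$, or there exists a tuple $a$ from $\oFp$ with $\oFp \models \phi(a, b)$ and $\vect{a, b_1, \ldots, b_{k'}} \cap G = \vect{a_1, \ldots, a_k}$. The key point is that the $\F_p$-span of any finite tuple from $\oFp$ is a finite subgroup contained in some $\F_{p^m}$; consequently, the first disjunct, and for each fixed $a$ the condition on $G$ in the second disjunct, depend only on $G \cap \F_{p^m}$ for some $m$, and are therefore clopen in the Chabauty topology, being finite disjunctions of basic sets of the form $\B(H_0, \F_{p^m})$. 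The countable union over $a$ in the second disjunct preserves openness, and the union of the two disjuncts is again open. There is no serious obstacle beyond matching the universal--existential form of the axioms to the Chabauty base, which the finiteness of $\F_p$-spans makes immediate.
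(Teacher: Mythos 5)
Your proof is correct and follows essentially the same route as the paper: density via Proposition~\ref{ACFG_models} applied to each basic ball $\B(H_0,\F_{p^n})$, and the $G_\delta$ property by writing $\CCC$ as a countable intersection, over formulas and parameter tuples, of sets that are open because each instance of the axiom scheme is a countable union of conditions determined by $G\cap\F_{p^m}$ for some finite $m$. Your explicit remark that these conditions are clopen in the Chabauty base is a slightly more careful justification of the openness the paper asserts directly, but the argument is the same.
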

\begin{proof}
We first show that it is dense. By Proposition~\ref{ACFG_models}, evey ball of the form $\B(G_0, \F_{p^n}) = \set{H\in \Sg(\oFp)\mid H\cap \F_{p^n} = G_0}$ contains an element of $\CCC$, hence $\CCC$ is dense.

  We show that it is a $G_\delta$. First, from Proposition~\ref{prop_axACFG}, $\ACFG$ is axiomatised by adding to the theory $\ACF_G$ the following $\LL_G$-sentences, for all tuples of variable $x'\subset x$, $y' \subset y$ and $\LLr$-formula $\phi(x,y)$
$$\forall y (\vect{y'}\cap G = \set{0} \wedge \theta_{\phi}(y)) \rightarrow (\exists x \phi(x,y)\wedge \vect{xy'}\cap G = \vect{x'})$$
which is equivalent to 
$$\forall y\exists x \left[\neg \theta_\phi(y))\vee \vect{y'}\cap G \neq \set{0} \vee (\phi(x,y)\wedge \vect{xy'}\cap G = \vect{x'})\right].$$
Let $\phi(x,y)$, $x'\subseteq x$ and $y'\subseteq y$ be given. Let $b$ be a $\abs{y}$-tuple, and consider the set 
$$\OO_b = \bigcup_{a\in \oFp^{\abs{x}},\oFp\models \phi(a,b)} \set{H\mid \vect{b'}\cap H \neq \set{0}}\cup \set{H\mid \vect{ab'}\cap H = \vect{a'}}.$$
The set $\set{H\mid \vect{b'}\cap H \neq \set{0}}$ is equal to $\bigcup_{u\in \vect{b'}\setminus \set{0}}\set{H\mid u\in H}$ which is clearly open. The set $\set{H\mid \vect{ab'}\cap H = \vect{a'}}$ is also open, so $\OO_b$ is open. Now it is an easy checking that 
$$\CCC = \bigcap_{\phi(x,y),  x'\subseteq x, y'\subseteq y}\bigcap_{b\in \oFp^{\abs{y}},\oFp\models \theta_\phi(b)} \OO_b.$$
Hence $\CCC$ is $G_\delta$.
\end{proof}

\begin{rk}[Ultraproduct model of $\ACFG$]\label{rk_ultramod}
From the proof of Proposition~\ref{ACFG_models}, starting from $G_0\subseteq \F_{p^{n_0}}$, there exists a strictly increasing sequence $(n_i)_{i<\omega}$ of integers and an increasing sequence of groups $G_i\subseteq \F_{p^{n_i}}$ satisfying ($\star$).  Let  $\UU$ be a nonprincipal ultrafilter on $\omega$, it does not take long to see that the ultraproduct $ \prod_\UU (\overline{\F_p}, G_i))$ is a model of $\ACFG$, in which the group is pseudo-finite. The construction of the $G_i's$ in the proof of Proposition~\ref{ACFG_models} is rather artificial. Is there more "natural" generic subgroups of $\overline{\F_p}$? Given an arbitrary set $\set{G_i \mid i<\omega}$ of subgroups of $\overline{\F_p}$ and a non principal ultrafilter $\UU$ on $\omega$, how likely is it that $ \prod_\UU (\overline{\F_p}, G_i)) $ is a model of $\ACFG$?
\end{rk}

\begin{rk}[Pseudo finite generic subgroup of a pseudo-finite field]
  Observe that the proof of Lemma~\ref{lm_Fp} gives the following: if $F$ is an infinite \emph{locally finite} field\footnote{A \emph{locally finite} field is a field such that every finitely generated subfield is finite. Equivalently it is embeddable in $\ol{\F_p}$.}, and that for some universal $\LLr$-formula $\phi(x,y)$ there exists an existential formula $\theta_\phi(y)$ such that for all tuple $b$, we have $F\models \theta_\phi(b)$ if and only if there exists a realisation $a$ of $\phi(x,b)$ in an elementary extension of $F$ such that $a$ is $\F_p$-linearly independent over $F$; then for all finite subfields $F_0\subset F_1$ of $F$, if $F_0\models \theta_\phi(b)$ there exists a finite subfield $F_2$ of $F$ and a tuple $a$ from $F_2$ realizing $\phi(x,b)$ which is $\F_p$-linearly independent over $F_1$. By the same method as in the proof of Theorem~\ref{ACFG_models}, we may construct an increasing sequence of finite fields $(F_i)_{i<\omega}$ and finite subgroups $G_i\subseteq F_i$ such that for an enumeration of universal formula $\phi(x,y)$ and existential formula $\theta_{\phi}(y)$, if $(F_i,G_i)$ satisfies the premise of the axiom, then the conclusion is satisfied in $(F_{i+1},G_{i+1})$. Now consider the theory $\Psf_c$ (the theory of pseudo-finite fields with constants for irreducible polynomials, see~\cite[Section 3]{C97}), it is model-complete, hence every formula is equivalent to an existential formula and a universal formula, with some constants. It is then possible to choose constants $c(i)$ in $F_i$ such that $X^n+c_{n-1,n}(i)X^{n-1}+\cdots+c_{0,n}(i)$ is irreducible over $F_i$, for all $n$. Then one can check that a non principal ultraproduct of $(F_i, c(i))_{i<\omega}$ is a model of $\Psf_c$, hence the ultraproduct $\prod_\UU (F_i,c_i,G_i)$
  is a model of the expansion of a pseudo-finite field of characteristic $p$ with a generic subgroup (see~\cite[Example 5.10]{dE18A}). 
\end{rk}

\begin{rk}[Characteristic $0$]
Let $\PP$ be the set of prime numbers and $\UU$ a non-principal ultrafilter on $\PP$. For each $q\in \PP$ let $G_q$ be a subgroup of $\overline{\F_q}$ such that $(\overline{\F_q},G_q)$ is a model of $\ACFG$ (here we mean $\mathrm{ACF_qG}$). Recall that $\C\cong \prod_{q\in \PP} \overline{\F_q}/ \UU$. Consider the ultraproduct
$$(\C,V) \cong \prod_{q\in \PP} (\overline{\F_q},G_q)/ \UU.$$
It is clear that $V$ is a subgroup of $\C$. For each $q\in \PP$, $$Stab_{\overline{\F_q}}(G_q) := \set{a\in \overline{\F_q}\mid a G_q \subseteq G_q}  = \F_q,$$ this follows from Proposition~\ref{prop_propmod} \textit{(3)}. Hence $F = Stab_{\C}(V)$ is a pseudo-finite subfield of $\C$, and $V$ is an $F$-vector space. It follows from~\cite[Proposition 5.2]{dE18A} that $(\C,V)$ is \emph{not} existentially closed in the class of $\LL_G$-structures consisting of a field of characteristic $0$ in which $G$ is an additive subgroup. Nonetheless, some properties such as the ones in Proposition~\ref{prop_propmod} will be satisfied by $(\C,V)$ (replacing $\F_p$ by $F$).
\end{rk}

\section{Imaginaries}\label{sec_im}

Let $(K,G)$ be a saturated model of $\ACFG$. It is easy to see that for all $a\in K\setminus G$, there exists $b\in K\setminus G$ algebraically independent from $a$ over $\F_p$ such that $a-b\in G$ (see Lemma~\ref{lm_fonda}). Let $\alpha = a/G = b/G\in (K,G)^{eq}$. If it exists, a canonical parameter for $\alpha$ in $K$ would be definable over both $a$ and $b$, hence it would be definable over an element of $\overline{\F_p}$. This would give an embedding of $K/G$ into the countable set $\dcl^{eq}(\emptyset)$ which is absurd in a saturated model $(K,G)$ for cardinality reasons.

Let $(K,G)$ be a model of $\ACFG$, there is a canonical projection $$\pi : K \rightarrow K/G.$$

Consider the 2-sorted structure, $(K, K/G)$ with the $\LLr$-structure on $K$, the group structure on $K/G$ (in the language of abelian groups) and the group epimorphism $\pi : K \rightarrow K/G$. We forget about the predicate $G$ as it is $0$-definable in $(K,K/G)$. 
The structure $(K, K/G)$ is bi-interpretable with $(K,G)$. We fix $(K,G)$ and $(K,K/G)$ for the rest of this section.

\noindent In this section, we show that $(K,K/G)$ has weak elimination of imaginaries, hence imaginaries of $(K,G)$ can be weakly eliminated up to the quotient $K/G$.\\

\noindent Some definable imaginaries in $(K,G)$ can be easily eliminated in the structure $(K,K/G)$.
\begin{ex}\label{ex_elim}
  Let $\zeta : K \rightarrow K$ be a $\LLr$-definable group endomorphism. Then in $(K,K/G)^{eq}$, every element in $K/\zeta^{-1}(G)$ is interdefinable with an element in $K/G$. Indeed, for any element $a\in K$ and any automorphism $\sigma$ of $(K,K/G)$, $\sigma(a) - a\in \zeta^{-1}(G)$ if and only if $\sigma$ fixes $\pi(\zeta(a))$, hence $\pi(\zeta(a))$ is a canonical parameter for the class of $a$ modulo $\zeta^{-1}(G)$. 
  
  Let $\zeta_1,\cdots,\zeta_n$ be $\F_p$-linearly independent $\emptyset$-$\LLr$-definable group endomorphisms $K\rightarrow K$. Let $\pi_\zeta : K \rightarrow K/{\zeta_1^{-1}(G)\cap \cdots \cap \zeta_n^{-1}(G)}$ and consider an element $\alpha $ of the sort $K/{\zeta_1^{-1}(G)\cap \cdots \cap \zeta_n^{-1}(G)}$ in $(K,K/G)^{eq}$. From Proposition~\ref{prop_endodef} the natural map $$K/{\zeta_1^{-1}(G)\cap \cdots \cap \zeta_n^{-1}(G)}\rightarrow K/\zeta_1^{-1}(G) \times \cdots \times K/\zeta_n^{-1}(G)$$ is an isomorphism. Let $a$ be such that $\pi_\zeta(a) = \alpha$. For each $1\leq i\leq n$ let $\alpha_i = \pi(\zeta_i^{-1}(a))\in K/G$. Then the tuple $\alpha_1,\cdots,\alpha_n$ is a canonical parameter for $\alpha$. 

\end{ex}
If quotients of the form $K/\zeta^{-1}(G)$ can be fully eliminated, what about quotients of the form $K/\zeta(G)$? In that case the kernel of $\zeta$ is a finite vector space, hence a canonical parameter for $\alpha\in K/\zeta(G)$ is a finite set of the form $\pi(a+\ker(\zeta))$ which is not necessarily eliminable in $(K,K/G)$ as shows Example~\ref{ex_notelim}. We even show in Example~\ref{ex_notelimst} that adding canonical parameters for the sort $K/G$ is not sufficient to eliminate all finite imaginaries of the structure $(K,K/G)$.

In this section, greek letters $\Gamma$, $\alpha$ denote subsets or tuples (which might be infinite) from $K/G$. Any tuple in the structure $(K,K/G)$ will be denoted by $a\gamma$, with $a$ a tuple from $K$, $\gamma$ a tuple from $K/G$. We also extend $\pi$ for (finite or infinite) tuples by $\pi(a) := (\pi(a_i))_{i}$.

\subsection{First steps with imaginaries}\label{subsec_im}

Let $\sigma$ be a field automorphism of $K$. It is clear that the following are equivalent:
\begin{itemize}
\item $\sigma$ is an $\LL^G$-automorphism of $K$;
\item there exists $\tilde{\sigma} :K/G \rightarrow K/G$ such that $ \pi \circ \sigma = \tilde{\sigma} \circ \pi$.
\[
  \begin{tikzcd}
  (K,G) \arrow[r, "\sigma"] \arrow[d, "\pi"]& (K,G) \arrow[d, "\pi"] \\
  K/G \arrow[r, "\tilde{\sigma}"] & K/G
\end{tikzcd}
\]
\end{itemize}
An automorphism of the structure $(K,K/G)$ is a pair $(\sigma,\tilde{\sigma})$ as above. It follows that for $a,b,C$ from $K$, we have 
$$a\equiv_C^{(K,G)} b \iff a\equiv_C^{(K,K/G)}b.$$
In this section, the relation $\equiv$ means having the same type in the structure $(K,K/G)$.

\begin{lm}\label{lm_fonda}
Let $a,b$ be two tuples of the same length from $K$. Let $C,D\subseteq K$ and assume that
\begin{itemize}
\item $\pi(a)$ is an $\F_p$-independent tuple over $\pi(\overline{C})$
\item $\pi(b)$ is an $\F_p$-independent tuple over $\pi(\overline{C})$
\end{itemize}
Then there exists $a'\equiv_C a$ such that $a' \indi{\ACF}_C \quad D$ and $\pi(a') = \pi(b)$. 
\end{lm}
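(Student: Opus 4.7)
The plan is to build an abstract $\LL_G$-extension $(F,H)$ of $(\overline{CbD},G(\overline{CbD}))$ inside some field extension $\tilde K$ of $K$, and then apply the type realization fact of Subsection~\ref{sub_tp} at the base $CbD$.

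In $\tilde K$, first pick a tuple $a^{*}$ such that $a^{*}\indi{\ACF}_{C} K$ and there is an $\LLr$-isomorphism $\sigma : \overline{Ca}\to \overline{Ca^{*}}$ over $C$ with $\sigma(a)=a^{*}$; this is standard, by identifying a transcendence basis of $\overline{Ca}$ over $\overline{C}$ with a fresh transcendence basis over $K$. Then $\overline{Ca^{*}}\cap K=\overline{C}$ and $\overline{Ca^{*}}$, $\overline{CbD}$ are linearly disjoint over $\overline{C}$. Set $F=\overline{Ca^{*}bD}\subseteq \tilde K$ and
\[
H:=\sigma(G(\overline{Ca}))+G(\overline{CbD})+\langle a^{*}_i-b_i \mid i\rangle_{\F_p}\subseteq F.
\]

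The main computation is the pair of equalities
\[H\cap \overline{CbD}=G(\overline{CbD})\quad\text{and}\quad H\cap \overline{Ca^{*}}=\sigma(G(\overline{Ca})).\]
For the first, take $h=h_{1}+h_{2}+\sum_i\lambda_i(a^{*}_i-b_i)\in H\cap\overline{CbD}$ with $h_{1}\in\sigma(G(\overline{Ca}))$, $h_{2}\in G(\overline{CbD})$, $\lambda_i\in\F_p$, and rewrite as $h_{1}+\sum_i\lambda_i a^{*}_i=h-h_{2}+\sum_i\lambda_i b_i$. The left hand side lies in $\overline{Ca^{*}}$ and the right hand side in $\overline{CbD}$, so by linear disjointness both lie in $\overline{Ca^{*}}\cap\overline{CbD}=\overline{C}$. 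Applying $\sigma^{-1}$ to the left, $\sum_i\lambda_i a_i\in\overline{C}+G(\overline{Ca})$, i.e., $\sum_i\lambda_i\pi(a_i)\in\pi(\overline{C})$; the $\F_p$-independence of $\pi(a)$ over $\pi(\overline{C})$ then forces every $\lambda_i=0$. The residual equation gives $h_{1}\in\sigma(G(\overline{Ca}))\cap\overline{C}=G(\overline{C})$, whence $h=h_{1}+h_{2}\in G(\overline{CbD})$. The second equality is proved symmetrically, using the $\F_p$-independence of $\pi(b)$ over $\pi(\overline{C})$. Combined with $F\cap K=\overline{CbD}$, which is immediate from $a^{*}\indi{\ACF}_{C} K$, this verifies the two hypotheses of the realization fact at base $CbD$.

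The realization fact then yields $a'\in K$ together with an $\LL_G$-isomorphism $f:(\overline{Ca'bD},G(\overline{Ca'bD}))\to(F,H)$ over $CbD$ with $f(a')=a^{*}$. Restricting $f$ to $\overline{Ca'}$ and composing with $\sigma^{-1}$ gives, thanks to the equality $H\cap\overline{Ca^{*}}=\sigma(G(\overline{Ca}))$, an $\LL_G$-isomorphism $(\overline{Ca'},G(\overline{Ca'}))\to(\overline{Ca},G(\overline{Ca}))$ over $C$ sending $a'$ to $a$, so $a'\equiv_{C} a$. Since $f$ fixes $D$ and $a^{*}\indi{\ACF}_{C} D$, we get $a'\indi{\ACF}_{C} D$. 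Finally, for each $i$ the element $a^{*}_i-b_i\in H$ satisfies $a'_i-b_i=f^{-1}(a^{*}_i-b_i)\in G$, whence $\pi(a')=\pi(b)$. The main obstacle is the pair of subgroup-intersection equalities above, which is precisely where both $\F_p$-independence hypotheses are essential.
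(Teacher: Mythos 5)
Your proposal is correct and follows essentially the same route as the paper: the paper also realises the tuple $a^{*}$ (called $x$ there) as an $\ACF$-independent copy of $a$ over $K$, forms the group $H = G_{Cx} + G(\overline{CbD}) + \vect{x_i - b_i}$, and verifies the same two intersection identities using the two $\F_p$-independence hypotheses before invoking the type-realization fact. The only differences are notational (the paper's isomorphism $f:\overline{Cx}\to\overline{Ca}$ is your $\sigma^{-1}$).
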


\begin{proof}
Let $x\indi{\ACF}_C\quad K$ such that $x \equiv^{\ACF}_C a$, and $f : \overline{Cx} \rightarrow \overline{Ca}$ a field isomorphism over $C$ sending $x$ to $a$. Let $G_{Cx} = f^{-1}(G(\overline{Ca}))$. Consider now the subgroup of $\overline{CDbx}$ defined by $$H = G_{Cx}+G(\overline{CbD}) + \vect{x_i - b_i \mid i\leq \abs{x}}.$$
We show that the type in the sense of $\ACFG$ defined by the pair $(\overline{CDbx}, H)$ is consistent. 
As $x\indi{\ACF}_C\quad K$ we have $\overline{CDbx}\cap K = \overline{CDb}$. In order to prove that $H\cap \overline{CDb} = G(\overline{CDb})$, it suffices to show that $$\overline{CDb}\cap (G_{Cx}+ \vect{x_i - b_i \mid i\leq \abs{x}})\subseteq G(\overline{C}).$$
Assume that $g_{Cx}+\sum_i\lambda_i(x_i-b_i) \in \overline{CDb}$, where $g_{Cx}\in G_{Cx}$. It follows that $g_{Cx}+\sum_i\lambda_i x_i \in \overline{CDb}$. On the other hand $g_{Cx}+\sum_i\lambda_i x_i \in \overline{Cx}$. As $x\indi{\ACF}_C\quad bD$ we have $\overline{Cx}\cap \overline{CDb} = \overline{C}$ hence $g_{Cx}+\sum_i\lambda_i x_i \in \overline{C}$. 
Apply $\pi\circ f$ to get that $\sum_i\lambda_i\pi(a_i)\in \pi(\overline{C})$ hence by hypothesis $\lambda_i = 0$ for all $i\leq \abs{x}$. It follows that $g_{Cx}\in \overline{C}$ and so $g_{Cx}\in G(\overline{C})$. We have showed that $\overline{CDb}\cap (G_{Cx}+ \vect{x_i - b_i \mid i\leq \abs{x}})\subseteq G(\overline{C})$. The type is consistent by Subsection~\ref{sub_tp}, so realised by say $a'$. As $x\indi{\ACF}_C\quad D$ we have $a'\indi{\ACF}_C\quad D$.
In order to show that $a'\equiv_C a$ we have to check that $H\cap \overline{Cx} = G_{Cx}$, this is similar to the argument above, using this time that $\pi(b)$ is $\F_p$-independent over $\pi(\overline{C})$. We have $a_i'-b_i\in G$ hence $\pi(a_i') = \pi(b_i)$, for all $i\leq \abs{x}$.
\end{proof}

\begin{lm}[Minimal representative]\label{lm_min}
Let $a$, $C$ be in $K$ such that $\pi(a)$ is an $\F_p$-independent tuple over $\pi(\overline{C})$. Then there exists $a'$ of same length as $a$, algebraically independent over $Cb$ such that
\begin{itemize}
\item $\pi(a)=\pi(a')$
\item $\pi(\overline{Ca'}) = \vect{\pi(\overline{C})\pi(a)}$
\item $a'\indi{\ACF}_C\quad b$.
\end{itemize} 
\end{lm}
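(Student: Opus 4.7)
The plan is to realize a suitable $\LL_G$-isomorphism type over $\overline{Cab}$, via the consistency criterion from Subsection~\ref{sub_tp}. I will build $a'$ as the realization of a pair $(\overline{Cabx}, H)$, for a tuple $x$ of fresh transcendentals and a subgroup $H$ designed to force each of the three conclusions.

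First, I would take $x = (x_1,\dots,x_n)$ of length $n = |a|$ in a field extension of $K$, algebraically independent over $K$; this guarantees $\overline{Cabx}\cap K = \overline{Cab}$ and $\overline{Cx}\cap\overline{Cab} = \overline{C}$. Since no nontrivial $\F_p$-combination of the $x_i$ lies in $\overline{C}$, I fix an $\F_p$-vector space complement $V$ of $\overline{C}\oplus\vect{x_i}_{\F_p}$ inside $\overline{Cx}$, obtaining a direct sum decomposition $\overline{Cx} = \overline{C}\oplus\vect{x_i}_{\F_p}\oplus V$ of $\F_p$-vector spaces. I then set
$$H := G(\overline{Cab}) + \vect{x_i - a_i \mid i\leq n}_{\F_p} + V \subseteq \overline{Cabx}.$$

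The crucial verification will be $H\cap\overline{Cab} = G(\overline{Cab})$: from $g + \sum_i\lambda_i(x_i-a_i) + v \in \overline{Cab}$ (with $g\in G(\overline{Cab})$, $\lambda_i\in\F_p$, $v\in V$) one extracts $\sum_i\lambda_i x_i + v \in \overline{Cab}\cap\overline{Cx} = \overline{C}$, and the direct sum decomposition forces $\lambda_i = 0$ and $v = 0$. Combined with $\overline{Cabx}\cap K = \overline{Cab}$, the realization principle of Subsection~\ref{sub_tp} then produces $a'\in K$ together with an $\LL_G$-isomorphism $f : (\overline{Caba'}, G(\overline{Caba'})) \to (\overline{Cabx}, H)$ fixing $\overline{Cab}$ and sending $a'$ to $x$.

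Each conclusion will fall out of $f$ directly. Algebraic independence of $a'$ over $Cb$ (whence $a'\indi{\ACF}_C b$) is inherited from that of $x$ over $\overline{Cab}$. The inclusion $x_i-a_i\in H$ gives $a'_i-a_i\in G$, so $\pi(a')=\pi(a)$. For any $y\in\overline{Ca'}$, decomposing $f(y)\in\overline{Cx}$ as $c + \sum_i\lambda_i x_i + v$ with $c\in\overline{C}$, $\lambda_i\in\F_p$, $v\in V\subseteq H$, and pulling back through $f$ gives $y = c + \sum_i\lambda_i a'_i + f^{-1}(v)$ with $f^{-1}(v)\in G$; hence $\pi(\overline{Ca'})\subseteq\vect{\pi(\overline{C}),\pi(a')}$, the reverse inclusion being trivial.

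The main obstacle is the design of $H$ itself: the summand $V$ must be large enough (inside $\overline{Cx}$) to collapse the quotient $\pi(\overline{Ca'})$ to the desired $\F_p$-span, while $H$ as a whole must remain small enough not to enlarge $G$ on $\overline{Cab}$. Choosing $V$ inside $\overline{Cx}$ rather than inside $\overline{Cabx}$, together with the algebraic independence of $x$ over $\overline{Cab}$, is precisely what keeps the three summands of $H$ in direct sum modulo $\overline{Cab}$ and reconciles the two constraints.
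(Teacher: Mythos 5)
Your proposal is correct and follows essentially the same route as the paper: realize the pair $(\overline{Cabx},H)$ with $H = G(\overline{Cab})+\vect{x-a}+V$ for $V$ an $\F_p$-complement of $\overline{C}\oplus\vect{x}$ in $\overline{Cx}$, and verify $H\cap\overline{Cab}=G(\overline{Cab})$ exactly as you do. The only (harmless) variation is that for the second bullet you pull the decomposition of $\overline{Cx}$ back through $f$ directly, whereas the paper first computes $H\cap\overline{Cx}=G(\overline{C})+V$ and then transports it; both hinge on $V\subseteq H$.
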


\begin{proof}
It is again a type to realize. Consider $x$ of same length as $a$ and algebraically independent over $Cba$. Let $V$ be a $\F_p$-vector space complement to $\overline{C}\oplus \vect{x}$ in $\overline{Cx}$ and set 
$$H = G(\overline{Cab})+\vect{x-a}+V.$$
We check that the pair $(\overline{Cabx}, H)$ defines a consistent type over $Cab$. First $H\cap \overline{Cab} = G(\overline{Cab})+ (\vect{x-a}+V)\cap \overline{Cab}$. For $v\in V$, if $\sum_i \lambda_i(x_i-a_i) +v\in \overline{Cab}$ then $\sum_i \lambda_i x_i + v\in \overline{Cab}$. As $\overline{Cab}\cap \overline{Cx} = \overline C$, $\sum_i \lambda_i x_i + v\in \overline{C}$ hence $v =0$ and, as $x$ is $\F_p$-independent over $\overline{C}$, $\lambda_i = 0$ for all $i\leq \abs x$. The type is consistent by Subsection~\ref{sub_tp}. We show that $H\cap \overline{Cx} = G(\overline C )+V$. First $H\cap \overline{Cx} = V+\overline{Cx}\cap (G(\overline{Cab})+\vect{x-a})$. Let $g+ \sum_i \lambda_i (x_i-a_i) \in (G(\overline{Cab})+\vect{x-a})\cap \overline{Cx}$, then $g+\sum_i \lambda_i a_i \in \overline{Cab}\cap \overline{Cx} = \overline C$ and so applying $\pi$ gives $\sum_i \lambda_i \pi(a_i) \in \pi(\overline{C})$ hence $\lambda_i = 0$ for all $i\leq \abs{x}$. It follows that $\overline{Cx}\cap (G(\overline{Cab})+\vect{x-a})=G(\overline{C})$ hence $H\cap \overline{Cx} = G(\overline C )+V$. Assume that $a'$ realises this type, it is clear that $\pi(a)=\pi(a')$ and $a'\indi{\ACF}_C\quad b$. By construction there exists $V'\subseteq \overline{Ca'}$ such that $\overline{Ca'} = \overline C \oplus \vect{a} \oplus V'$ and $G(\overline{Ca'}) = G(\overline{C})\oplus V'$, so it follows that $\pi(\overline{Ca'}) = \pi(C)\oplus \vect{\pi(a')}$.\end{proof}

In particular if $\alpha$ is an $\F_p$-independent tuple over $\pi(\overline{C})$ then there exists some algebraically independent tuple $a$ over $C$ such that $\pi(a) = \alpha$ and $\pi(\overline{Ca}) = \vect{\pi(\overline{C})\alpha}$. We call such a tuple a \emph{minimal representative} of $\alpha$ over $C$. Lemma~\ref{lm_min} states that minimal representatives always exists and that they can be taken independent in the sense of fields from any parameters.

\begin{cor}\label{cor_typesinquo}
Let $\alpha$ and $\beta$ be tuples in $K/G$ of the same length, $\gamma$ tuple from $K/G$ and $C\subseteq K$. If $\alpha$ and $\beta$ are $\F_p$-independent tuples over $\vect{\pi(\overline{C})\gamma}$ then $\alpha \equiv_{C\gamma} \beta$.
\end{cor}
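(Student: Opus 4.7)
The idea is to promote Lemma~\ref{lm_fonda} from a statement about types over $C$ to one about types over $C\gamma$, by absorbing $\gamma$ into the base through a carefully chosen preimage $c\in K$. Concretely, if we can find a tuple $c\in K$ with $\pi(c)=\gamma$ and $\pi(\overline{Cc})=\vect{\pi(\overline{C})\gamma}$, then the $\F_p$-independence hypothesis on $\alpha,\beta$ becomes $\F_p$-independence over $\pi(\overline{Cc})$, which puts us exactly in the setting of Lemma~\ref{lm_fonda}.

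The first step of the plan is to construct such a $c$. Take $\gamma_0$ a maximal $\F_p$-independent subtuple of $\gamma$ over $\pi(\overline{C})$, so that $\vect{\pi(\overline{C})\gamma_0}=\vect{\pi(\overline{C})\gamma}$. Lemma~\ref{lm_min} supplies a minimal representative $c_0\in K$ of $\gamma_0$ over $C$, with $\pi(\overline{Cc_0})=\vect{\pi(\overline{C})\gamma_0}$. For each entry $\gamma_i$ of $\gamma$ not already in $\gamma_0$, write $\gamma_i=\sum_j\lambda_{ij}(\gamma_0)_j+\delta_i$ with $\delta_i\in\pi(\overline{C})$, lift $\delta_i$ through $\pi$ to some $d_i\in\overline{C}$, and set $c_i:=\sum_j\lambda_{ij}(c_0)_j+d_i\in\overline{Cc_0}$, a preimage of $\gamma_i$. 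Arranging these entries together with $c_0$ as a subtuple yields a tuple $c$ with $\pi(c)=\gamma$ and $\overline{Cc}=\overline{Cc_0}$, so $\pi(\overline{Cc})=\vect{\pi(\overline{C})\gamma}$ as required.

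The second step is to apply Lemma~\ref{lm_fonda}. Pick arbitrary lifts $a,b\in K$ of $\alpha,\beta$; by hypothesis $\pi(a)$ and $\pi(b)$ are $\F_p$-independent over $\pi(\overline{Cc})$, so Lemma~\ref{lm_fonda} (with base $Cc$ and, say, empty $D$) produces $a'\equiv_{Cc}a$ with $\pi(a')=\pi(b)=\beta$. The $\LL_G$-automorphism $\sigma$ of $(K,G)$ witnessing $a'\equiv_{Cc}a$ induces, via the bi-interpretability of $(K,G)$ and $(K,K/G)$, an automorphism $(\sigma,\tilde\sigma)$ of $(K,K/G)$. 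It fixes $C$ and $c$ pointwise, hence fixes $\gamma=\pi(c)$ pointwise, while $\tilde\sigma(\alpha)=\pi(\sigma(a))=\pi(a')=\beta$. This yields $\alpha\equiv_{C\gamma}\beta$.

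No step is really hard: the only point that requires attention is ensuring that the choice of lift $c$ of $\gamma$ does not inflate $\pi(\overline{Cc})$ beyond $\vect{\pi(\overline{C})\gamma}$, which is precisely what the minimal representative construction of Lemma~\ref{lm_min} secures. After that, the corollary is a direct instantiation of Lemma~\ref{lm_fonda}.
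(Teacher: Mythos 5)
Your proof is correct and follows essentially the same route as the paper: reduce to a base enlarged by a minimal representative of $\gamma$ (Lemma~\ref{lm_min}), apply Lemma~\ref{lm_fonda} over that base to move a lift of $\alpha$ onto a lift of $\beta$, and observe that the resulting automorphism fixes $\gamma$ because it fixes the representative. The only difference is cosmetic: you spell out the reduction to a linearly independent $\gamma$ by explicitly lifting the dependent coordinates into $\overline{Cc_0}$, where the paper simply says ``we may assume $\gamma$ is linearly independent over $\pi(\overline{C})$.''
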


\begin{proof}
  We may assume that $\gamma$ is linearly independent over $\pi(\overline{C})$ and let $r_\gamma $ be a minimal representative of $\gamma$ over $C$. Let $a$ and $b$ be representatives of $\alpha$ and $\beta $ over $\ol{Cr_\gamma}$. Using Lemma~\ref{lm_fonda}, there exists $a' \equiv_{Cr_\gamma} a$ such that $\pi(a') = \pi(b) = \beta$. Let $\sigma$ be an automorphism of $(K,K/G)$ over $Cr_\gamma$ sending $a$ on $a'$. It is clear that $\sigma$ fixes $\gamma$ and sends $\alpha$ to $\beta$ hence $\alpha \equiv_{C\gamma} \beta$.
\end{proof}

\begin{rk}\label{rk_indvect}
  A consequence of Corollary~\ref{cor_typesinquo} is that the induced structure on $K/G$ is the one of a pure $\F_p$-vector space.
\end{rk}
We will describe the algebraic closure $\acl$ in the structure $(K,K/G)$. Surjectivity of $\pi : K\rightarrow K/G$ implies that every formula in the language of $(K,K/G)$ without parameters and with free variables in the home sort $K$ is equivalent to an $\LL^G$-formula. In particular $\acl(C)\cap K = \overline{C}$ for all $C\subseteq K$.

\begin{cor}\label{cor_acleq}
Let $C\subseteq K$ and $\gamma \subseteq K/G$, then 
\begin{itemize}
\item $\acl(C\gamma) \cap K = \overline{C}$
\item $\acl(C\gamma)\cap K/G = \vect{\pi(\overline{C})\gamma}$.
\end{itemize}
\end{cor}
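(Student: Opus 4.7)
My plan is to treat each bullet separately, with the $\supseteq$ inclusions being routine: $\overline{C} = \acl_{\ACFG}(C) \subseteq \acl(C\gamma)\cap K$, and $\vect{\pi(\overline{C})\gamma} \subseteq \acl(C\gamma)$ holds because $\pi$ and $\F_p$-linear combinations are $\emptyset$-definable and $\pi(\overline{C}) \subseteq \pi(\acl(C)) \subseteq \acl(C)$.

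For the nontrivial direction of the second bullet, I would produce infinitely many $C\gamma$-conjugates of any $\alpha \in K/G \setminus \vect{\pi(\overline{C})\gamma}$. In a sufficiently saturated model, the proper $\F_p$-subspace $\vect{\pi(\overline{C})\gamma}$ is small, so there are infinitely many distinct $\beta_i \in K/G$ avoiding it. For each such $\beta_i$, the singletons $\alpha$ and $\beta_i$ are each $\F_p$-independent over $\vect{\pi(\overline{C})\gamma}$ (simply because neither sits in the span), so Corollary~\ref{cor_typesinquo} directly gives $\alpha \equiv_{C\gamma} \beta_i$, and the required infinitely many conjugates appear.

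The first bullet is slightly more delicate. I would first reduce to the case where $\gamma$ is $\F_p$-independent over $\pi(\overline{C})$: by replacing $\gamma$ with an $\F_p$-basis $\gamma'$ of its span modulo $\pi(\overline{C})$, one has $\gamma' \in \dcl(C\gamma)$ and every entry of $\gamma$ lies in $\acl(C\gamma')$, so $\acl(C\gamma) = \acl(C\gamma')$. Given $a \in K \setminus \overline{C}$, I would apply Lemma~\ref{lm_min} with parameter $b := a$ to an arbitrary initial representative of $\gamma$, obtaining a minimal representative $r_\gamma$ with $\pi(r_\gamma) = \gamma$ and $r_\gamma \indi{\ACF}_C\quad a$. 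From $\ACF$-independence, $\overline{Cr_\gamma} \cap \overline{Ca} = \overline{C}$, so $a \notin \overline{Cr_\gamma} = \acl_{\ACFG}(Cr_\gamma)$, and $a$ therefore admits infinitely many $\ACFG$-conjugates over $Cr_\gamma$. Any automorphism of $(K,G)$ fixing $Cr_\gamma$ pointwise lifts to one of $(K,K/G)$ fixing $\gamma = \pi(r_\gamma)$, so these are in particular $C\gamma$-conjugates of $a$.

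The main obstacle is coordinating the choice of representative $r_\gamma$ with the element $a$ — one needs $r_\gamma$ to be sufficiently generic with respect to $a$ so that $a$'s non-algebraicity over $C$ propagates to non-algebraicity over $Cr_\gamma$. This is exactly what the extra independence clause in Lemma~\ref{lm_min} provides; once that is in place, the problem reduces to the already-understood behaviour of algebraic closure over sets in $K$.
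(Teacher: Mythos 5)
Your proof is correct and follows essentially the same route as the paper: the second bullet is the paper's argument verbatim (Corollary~\ref{cor_typesinquo} applied to singletons outside $\vect{\pi(\overline{C})\gamma}$), and for the first bullet the paper also reduces to $\gamma$ being $\F_p$-independent over $\pi(\overline{C})$ and invokes Lemma~\ref{lm_min}, though it argues dually by taking \emph{two} $\ACF$-independent representatives $r_\gamma, r_\gamma'$ of $\gamma$ and trapping any algebraic $u$ inside $\overline{Cr_\gamma}\cap\overline{Cr_\gamma'}=\overline{C}$, rather than your single representative independent from the given $a$ together with a count of conjugates lifted from $\mathrm{Aut}((K,G)/Cr_\gamma)$. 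Both variants rest on the same lemma and the same correspondence between $\LL_G$-automorphisms of $(K,G)$ and automorphisms of $(K,K/G)$, so there is nothing missing.
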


\begin{proof}
For the first assertion, we may assume that $\gamma$ is an $\F_p$-independent tuple over $\pi(\overline{C})$. Let $u$ be in $\acl(C\gamma)\cap K$ witnessed by an algebraic formula $\phi(x,c,\gamma)$ with $c\in C$. 
Using twice Lemma~\ref{lm_min}, let $r_\gamma$ be a minimal representative of $\gamma$ over $C$, and $r_\gamma '$ a minimal representative of $\gamma$ over $C$ such that $r_\gamma ' \indi{\ACF}_C \quad  r_\gamma$. As $u$ satisfies $\phi(x,c,\pi(r_\gamma))$ and $\phi(x,c,\pi(r_\gamma '))$, $u$ belongs to $\overline{Cr_\gamma}\cap \overline{Cr_\gamma '} = \overline{C}$ (note that we don't use the minimality here). The reverse inclusion being trivial, it follows that $\acl(C\gamma) \cap K = \overline{C}$.

For the second assertion, assume that $\alpha \notin \vect{\pi(\overline{C})\gamma}$. By Corollary~\ref{cor_typesinquo}, any element in $K/G\setminus \vect{\pi(\overline{C})\gamma}$ has the same type as $\alpha$ over $C\gamma$ hence $\alpha\notin \acl(C\gamma)$. The reverse inclusion being trivial, it follows that $\acl(C\gamma)\cap K/G = \vect{\pi(\overline{C})\gamma}$.
\end{proof}

\subsection{Independence in $(K,K/G)$}\label{subsec_dual}

Recall the weak independence in $(K,G)$:
$$a\indi{w}_C b \iff a\indi{\ACF}_C \quad \  b \text{ and } G(\overline{Ca} + \overline{Cb}) = G(\overline{Ca})+G(\overline{Cb})$$
It is an easy checking that under the assumption that $\overline{Ca}\cap \overline{Cb} = \overline{C}$ the following two assertions are equivalent:
\begin{itemize}
\item $G(\overline{Ca} + \overline{Cb}) = G(\overline{Ca})+G(\overline{Cb})$
\item $\pi(\overline{Ca})\cap \pi(\overline{Cb}) = \pi(\overline{C})$
\end{itemize}

We define the following relation in $(K,K/G)$:
$$a\alpha \indi{w}_{C\gamma} b\beta \iff a\indi{\ACF}_C \quad \ b \text{ and } \vect{\pi(\overline{Ca})\alpha\gamma}\cap \vect{\pi(\overline{Cb})\beta\gamma} = \vect{\pi(\overline{C})\gamma}$$
It is the right candidate for Kim-independence in $(K,K/G)$. We study only the restriction of this relation to sets $a\alpha,b\beta,C\gamma$ with $\alpha\beta\gamma\subseteq \pi(\overline{Ca})\cap\pi(\overline{Cb})$. This restriction can be described only in terms of the structure $(K,G)$ as we will see now. \\

An infinite tuple $\lambda$ of elements of $\F_p$ is \emph{almost trivial} if $\lambda_i = 0$ for cofinitely many $i$'s. If $\gamma$ is an infinite tuple, an element $u\in \vect{\gamma}$ is an almost trivial linear combination of $\gamma_i$'s, i.e. there exists $\lambda$ almost trivial such that $u=\sum_i \lambda_i \gamma_i$. Given two tuples $a$ and $b$, the tuple consisting of the coordinates $a_i-b_i$ is denoted by $a-b$.

\begin{lm}\label{lm_ind*}
  Let $a,b$ be tuples such that $\gamma $ is a (finite or infinite) tuple from $\pi(\overline{a})\cap \pi(\overline{b})$. Assume that $\overline{a}\cap \overline{b} = \overline{C}$, then the following are equivalent:
\begin{enumerate}
\item $\pi(\overline{a})\cap \pi(\overline{b}) = \vect{\pi(\overline{C})\gamma}$
\item $G(\overline{a}+\overline{b}) = G(\overline{a})+G(\overline{b}) + \vect{r^a - r^b}$ for some (all) representatives $r^a$, $r^b$ of $\gamma$ in $\overline{a}$ and $\overline{b}$ respectively. 
\end{enumerate}
\end{lm}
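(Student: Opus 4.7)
The plan is to prove the equivalence by direct element chasing, exploiting the surjection $\pi$ and the hypothesis $\overline{a}\cap\overline{b}=\overline{C}$. Before starting, I would note that the right-hand side of $(2)$ does not depend on the choice of representatives: if $\tilde r^a,\tilde r^b$ is another pair, then $r^a-\tilde r^a\in G\cap\overline{a}=G(\overline{a})$ and $r^b-\tilde r^b\in G(\overline{b})$, so the span $\vect{r^a-r^b}$ coincides with $\vect{\tilde r^a-\tilde r^b}$ modulo $G(\overline{a})+G(\overline{b})$. This justifies the "some/all" parenthetical in the statement.

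For $(1)\Rightarrow(2)$, I would take $g\in G(\overline{a}+\overline{b})$ and write $g=u+v$ with $u\in\overline{a}$ and $v\in\overline{b}$. Since $\pi(g)=0$ we have $\pi(u)=-\pi(v)\in\pi(\overline{a})\cap\pi(\overline{b})=\vect{\pi(\overline{C})\gamma}$ by $(1)$. Pick $c\in\overline{C}$ and an almost trivial tuple $\lambda$ of elements of $\F_p$ such that $\pi(u)=\pi(c)+\sum_i\lambda_i\gamma_i$. Then $u-c-\sum_i\lambda_i r^a_i$ lies in $G\cap\overline{a}=G(\overline{a})$ and $v+c+\sum_i\lambda_i r^b_i$ lies in $G(\overline{b})$; summing these two elements yields $g+\sum_i\lambda_i(r^b_i-r^a_i)$, so $g\in G(\overline{a})+G(\overline{b})+\vect{r^a-r^b}$. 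The reverse inclusion is automatic since each $r^a_i-r^b_i\in G\cap(\overline{a}+\overline{b})=G(\overline{a}+\overline{b})$.

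For $(2)\Rightarrow(1)$, I would pick $\alpha\in\pi(\overline{a})\cap\pi(\overline{b})$ with representatives $u\in\overline{a},v\in\overline{b}$ satisfying $\pi(u)=\pi(v)=\alpha$. Then $u-v\in G\cap(\overline{a}+\overline{b})=G(\overline{a}+\overline{b})$, and by $(2)$ we can write $u-v=g_a+g_b+\sum_i\lambda_i(r^a_i-r^b_i)$ with $g_a\in G(\overline{a})$, $g_b\in G(\overline{b})$ and $\lambda$ almost trivial. Rearranging,
\[
u-g_a-\sum_i\lambda_i r^a_i \;=\; v+g_b-\sum_i\lambda_i r^b_i
\]
belongs to $\overline{a}\cap\overline{b}=\overline{C}$, so equals some $c\in\overline{C}$. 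Applying $\pi$ gives $\alpha=\pi(u)=\pi(c)+\sum_i\lambda_i\gamma_i\in\vect{\pi(\overline{C})\gamma}$, proving one inclusion in $(1)$; the reverse inclusion is immediate from $\gamma\subseteq\pi(\overline{a})\cap\pi(\overline{b})$ and $\overline{C}\subseteq\overline{a}\cap\overline{b}$.

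The argument is essentially bookkeeping, with the only real subtleties being the well-definedness up to the choice of representatives and the care needed when $\gamma$ is infinite (which forces the use of almost trivial linear combinations). The hypothesis $\overline{a}\cap\overline{b}=\overline{C}$ is used crucially in both directions to collapse field-theoretic residues back into $\overline{C}$, and the existence of representatives $r^a,r^b$ is guaranteed by the assumption $\gamma\subseteq\pi(\overline{a})\cap\pi(\overline{b})$.
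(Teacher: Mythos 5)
Your proof is correct and follows essentially the same route as the paper's: both directions proceed by decomposing an element of $G(\overline{a}+\overline{b})$ (resp.\ a common value of $\pi$ on $\overline{a}$ and $\overline{b}$) using the hypothesis, and then using $\overline{a}\cap\overline{b}=\overline{C}$ to push the residue into $\overline{C}$. Your preliminary remark on independence from the choice of representatives is a welcome addition that the paper leaves implicit.
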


\begin{proof}
  \textit{(1)} implies \textit{(2)}. Let $u_a\in \overline{a}$ and $u_b\in \overline{b}$ such that $u_a-u_b \in G$. Then $\pi(u_a)=\pi(u_b) \in \pi(\overline{C})+\vect{\gamma}$ so there exists $u_c\in \overline{C}$ and $\lambda\in \F_p^{\abs{\gamma}}$ such that for some (any) representatives $r^a$ and $r^b$ of $\gamma$ in $\overline{a}$ and $\overline{b}$ respectively, there exists $g_a\in G(\overline{a})$, $g_b\in G(\overline{b})$ and an almost trivial sequence $\lambda\in \F_p^{\abs{\gamma}}$ with 
\begin{align*}
u_a = g_a + u_c +\sum_i \lambda_i r^a_i&\\
u_b = g_b + u_c +\sum_i \lambda_i r^b_i.&
\end{align*}
It follows that $u_a-u_b \in G(\overline{a})+G(\overline{b})+\vect{r^a - r^b}$. \\
\textit{(2)} implies \textit{(1)}. If $u_a\in \overline{a}$ and $u_b\in \overline{b}$ are such that $\pi(u_a) = \pi(u_b)$, then $u_a-u_b \in G(\overline{a}+\overline{b})$ hence $u_a-u_b = g_a+g_b + \sum_i \lambda_i (r^a_i - r^b_i)$ (for an almost trivial sequence $\lambda\in \F_p^{\abs{\gamma}}$). It follows that $u_a-g_a - \sum_i \lambda_i r^a_i\in \overline{a}\cap \overline{b} = \overline{c}$, so $\pi(u_a) \in \pi(\overline{C}) +\vect{\gamma}$. \end{proof}

\begin{lm}[Maximal representative]\label{lm_max}
  Let $\gamma$ be a tuple $\F_p$-independent over $\pi(\overline{C})$ and $d$ a tuple from $K$ such that $\pi(d)=\gamma$. Then there exists $(K',G')\succ (K,G)$ and a tuple $r_\gamma$ of length $\abs{\gamma}$ in $K'$, algebraically independent over $K$ such that 
$$G(\overline{Kr_\gamma}) = G(K)+\vect{r_\gamma-d}.$$
Furthermore the following hold for all tuples $a,b$ from $K$ containing $C$ such that $\gamma\in \pi(\overline{a})\cap \pi(\overline{b})$:
\begin{enumerate}
\item if $C = \overline{C}$ then $a\equiv_{C\gamma} b$ if and only if $a\equiv_{\overline{Cr_\gamma}} b$;
\item  $a\indi w _{C\gamma} b$ if and only if $a\indi w _{Cr_\gamma} b$.
\end{enumerate}
\end{lm}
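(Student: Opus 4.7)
The plan is to construct $r_\gamma$ as a realization of an appropriate type in a suitable elementary extension, then verify Properties~(1) and~(2) by extending $\LL_G$-isomorphisms and by computing $G'$ explicitly on algebraic closures containing $r_\gamma$.

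For existence, take a tuple $x$ of length $\abs{\gamma}$ algebraically independent over $K$ and consider the additive subgroup $H = G(K) + \vect{x_i - d_i : i\le \abs{\gamma}}$ of $\overline{Kx}$. The algebraic independence of $x$ over $K$ yields $H \cap K = G(K)$: an element $g + \sum_i \lambda_i(x_i - d_i) \in K$ with $g \in G(K)$ forces $\sum_i \lambda_i x_i \in K$, hence $\lambda_i=0$ for all $i$. By Subsection~\ref{sub_tp}, the pair $(\overline{Kx}, H)$ defines a consistent type over $K$, realized in some $(K',G') \succ (K,G)$ by a tuple $r_\gamma$ with $G'(\overline{Kr_\gamma}) = G(K) + \vect{r_\gamma - d}$ and $r_\gamma$ algebraically independent over $K$; in particular $\pi(r_\gamma) = \gamma$. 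For any tuple $a\supseteq C$ with $\gamma \in \pi(\overline{Ca})$ and any representative $r^a \in \overline{Ca}$ of $\gamma$, the transcendence of $r_\gamma$ over $K$ combined with $r^a - d \in G(K)$ yields $G'(\overline{\overline{Ca}\, r_\gamma}) = G(\overline{Ca}) + \vect{r_\gamma - r^a}$.

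For Property~(1), $(\Leftarrow)$ is immediate, since any automorphism fixing $r_\gamma$ fixes $\pi(r_\gamma) = \gamma$. For $(\Rightarrow)$, assume $a\equiv_{C\gamma} b$; the description of types in Subsection~\ref{sub_tp} provides an $\LL_G$-isomorphism $\sigma : \overline{Ca} \to \overline{Cb}$ over $C$ with $\sigma(a) = b$, whose induced map on $K/G$ fixes $\gamma$; in particular $\sigma(r^a) - r^b \in G(\overline{Cb})$. Extend $\sigma$ to a field isomorphism $\tilde\sigma : \overline{\overline{Ca}\, r_\gamma} \to \overline{\overline{Cb}\, r_\gamma}$ which restricts to the identity on $\overline{Cr_\gamma}$, using linear disjointness of $\overline{Ca}$ and $\overline{Cr_\gamma}$ over $C=\overline{C}$ (from $r_\gamma\indi{\ACF}_C K$). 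By the formula of the preceding paragraph it suffices to check $\tilde\sigma(r_\gamma - r^a) = r_\gamma - \sigma(r^a) \in \vect{r_\gamma - r^b} + G(\overline{Cb})$, which reduces to $\sigma(r^a) - r^b \in G(\overline{Cb})$.

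For Property~(2), the $\ACF$-parts on the two sides are equivalent because $r_\gamma \indi{\ACF}_C Kab$. Using Lemma~\ref{lm_ind*}, $a\indi w _{C\gamma} b$ amounts to $G(\overline{Ca}+\overline{Cb}) = G(\overline{Ca}) + G(\overline{Cb}) + \vect{r^a - r^b}$, and by the formula above $a\indi w _{Cr_\gamma} b$ amounts to $G'(\overline{Car_\gamma}+\overline{Cbr_\gamma}) = G(\overline{Ca}) + G(\overline{Cb}) + \vect{r_\gamma - r^a} + \vect{r_\gamma - r^b}$. Direction $(\Leftarrow)$ is a direct expansion: any element of the left-hand side writes as $g_a + g_b + \sum_i \lambda_i(r_{\gamma,i} - r^a_i) + \sum_i \mu_i(r_{\gamma,i} - r^b_i)$, and $\F_p$-independence of $r_\gamma$ over $K$ forces $\mu_i = -\lambda_i$, producing the required $\vect{r^a - r^b}$-term. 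For $(\Rightarrow)$, writing $u \in G'(\overline{Car_\gamma}+\overline{Cbr_\gamma})$ as $g + \sum_i \lambda_i(r_{\gamma,i} - r^a_i)$ with $g \in G$ reduces the task to showing that $g \in G(\overline{Ca}+\overline{Cb})$, which amounts to the identity $K \cap (\overline{Car_\gamma} + \overline{Cbr_\gamma}) = \overline{Ca} + \overline{Cb}$. This is the main obstacle: it follows from $\overline{Ca} \cap \overline{Cb} = \overline{C}$ and $r_\gamma \indi{\ACF}_C Kab$ via a specialization argument — applying automorphisms of $\overline{Kr_\gamma}/K$ that move $r_\gamma$ to another generic tuple shifts $v_a$ by an element of $\overline{Ca}\cap \overline{Cb} = \overline{C}$, forcing $v_a \in \overline{Ca}$ and $v_b \in \overline{Cb}$ in any decomposition $v_a+v_b = w \in K$.
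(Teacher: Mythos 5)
Your construction of $r_\gamma$, the identity $G'(\overline{\overline{Ca}\,r_\gamma}) = G(\overline{Ca})+\vect{r_\gamma-r^a}$, the proof of part (1) by extending the $\LL_G$-isomorphism via stationarity of $\ACF$-types over $C=\overline{C}$, and the direction of (2) going from $a\indi w _{Cr_\gamma} b$ to $a\indi w _{C\gamma} b$ (coefficient cancellation using $\F_p$-linear independence of $r_\gamma$ over $K$) all match the paper's proof and are correct.

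The gap is exactly at the point you flag as ``the main obstacle'': the identity $K\cap(\overline{Car_\gamma}+\overline{Cbr_\gamma}) = \overline{Ca}+\overline{Cb}$. This is a genuinely nontrivial statement about sums of algebraically closed fields in an independent configuration, and the paper does not prove it here --- it imports it from~\cite[Lemma 5.16]{dE18A} applied with $T=\ACF$. Your specialization argument does not establish it. If $\tau$ is an automorphism over $K$ sending $r_\gamma$ to another generic $r_\gamma'$, then from $v_a+v_b=w=\tau(v_a)+\tau(v_b)$ you get $v_a-\tau(v_a)=\tau(v_b)-v_b$, but this common value lies in $\overline{Car_\gamma r_\gamma'}\cap\overline{Cbr_\gamma r_\gamma'}$, which under the available independences is $\overline{Cr_\gamma r_\gamma'}$, not $\overline{Ca}\cap\overline{Cb}=\overline{C}$ as you assert; knowing the shift lies in $\overline{Cr_\gamma r_\gamma'}$ does not let you conclude. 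Worse, the conclusion you state --- that \emph{any} decomposition $w=v_a+v_b$ has $v_a\in\overline{Ca}$ and $v_b\in\overline{Cb}$ --- is false: for any $s\in\overline{Cr_\gamma}\subseteq\overline{Car_\gamma}\cap\overline{Cbr_\gamma}$, the pair $(v_a+s,\,v_b-s)$ is another decomposition of $w$, and taking $s=r_{\gamma,1}$ moves $v_a$ out of $K$ entirely. At best one could hope to show $v_a\in\overline{Ca}+\overline{Cr_\gamma}$ and $v_b\in\overline{Cb}+\overline{Cr_\gamma}$ with cancelling $\overline{Cr_\gamma}$-parts, but that is a statement of the same difficulty as the one you are trying to prove. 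You should either invoke \cite[Lemma 5.16]{dE18A} at this step, as the paper does, or supply an actual proof of the sum-intersection identity.
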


\begin{proof}
  Let $x$ be an algebraically independent tuple over $K$ of size $\abs{d}$, and define $H$ on $K(x)$ to be $G(K) + \vect{x-d}$. It is easy to see that $(\overline{K(x)}, H)$ defines a consistent type over $K$ so let $r_\gamma$ be a realization of this type in an elementary extension $(K',G')$ of $(K,G)$. We may assume that $(K',G')$ is $\kappa$-saturated and $\kappa$-homogeneous for some big enough $\kappa$.

\begin{claim} if $C = \overline{C}$ and $r_\gamma' \equiv_{C\gamma} r_\gamma$ with $r_\gamma'\indi{\ACF}_C\quad  b$ and $G(\overline{Cbr_\gamma '})= G(\overline{b})+\vect{r_\gamma' - r^b}$ for some $r^b\in \pi^{-1}(\gamma)\cap \overline{b}^{\abs{\gamma}}$, then any $\LL^G$-isomorphism over $C\gamma$ that sends an enumeration $R_\gamma'$ of $\overline{Cr_\gamma'}$ to an enumeration $R_\gamma$ of $\overline{Cr_\gamma}$ (and sends $r_\gamma'$ to $r_\gamma$) extends to an $\LL^G$-isomorphism between $\overline{R_\gamma' b}$ and $\overline{R_\gamma b}$ which fixes $b$.
\end{claim}

\begin{proof}[Proof of the Claim]. Let $\sigma$ be an automorphism of $(K',K'/G')$ over $C\gamma$ sending $r_\gamma'$ to $r_\gamma$. Then it sends any enumeration $R_\gamma'$ of $\overline{Cr_\gamma'}$ to an enumeration $R_\gamma$ of $\overline{Cr_\gamma}$. We may assume that $b=\overline{b}$. By stationarity of the type $tp^{\ACF}(b/C)$, the field isomorphism $\sigma\upharpoonright\overline{CR_\gamma'}$ extends to $\tilde{\sigma} : \overline{bR_\gamma'}\rightarrow \overline{bR_\gamma}$ with $\tilde{\sigma}$ fixing $b$. 
We show that $\tilde{\sigma}$ is an $\LL^G$-isomorphism. 
First observe that since $G(\overline{Kr_\gamma})= G(K)+\vect{r_\gamma-r^b}$ then $G(\overline{br_\gamma})= G(b)+\vect{r_\gamma-r^b}$. As $\tilde{\sigma}$ fixes $b$ and sends $r_\gamma'$ to $r_\gamma$ it is clear that $\tilde{\sigma}$ send $G(\overline{br_\gamma'})$ to $G(\overline{br_\gamma})$ so $\tilde{\sigma}$ is an $\LL^G$-isomorphism. 
Now this isomorphism extends to an automorphism of $(K',G')$ and an automorphism of $(K',K'/G')$ that fixes $\gamma$ as it send $r_\gamma'$ to $r_\gamma$. 
\end{proof}

\textit{(1)}. Assume that $a\equiv_{C\gamma} b$ and let $\sigma$ be an automorphism of $(K',K'/G')$ over $C\gamma$ sending $a$ on $b$. As before, we have that $G(\overline{ar_\gamma })= G(\overline{a})+\vect{r_\gamma - r^a}$ and $G(\overline{br_\gamma })= G(\overline{b})+\vect{r_\gamma - r^b}$, for some (any) representatives $r^a, r^b$ of $\gamma$ in $\overline{a}$, $\overline{b}$ respectively. Let $R_\gamma$ be an enumeration of $\overline{Cr_\gamma}$ and $R_\gamma ' = \sigma(R_\gamma)$, $r_\gamma' = \sigma(r_\gamma)$. As $r_\gamma \indi{\ACF}_C \quad a$, we have $r_\gamma' \indi{\ACF}_C \quad b$. Furthermore $G(\overline{ar_\gamma })= G(\overline{a})+\vect{r_\gamma - r^a}$ and $aR_\gamma \equiv_{C\gamma} bR_\gamma'$, then $G(\overline{Cbr_\gamma '})= G(\overline{b})+\vect{r_\gamma' - r^b}$. By the claim, $\sigma^{-1}\upharpoonright \overline{Cr_\gamma'}$ extends $\overline{Cr_\gamma'b}$ with the identity on $b$ hence $R_\gamma \equiv_{Cb\gamma} R_\gamma'$. It follows that $aR_\gamma \equiv_{C\gamma} bR_\gamma$. The other direction is trivial.

\textit{(2)}. From left to right. It is clear that $a\indi{\ACF}_{Cr_\gamma} \quad b$. We want to show that $G(\overline{ar_\gamma }+\overline{br_\gamma}) = G(\overline{ar_\gamma})+G(\overline{br_\gamma})$. Observe that $G(\overline{abr_\gamma}) = G(\overline{ab})+\vect{r^a-r_\gamma}$ for any tuple $r^a$ from $\overline{a}$ with $\pi(r^a)$. 
Let $u\in \overline{ar_\gamma}$ and $v\in \overline{br_\gamma}$. If $u+v\in G$ there exists $g_{ab}\in G(\overline{ab})$ and $\lambda\in \F_p^{\abs{x}}$ such that $u+v = g_{ab} + \sum_i\lambda_i(r^a_i - r_{\gamma i})$ for an almost trivial tuple $\lambda$. 
It follows that $g_{ab} \in (\overline{ar_\gamma} + \overline{br_\gamma})\cap \overline{ab} = \overline{a}+\overline{b}$, using ~\cite[Lemma 5.16.]{dE18A} with $T=\ACF$. As $a\indi w _{C\gamma} b$ and using Lemma~\ref{lm_ind*}, we have that $G(\overline{a}+\overline{b}) = G(\overline{a})+G(\overline{b})+\vect{r^a - r^b}$. We deduce that $g_{ab} = g_a + g_b + \sum_i \mu_i(r^a_i - r^b_i)$, for an almost trivial tuple $\mu$. For all $i$, $r^a_i-r_{\gamma i}\in G(\overline{a r_\gamma})$ and $r_{\gamma i}-r^b_i \in G(\overline{br_\gamma})$ hence $g_{ab}= g_a + g_b + \sum_i \mu_i(r^a_i - r_{\gamma i}) + \sum_i \mu_i(r_{\gamma i} - r^b_i)\in G(\overline{ar_\gamma})+G(\overline{br_\gamma})$. It follows that $u+v \in G(\overline{ar_\gamma})+G(\overline{br_\gamma})$. The other inclusion being trivial we have $G(\overline{ar_\gamma }+\overline{br_\gamma}) = G(\overline{ar_\gamma})+G(\overline{br_\gamma})$. 

From right to left. First, $r_\gamma\indi{\ACF}_C\quad b$ hence by \ref{TRA} and \ref{MON} $a\indi{\ACF}_C\quad b$. By hypothesis, $G(\overline{ar_\gamma }+\overline{br_\gamma}) = G(\overline{ar_\gamma})+G(\overline{br_\gamma})$. Furthermore $G(\overline{ar_\gamma}) = G(\overline a)+\vect{r_\gamma - r^a}$ and $G(\overline{br_\gamma}) = G(\overline b)+\vect{r_\gamma - r^b}$. It is easy to see that $$(G(\overline a)+ G(\overline b)+\vect{r_\gamma - r^a}+\vect{r_\gamma - r^b})\cap (\overline{a}+\overline{b}) = G(\overline a)+ G(\overline b)+\vect{r^b - r^a}.$$
It follows that $a\indi w _{C\gamma} b$.
\end{proof}

  \begin{rk}
  Let $\indi{ST}\ $ be the following relation, defined for $\gamma\in \pi(\overline{Ca})\cap \pi(\overline{Cb})$:
  \begin{align*}
  a \indi{ST}_{C\gamma} \ \ b \iff &a\indi{\ACF}_C \quad b\text{ and }G(\overline{Cab}) = G(\overline{Ca})+G(\overline{Cb})+\vect{r^a_\gamma - r^b_\gamma} \\
                                    &\text{ for some (any) representatives $r^a_\gamma$, $r^b_\gamma$ of $\gamma$ in $\overline{Ca}$, $\overline{Cb}$ respectively}.
  \end{align*}
  A \emph{maximal representative of $\gamma$ over $C$ with respect to $b$} is a representative $r_\gamma$ such that $r_\gamma \indi{ST}_{C\gamma} \ \ b$. The previous result implies that this relation satisfies \ref{EXT} and \ref{STAT} over algebraically closed sets. This relation clearly extends the strong independence in $(K,G)$.
  \end{rk}
  
  \begin{thm}\label{thm_indquo}
  The relation $\indi w$ satisfies the following properties.
  \begin{enumerate}
  \item (Full Existence) Let $a,b,C=\overline{C}$ in $K$ and $\gamma\in K/G$ such that $\gamma\in \pi(\overline{Ca})\cap \pi(\overline{Cb})$ and $\gamma$ $\F_p$-independent over $\pi(\overline{C})$. Then there exists $a'\equiv_{C\gamma} a$ such that $a'\indi w _{C\gamma} b$.
  \item (Transitivity) If $a\alpha\indi{w}_{C\gamma} b\beta$ and $a\alpha \indi w _{Cb\gamma\beta} d\delta$ then $a\alpha\indi w_{C\gamma} bd\beta\delta$
  \item (Independence theorem) Let $c_1,c_2,a,b,C=\overline C$ in $K$ and $\gamma\in K/G$ such that $\gamma\in \pi(\overline{Ca})\cap \pi(\overline{Cb})\cap \pi(\overline{Cc_1})\cap \pi(\overline{Cc_2})$ and $\gamma$ $\F_p$-independent over $\pi(\overline{C})$. 
  
  If $c_1\equiv_{C\gamma} c_2$ and $c_1\indi w _{C\gamma} a, c_2\indi w _{C\gamma} b, a\indi{\ACF}_C \quad b$, then there exists $c$ such that $c\equiv_{Ca\gamma} c_1$, $c \equiv_{Cb\gamma} c_2$ and $c\indi{w}_{C\gamma} a,b$. 
  \end{enumerate}
  \end{thm}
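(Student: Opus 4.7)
The unifying strategy for all three parts is to pass to a generic maximal representative of $\gamma$, chosen $\ACF$-independent over every tuple in sight, and then invoke Lemma~\ref{lm_max} to convert statements of the form $\cdot\indi{w}_{C\gamma}\cdot$ and $\cdot\equiv_{C\gamma}\cdot$ in $(K,K/G)$ into the corresponding statements $\cdot\indi{w}_{\overline{Cr_\gamma}}\cdot$ and $\cdot\equiv_{\overline{Cr_\gamma}}\cdot$ in $(K,G)$ over the algebraically closed base $\overline{Cr_\gamma}$. At that point the desired properties become instances of the properties of $\indi{w}$ already proved over algebraically closed sets in Theorem~\ref{thm_indw}.

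For \emph{Full Existence}, let $r_\gamma$ be a maximal representative of $\gamma$ over $\overline C$ with $r_\gamma\indi{\ACF}_C ab$ (this exists in a suitable elementary extension, by Lemma~\ref{lm_max}). By \ref{EXT} of $\indi{w}$ over algebraically closed sets in $(K,G)$ (Theorem~\ref{thm_indw}), pick $a'\equiv_{\overline{Cr_\gamma}} a$ with $a'\indi{w}_{\overline{Cr_\gamma}} b$. Lemma~\ref{lm_max}(1) then gives $a'\equiv_{C\gamma}a$ and Lemma~\ref{lm_max}(2) gives $a'\indi{w}_{C\gamma}b$, as required.

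For \emph{Transitivity}, choose $r_\gamma$ maximal over $\overline C$ with $r_\gamma\indi{\ACF}_C abd$. Since $\beta\subseteq\pi(\overline{Cb})$ already has a representative inside $\overline{Cb}$, one may use $r_\gamma$ itself as the maximal part of a representative of $\gamma\beta$ over $\overline{Cb}$, so $\overline{Cb}$-level maximality reduces to the same $r_\gamma$. Applying Lemma~\ref{lm_max}(2) to both hypotheses (absorbing $\alpha,\beta,\delta$, which all lie in the relevant $\pi(\overline{\cdot})$'s) yields $a\indi{w}_{\overline{Cr_\gamma}} b$ and $a\indi{w}_{\overline{Cbr_\gamma}} d$ in $(K,G)$. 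Using \ref{SYM} and \ref{TRA} of $\indi{w}$ from Theorem~\ref{thm_indw} we deduce $bd\indi{w}_{\overline{Cr_\gamma}} a$ and hence $a\indi{w}_{\overline{Cr_\gamma}}bd$. Translating back via Lemma~\ref{lm_max}(2) gives $a\alpha\indi{w}_{C\gamma}bd\beta\delta$.

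For the \emph{Independence Theorem}, pick $r_\gamma$ generic over $Cabc_1c_2$ so that it is simultaneously a maximal representative of $\gamma$ over $\overline C$, $\overline{Ca}$ and $\overline{Cb}$. By Lemma~\ref{lm_max} the hypotheses become $c_1\equiv_{\overline{Cr_\gamma}}c_2$, $c_1\indi{w}_{\overline{Cr_\gamma}}a$ and $c_2\indi{w}_{\overline{Cr_\gamma}}b$, while the genericity of $r_\gamma$ promotes $a\indi{\ACF}_C b$ to $a\indi{\ACF}_{\overline{Cr_\gamma}}b$, and \emph{a fortiori} $a\indi{a}_{\overline{Cr_\gamma}}b$. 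Applying $\indi{a}$-\ref{AM} of $\indi{w}$ over algebraically closed sets (Theorem~\ref{thm_indw}) produces $c$ with $c\equiv_{\overline{Cr_\gamma}a}c_1$, $c\equiv_{\overline{Cr_\gamma}b}c_2$ and $c\indi{w}_{\overline{Cr_\gamma}}a,b$. Lemma~\ref{lm_max}(1) applied with bases $\overline{Ca}$ and $\overline{Cb}$ (legitimate since $r_\gamma$ is maximal over each of them) then gives $c\equiv_{Ca\gamma}c_1$ and $c\equiv_{Cb\gamma}c_2$, and Lemma~\ref{lm_max}(2) translates the amalgamated independence back to $c\indi{w}_{C\gamma}a,b$.

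The principal technical obstacle is verifying that a single $r_\gamma$, chosen $\ACF$-generic over everything, can legitimately serve simultaneously as the maximal representative needed for each invocation of Lemma~\ref{lm_max}, in particular over the different bases $\overline C$, $\overline{Ca}$, $\overline{Cb}$ and $\overline{Cbd}$ in parts (2) and (3); this reduces to the fact that if $r_\gamma$ satisfies $G(\overline{Kr_\gamma})=G(K)+\vect{r_\gamma-d}$ then the analogous equality holds after restricting $K$ to any of these intermediate algebraically closed subfields, which is a direct consequence of modularity together with $r_\gamma\indi{\ACF}_C K$. The secondary bookkeeping point is that in part (2) the tuples $\alpha,\beta,\delta$ already lie in the relevant $\pi(\overline{\cdot})$'s, so they play no active role once $\gamma$ has been absorbed into the base.
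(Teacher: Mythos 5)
For \emph{Full Existence} and the \emph{Independence Theorem} your argument is essentially the paper's own: introduce a maximal representative $r_\gamma$ of $\gamma$ over $\overline C$, generic in the sense of $\ACF$ over all the parameters, use Lemma~\ref{lm_max} to translate the hypotheses into statements over the algebraically closed base $\overline{Cr_\gamma}$ in $(K,G)$, apply \ref{EXT} (resp.\ $\indi a$-\ref{AM}) of $\indi w$ from Theorem~\ref{thm_indw}, and translate back. The final transfers $c\equiv_{\overline{Cr_\gamma}a}c_1\Rightarrow c\equiv_{Ca\gamma}c_1$ are the trivial direction (any automorphism fixing $r_\gamma$ fixes $\gamma=\pi(r_\gamma)$), so you do not actually need $r_\gamma$ to be a ``maximal representative over $\overline{Ca}$'' in the sense of the lemma there. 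These two parts are fine.

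For \emph{Transitivity} there is a gap in your route. Lemma~\ref{lm_max} is only available when the tuple $\gamma$ is $\F_p$-independent over $\pi$ of the base; with base $\overline{Cb}$ this hypothesis fails, since under the standing restriction $\gamma\beta\subseteq\pi(\overline{Cb})$. The second hypothesis $a\alpha\indi w_{Cb\gamma\beta}d\delta$ therefore does not translate via Lemma~\ref{lm_max} to $a\indi w_{\overline{Cbr_\gamma}}d$; what it gives directly (absorbing $\gamma\beta$ into $\acl$ of the base) is $a\indi w_{\overline{Cb}}d$ in $(K,G)$. Passing from there to $a\indi w_{\overline{Cbr_\gamma}}d$ is a change of base by the new element $r_\gamma$, which is not an instance of any property you have established ($\indi w$ does not satisfy \ref{BMON}, and $r_\gamma$ is not even in $K$); it would require redoing the computation of Lemma~\ref{lm_max}(2) in the degenerate case $\gamma\subseteq\pi(\overline{Cb})$, which you do not do. The irony is that this detour is unnecessary: the paper proves Transitivity by direct verification from the definition. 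Under the restriction $\alpha\subseteq\pi(\overline{Ca})$, $\beta\subseteq\pi(\overline{Cb})$, $\delta\subseteq\pi(\overline{Cd})$, the two hypotheses read $\pi(\overline{Ca})\cap\pi(\overline{Cb})=\vect{\pi(\overline C)\gamma}$ and $\pi(\overline{Cab})\cap\pi(\overline{Cbd})=\pi(\overline{Cb})$, together with $a\indi{\ACF}_C b$ and $a\indi{\ACF}_{Cb}d$; then $\pi(\overline{Ca})\cap\pi(\overline{Cbd})\subseteq\pi(\overline{Cab})\cap\pi(\overline{Cbd})=\pi(\overline{Cb})$, so intersecting with $\pi(\overline{Ca})$ gives $\vect{\pi(\overline C)\gamma}$, and the $\ACF$ part is transitivity of $\indi{\ACF}$. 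Replace your transitivity paragraph with this computation.
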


  \begin{proof}
  \textit{Transitivity} is just checking from the definition of $\indi w$. For \textit{Full Existence}, assume the hypothesis and let $r_\gamma$ be a maximal representative as in Lemma~\ref{lm_max}. By \ref{EXT} of $\indi w$ in $(K,G)$ there exists $a'\equiv_{Cr_\gamma} a$ such that $a' \indi{w}_{Cr_\gamma} b$. Using again Lemma~\ref{lm_max}, $a'\equiv_{C\gamma} a$ and $a'\indi w _{C\gamma} b$. 
  For \textit{Independence theorem}, we use the same strategy. Assume the hypothesis and let $r_\gamma$ be a maximal representative of $\gamma$ as in Lemma~\ref{lm_max}. From Lemma~\ref{lm_max}, we have that  $c_1\equiv_{\overline{Cr_\gamma}} c_2$ and $c_1\indi w _{\overline{Cr_\gamma}} a$, $c_2\indi w _{\overline{Cr_\gamma}} b$, $a\indi{\ACF}_{\overline{Cr_\gamma}} b$. As $\indi w$ in $(K,G)$ satisfies $\indi{a}$-\ref{AM} over algebraically closed sets there exists $c$ such that $c\equiv_{\overline{Cr_\gamma}a} c_1$, $c \equiv_{\overline{Cr_\gamma}b} c_2$ and $c\indi{w}_{\overline{Cr_\gamma}} a,b$. It follows that $c\equiv_{Ca\gamma } c_1$, $c \equiv_{Cb\gamma } c_2$, and by Lemma~\ref{lm_max}, $c\indi{w}_{C \gamma} a,b$.
  \end{proof}
  
  \begin{rk}
  Notice that $\indi w$ satisfies $\indi a$-amalgamation over algebraically closed fields in $(K,G)$. In Theorem~\ref{thm_indquo}, we can weaken the hypothesis $a\indi{\ACF}_C \quad b$ to $a\indi a _C b$ because if $a\indi{a}_C b $ and $r\indi{\ACF}_C \quad ab$, then $a\indi a_{Cr} b$ (this result is contained in the proof of Lemma~\ref{mixed_tran}).
  \end{rk}

  \subsection{Weak elimination of imaginaries in $(K,K/G)$}
  
  The following Lemma is a rewriting of the classical argument for the proof of elimination of imaginaries that appears for instance in~\cite{CP98} and~\cite{KrR18}. It is similar to~\cite[Proposition 4.25]{CKr17}, the only difference being that in our case, $\ind$ is defined only on some subsets, and the base set might contain imaginaries, but the proof is the same.

  \begin{lm}\label{lm_critim}
  Let $\MM$ be a $\kappa$-homogeneous and $\kappa$-saturated structure. Let $E\subset \MM^{eq}$. Assume that there exists a binary relation $\ind_E$ on some tuples from $\MM$ such that
  \begin{itemize}
  \item (Invariance) If $a\ind_E b$ and $ab\equiv_E a'b'$ then $a'\ind_E b'$
  \item (Extension) If $a\ind_E b$ and $d$ tuple from $\MM$ then there exists $a'\equiv_{Eb}a$ and $a'\ind_E bd$
  \item (Independent consistency) If $a_1\ind_E a_2$, $b\ind_E a_2$ and $a_2\equiv_E b$, then there exists $a$ such that $a\equiv_{Ea_1} a_2$, $a \equiv_{Ea_2} b$. 
  \end{itemize}
  Let $e\in\MM^{eq}$. If there exists a $0$-definable function $f$ in $\MM^{eq}$ and $a_1,a_2$ in $\MM$ such that $f(a_1) = f(a_2) = e$ and $a_1\ind_E a_2$ then $e\in \dcl^{eq}(E)$.
  \end{lm}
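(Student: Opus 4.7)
The goal is to prove $e \in \dcl^{eq}(E)$, which amounts to showing that every automorphism $\sigma$ of $\MM^{eq}$ fixing $E$ pointwise satisfies $\sigma(e) = e$. Fix such $\sigma$ and set $a_i^{\sigma} := \sigma(a_i)$ for $i = 1, 2$. Then $f(a_i^{\sigma}) = \sigma(e)$ and $a_1^{\sigma} \ind_E a_2^{\sigma}$ by Invariance.

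The plan is to produce a tuple $b$ with $b \equiv_E a_2$, $f(b) = \sigma(e)$, and $b \ind_E a_2$, and then apply Independent consistency. For the construction of $b$, start from $a_2^{\sigma} \ind_E a_1^{\sigma}$ (using symmetry, implicit in the relations to which this lemma is applied) and apply Extension with $d = a_2$: this produces $b \equiv_{Ea_1^{\sigma}} a_2^{\sigma}$ with $b \ind_E a_1^{\sigma} a_2$, whence $b \ind_E a_2$ by monotonicity. Since $\sigma(e) = f(a_1^{\sigma})$ lies in $\dcl^{eq}(a_1^{\sigma})$, the formula $f(x) = \sigma(e)$ is equivalent to a formula over $Ea_1^{\sigma}$, so the equivalence $b \equiv_{Ea_1^{\sigma}} a_2^{\sigma}$ forces $f(b) = \sigma(e)$; moreover $b \equiv_E a_2^{\sigma} \equiv_E a_2$.

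Independent consistency now applies to $a_1 \ind_E a_2$, $b \ind_E a_2$ together with $a_2 \equiv_E b$, delivering $a$ with $a \equiv_{Ea_1} a_2$ and $a \equiv_{Ea_2} b$. Using $\kappa$-homogeneity, pick $\tau_1 \in \mathrm{Aut}(\MM^{eq}/Ea_1)$ sending $a_2$ to $a$; then $f(a) = \tau_1(f(a_2)) = \tau_1(e) = e$ because $\tau_1$ fixes $f(a_1) = e$. Similarly, take $\tau_2 \in \mathrm{Aut}(\MM^{eq}/Ea_2)$ with $\tau_2(b) = a$: then $\tau_2(\sigma(e)) = \tau_2(f(b)) = f(a) = e$, while $\tau_2$ fixes $f(a_2) = e$, giving $\sigma(e) = e$ as required.

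The main subtlety is that the abstract formulation of Extension only modifies the left side of the independence relation, whereas the construction of $b$ needs to modify the right side; this is resolved by invoking symmetry, which is not listed among the abstract axioms but holds for the concrete relation $\indi{w}$ to which the lemma is applied. A secondary point is tracking, via $\kappa$-homogeneity, how the function $f$ transforms under automorphisms realising the type equivalences produced by Independent consistency, and observing that at each stage the relevant parameter of a formula $f(x) = c$ lies in the fixed set of the chosen automorphism.
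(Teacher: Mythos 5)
Your proof is correct and follows essentially the same route as the paper's: transfer $a_1a_2$ by an $E$-automorphism, use Invariance, then Extension to make the new copy independent from $a_2$, then Independent consistency to produce $a$ whose image under $f$ is forced to be both $e$ and (the image of) $\sigma(e)$ — the only difference being that the paper phrases it as a contradiction with a conjugate $e'\neq e$ while you argue directly that every $E$-automorphism fixes $e$. Your closing observation that the stated Extension axiom only extends on the left, so that symmetry (and monotonicity, to pass from $b\ind_E a_1^{\sigma}a_2$ to $b\ind_E a_2$) of the concrete relation is tacitly needed, is a fair point: the paper's own proof glosses over exactly the same step.
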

  
  \begin{proof}
  If $e$ is not in $\dcl^{eq}(E)$, then there exists $e'\neq e$ such that $e'\equiv_E e$. Let $\sigma$ be an automorphism of $\MM^{eq}$ over $E$ sending $e$ on $e'$. Let $b_1b_2 = \sigma(a_1a_2)$. By Invariance, $b_1\ind_E b_2$ and $f(b_1)=f(b_2) = e'$. By Extension there exists $b\equiv_{Eb_1}b_2$ such that $b\ind_E a_2$. By Independent Consistency, there exists $a$ such that $a\equiv_{Ea_1} a_2$, $a \equiv_{Ea_2} b$. From $a\equiv_{Ea_1} a_2$ follows that $f(a) = f(a_1) = e$ and from $a \equiv_{Ea_2} b$ follows that $f(a)\neq e$, a contradiction.
  \end{proof}
  
  \begin{rk}
  Recall that Extension follows from Full Existence, Symmetry and Transitivity. Independent consistency is a consequence of the independence theorem. It follows from Theorem~\ref{thm_indquo} that for all $C=\overline{C}$ and $\gamma$ $\F_p$-independent over $\pi(\overline{C})$, the restriction of $\indi{w}_{C\gamma}$ to tuples $a$ such that $\gamma\in \pi(\overline{Ca})$ satisfies the hypothesis of the previous Lemma.
  \end{rk}
  
  The following classical fact follows from a group theoretic Lemma due to P.M. Neumann~\cite{Neu76}. It appears first in~\cite[Lemma 1.4]{EvaHru93}.
 \begin{fact}\label{neumann}
  Let $\MM$ be a saturated model, $X$ a $0$-definable set, $e\in \MM$, $E = \acl(e)\cap X$ and a tuple $a$ from $X$. Then there is a tuple $b$ from $X$ such that 
  $$a\equiv_{Ee} b \mbox{ and } \acl(Ea)\cap \acl(Eb)\cap X = E.$$
\end{fact}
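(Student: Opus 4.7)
The plan is to realize $b$ as a conjugate of $a$ under the action of $G := \mathrm{Aut}(\MM/Ee)$, expressing the existence of such a $b$ as the consistency of a partial type, and reducing by compactness to Neumann's classical group-theoretic lemma.

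First I would establish the key observation that $\acl(Ee)\cap X = E$: since $\acl(Ee)\subseteq\acl(e)$, we get $\acl(Ee)\cap X \subseteq \acl(e)\cap X = E$, and the reverse inclusion is trivial. Consequently, every element of $Y := \acl(Ea)\cap X$ lying outside $E$ has infinite $G$-orbit in the saturated model $\MM$. The conclusion then amounts to the consistency of the partial type
\[
\Gamma(x) := \mathrm{tp}(a/Ee) \cup \{\, c\notin\acl(Ex) : c\in Y\setminus E\,\},
\]
using the standard reformulation of $c\in\acl(Ex)$ as the countable disjunction of formulas ``$\psi(c,x,\bar e)\wedge \exists^{\leq n}y\,\psi(y,x,\bar e)$'' over $E$-parameters.

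Next, by compactness it suffices to satisfy any finite subtype. Such a fragment has the form $\mathrm{tp}(a/Ee)$ together with finitely many disjunctions $\neg\psi_j(c_j,x,\bar e_j)\vee|\psi_j(\MM,x,\bar e_j)|>n_j$ with $c_j\in Y\setminus E$. Since $\bar e_j\in E$ is fixed by $G$, the cardinality $|\psi_j(\MM,\sigma(a),\bar e_j)|$ is constant over $\sigma\in G$; so indices $j$ where this cardinality already exceeds $n_j$ are handled automatically, and for the others I must arrange $\neg\psi_j(c_j,b,\bar e_j)$. After replacing each $\psi_j$ by $\psi_j(y,x,\bar e_j)\wedge X(y)$, the set $A_j := \psi_j(\MM,a,\bar e_j)$ is a finite subset of $Y$, and the task becomes: find $\sigma\in G$ with $\sigma^{-1}(c_j)\notin A_j$ for every relevant $j$, in which case $b:=\sigma(a)$ works.

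The bad set of $\sigma$ decomposes as the finite union
\[
\bigcup_{j=1}^m\bigcup_{d\in A_j}\{\sigma\in G : \sigma(d)=c_j\},
\]
in which each inner set is either empty (in particular whenever $d\in E$, for then $d$ is $G$-fixed while $c_j\notin E$) or a coset of $\mathrm{Stab}_G(d)$. For the remaining $d\in A_j\setminus E\subseteq Y\setminus E$, the first step guarantees that $\mathrm{Stab}_G(d)$ has infinite index in $G$. Neumann's lemma---no group is covered by finitely many cosets of subgroups of infinite index---supplies $\sigma$ outside the bad set, completing the finite fragment and hence the proof.

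The main obstacle is the correct setup of the reduction: one must unfold ``$c\notin\acl(Ex)$'' as a partial type, ensure by intersecting with $X$ that the finite sets $A_j$ live inside $Y$, and verify that the stabilizers involved do have infinite index---this last point is precisely where the identity $\acl(Ee)\cap X = E$ is used. Once the reduction is in place, Neumann's lemma does the rest.
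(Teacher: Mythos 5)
Your proof is correct and is precisely the classical argument the paper points to (the paper states this as a Fact without proof, citing Neumann and Evans--Hrushovski): unfold non-algebraicity as a partial type, reduce to a finite fragment by compactness/saturation, and apply Neumann's coset-covering lemma, using $\acl(Ee)\cap X=E$ to see that the stabilizers of elements of $\acl(Ea)\cap X$ outside $E$ have infinite index. The details you flag (conjoining $X(y)$ to the algebraic formulas so the finite sets land in $Y$, and discarding the cosets coming from $d\in E$) are handled correctly.
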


\begin{thm}\label{thm_wei}
Let $e\in (K,G)^{eq}$ then there exists a tuple $c\gamma$ from $(K,K/G)$ such that $c\gamma\in \acl^{eq}(e)$ and $e\in \dcl^{eq}(c\gamma)$. It follows that $(K,K/G)$ has weak elimination of imaginaries.
\end{thm}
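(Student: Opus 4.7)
The plan is to apply the abstract elimination criterion Lemma~\ref{lm_critim} with the extended weak independence $\indi w$ of Subsection~\ref{subsec_dual}, combined with Neumann's Lemma (Fact~\ref{neumann}), in the classical pattern recalled also in~\cite{CP98, KrR18}.

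First, I fix a real tuple $a$ in $K$ and a $\emptyset$-definable function $f$ with $f(a) = e$; any $K/G$-component of a tuple representing $e$ can be lifted through $\pi$ to $K$, so there is no loss in taking $a \subseteq K$. Let $X = K \cup K/G$ denote the real sorts, and set $E = \acl^{eq}(e) \cap X$, splitting as $E = C \cup \Delta$ with $C = \acl^{eq}(e) \cap K$ an algebraically closed subfield of $K$ and $\Delta = \acl^{eq}(e) \cap K/G$ an $\F_p$-subspace of $K/G$ containing $\pi(C)$. Pick $\Gamma \subseteq \Delta$ an $\F_p$-basis of $\Delta$ over $\pi(C)$, so that $\Gamma$ is $\F_p$-independent over $\pi(\overline{C}) = \pi(C)$, as required by Theorem~\ref{thm_indquo}, and $\vect{\pi(C)\Gamma} = \Delta$.

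Fact~\ref{neumann} then produces a tuple $b$ in $K$ with $b \equiv_e a$ and $\acl^{eq}(Ea) \cap \acl^{eq}(Eb) \cap X = E$. Using Corollary~\ref{cor_acleq} sort by sort, and noting the inclusions $C \subseteq \overline{a}$ and $\Delta \subseteq \pi(\overline{a})$ coming from $\acl^{eq}(e) \subseteq \acl^{eq}(a)$ together with Corollary~\ref{cor_acleq}, this intersection statement becomes $\overline{Ca} \cap \overline{Cb} = C$ and $\pi(\overline{Ca}) \cap \pi(\overline{Cb}) = \Delta$. The first identity is exactly $a \indi{\ACF}_C b$ (since $\indi a = \indi{\ACF}$ in $\ACF$), and the second, combined with $\vect{\pi(C)\Gamma} = \Delta$, translates into $\vect{\pi(\overline{Ca})\Gamma} \cap \vect{\pi(\overline{Cb})\Gamma} = \vect{\pi(\overline C)\Gamma}$. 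Together, these give $a \indi w_{C\Gamma} b$ in the sense of Subsection~\ref{subsec_dual}.

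It then remains to feed this into Lemma~\ref{lm_critim}. Invariance of $\indi w_{C\Gamma}$ is immediate, Extension follows from Full Existence, Symmetry and Transitivity of $\indi w$ established in Theorem~\ref{thm_indquo}, and Independent Consistency is a direct application of the Independence Theorem in the same statement (the auxiliary $\indi{\ACF}$-premise between the two ``outer'' parties is free from the two given $\indi w$-independences). Lemma~\ref{lm_critim} thus yields $e \in \dcl^{eq}(C \cup \Gamma)$, and enumerating $C \cup \Gamma$ as $c\gamma$ produces the required tuple, with $c\gamma \subseteq \acl^{eq}(e)$ by construction; weak elimination of imaginaries of $(K, K/G)$ follows since $e$ was arbitrary. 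The main technical point is the translation in the previous paragraph: the description of $\acl$ in Corollary~\ref{cor_acleq} is exactly what converts Neumann's algebraic-closure statement into the $\indi w$-definition in the two-sorted structure, and the choice of $\Gamma$ as a basis of $\Delta$ over $\pi(C)$ is what ensures the $\F_p$-independence hypothesis of Theorem~\ref{thm_indquo} applies.
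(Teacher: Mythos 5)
Your overall architecture (Neumann's Lemma, translation via Corollary~\ref{cor_acleq}, then Lemma~\ref{lm_critim} fed by Theorem~\ref{thm_indquo}) is the same as the paper's, but there is a genuine gap at the one step you dismiss as immediate: the claim that $\overline{Ca}\cap\overline{Cb}=C$ ``is exactly $a\indi{\ACF}_C\ b$ (since $\indi a=\indi{\ACF}\ $ in $\ACF$)'' is false. In $\ACF$ the relation $\indi a$ is strictly weaker than forking independence: for instance, if $a$ codes a generic line and $b$ is a generic point on it, then $\overline{\F_p(a)}\cap\overline{\F_p(b)}=\overline{\F_p}$ while $a\nindi{\ACF}\ b$. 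The paper's own Example~\ref{ex_ACFmon} is precisely the statement that one recovers $\indi{\ACF}\ $ only after forcing base monotonicity, i.e.\ ${\indi a}^{m}=\indi{\ACF}\ $, not $\indi a=\indi{\ACF}\ $. Since the two-sorted relation $\indi w_{C\gamma}$ you want to feed into Lemma~\ref{lm_critim} requires $a\indi{\ACF}_C\ b$ by definition, Neumann's Lemma alone does not deliver $a\indi w_{C\gamma} b$, and your argument stalls exactly where the real work begins.

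The paper closes this gap with a nontrivial extra construction that your proposal omits entirely: starting from $a_1=a$, $a_2=b$, it builds a sequence $(a_i)_{i<\omega}$ with $a_na_{n+1}\equiv_{C\gamma}ab$ and $a_{n+1}\indi w_{C\gamma a_n} a_1,\dots,a_{n-1}$ (using Full Existence from Theorem~\ref{thm_indquo}), extracts a $C\gamma$-indiscernible sequence by Ramsey and compactness, and then exploits the fact that this sequence is \emph{totally} indiscernible over $C$ in the sense of $\ACF$: from $a_1\indi{\ACF}_{Ca_2}\ a_3$ one gets $a_1\indi{\ACF}_{Ca_3}\ a_2$, and combined with $\overline{Ca_1}\cap\overline{Ca_2}=\overline C$ and elimination of imaginaries in $\ACF$ this yields the missing $a_1\indi{\ACF}_{C}\ a_2$, hence $a_1\indi w_{C\gamma}a_2$. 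You would either need to reproduce this argument or replace it by a careful verification that the weaker relation ``$a\indi a_C b$ together with $\pi(\overline{Ca})\cap\pi(\overline{Cb})=\vect{\pi(C)\gamma}$'' itself satisfies the hypotheses of Lemma~\ref{lm_critim} (the remark following Theorem~\ref{thm_indquo} suggests the amalgamation part could be weakened in this direction, but Extension would still have to be checked); as written, neither is done.
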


\begin{proof}
We work in $(K,G)^{eq}$, seeing $(K,K/G)$ as a $0$-definable subset. Suppose that $e$ is an imaginary element, there is a tuple $a$ from $K$ and a $0$-definable function $f$ such that $e = f(a)$. We set $C\vect{\pi(C)\gamma} = \acl^{eq}(e)\cap (K,K/G)$. We may assume that $\gamma$ is $\F_p$-linearly independent over $\pi(C)$. 
As $\gamma\subseteq \acl^{eq}(e)\cap K/G \subseteq \acl^{eq}(a)\cap K/G$ we have that $\acl^{eq}(Ca\gamma)\cap (K,K/G) = \overline{Ca}\pi(\overline{Ca})$ and $\gamma\subseteq \pi(\overline{Ca})$. By Fact~\ref{neumann} there exists $b\equiv_{C\gamma e} a$ such that $$\acl^{eq}(Ca\gamma)\cap \acl^{eq}(Cb\gamma) \cap (K,K/G) = C\vect{\pi(C)\gamma}.$$
Again, $\acl^{eq}(Cb\gamma)\cap (K,K/G) = \overline{Cb}\pi(\overline{Cb})$ and $\gamma\subseteq \pi(\overline{Cb})$. Furthermore $f(b) = e$ and $$(\overline{Ca}\pi(\overline{Ca}))\cap (\overline{Cb}\pi(\overline{Cb})) = C\vect{\pi(C)\gamma}.$$

\noindent We construct a sequence $(a_i)_{i<\omega}$ such that $$a_{n+1} \indi w _{C\gamma a_n} a_1,\dots , a_{n-1} \mbox{ and }a_n a_{n+1}\equiv_{C\gamma} ab.$$

Start by $a_1 = a$ and $a_2 = b$. Assume that $a_1,\dots,a_n$ has already been constructed. We have that $a_{n-1} \equiv_{C\gamma} a_n$ so let $\sigma$ be a $c\gamma$-automorphism of the monster such that $\sigma(a_{n-1}) = a_n.$
By Full Existence (Theorem~\ref{thm_indquo}) there exists $a_{n+1} \equiv_{Ca_n\gamma } \sigma(a_n)$ such that $a_{n+1}\indi w _{Ca_n\gamma } a_1,\dots,a_{n-1}.$ It follows that $$a_n a_{n+1} \equiv_{C\gamma} a_n\sigma(a_n)\equiv_{C\gamma} a_{n-1}a_n.$$ 
Let $(a_i)_{i<\omega}$ be such a sequence.
In particular the following holds for all $i<j<k$ 
$$a_k \indi{\ACF}_{Ca_j}\quad  a_i \text{, $\ol{Ca_i}\cap \ol{Ca_j} = \ol{C}$} \text{ and } \pi(\overline{Ca_i})\cap \pi(\overline{Ca_j}) = \vect{\pi(C)\gamma}.$$
By Ramsey and compactness we may assume that $(a_i)_{i<\omega}$ is indiscernible over $C\gamma$. As the three properties above holds for the whole sequence, it is in the Erenfeucht-Mostowski type of the sequence, and hence is still true for the indiscernible sequence. Note that $f(a_i) = e$.
We have that $(a_i)_{i<\omega}$ is totally indiscernible over $C$ in the sense of $\ACF$ hence $a_1a_2a_3 \equiv^{\ACF}_C a_1a_3a_2$. Furthermore we have
$a_1\indi{\ACF}_{Ca_2} \quad a_3$, hence by \ref{INV} $a_1 \indi{\ACF}_{Ca_3} \quad a_2$.
By elimination of imaginaries in $\ACF$ it follows that $a_1\indi{\ACF}_{C}\quad  a_2 $, since $\ol{Ca}\cap \ol{Cb} = \ol{C}$. As $\pi(\overline{Ca_1})\cap \pi(\overline{Ca_2}) = \vect{\pi(C)\gamma}$, we have that $$a_1\indi w _{C\gamma} a_2.$$
As $f(a_1) = f(a_2) = e$, we deduce from Lemma~\ref{lm_critim} that $e\in \dcl^{eq}(C\gamma)$.
\end{proof}
 
\begin{ex}[$(K,K/G)$ does not eliminate finite imaginaries]\label{ex_notelim}
  The structure on $K/G$ is the one of an $\F_p$-vector space (with twisted algebraic and definable closures, $\acl(\alpha) = \vect{\pi(\overline{\F_p})\alpha}$ and $\dcl(\alpha) = \vect{\pi(\dcl(\F_p))\alpha}$). This follows from Corollaries~\ref{cor_typesinquo} and~\ref{cor_acleq}. Consider the unordered pair $\set{\alpha,\beta}$ for two singletons $\alpha, \beta \in K/G$, linearly independent over $\pi(\overline{\F_p})$. Assume that there exists a tuple $d\gamma$ such that for all automorphism $\sigma$ of $(K,K/G)$ $$\sigma(d\gamma) = d\gamma \iff \sigma(\set{\alpha,\beta}) = \set{\alpha,\beta}.$$
 As $d\gamma$ and $\alpha\beta$ are interalgebraic, we have first that $d\subset \overline{\F_p}$ and hence $\alpha,\beta\in \acl(\gamma)\cap K/G = \dcl(\gamma)\cap K/G = \vect{\gamma}$. As $\alpha,\beta$ are linearly independent over $\acl(\emptyset)$, we have $\alpha\beta\equiv_\emptyset \beta\alpha $ so let $\sigma$ be an automorphism of $(K,K/G)$ sending $\alpha\beta$ on $\beta\alpha$. As $\sigma$ fixes $\gamma$, it fixes $\vect{\gamma}$ hence $\alpha = \beta$, a contradiction.
 \end{ex}

 \begin{ex}[$K\times (K/G)^{eq}$ does not eliminate finite imaginaries]\label{ex_notelimst}
   Let $t$ be a transcendental element over $\F_p$. For convenience, we assume for that $G(\overline{\F_p(t)}) = \overline{\F_p(t)}$ (in a model $(K,G)$ of $\ACFG$ such that $G(\overline{\F_p}) = \overline{\F_p}$). Let $\alpha,\beta\in K/G$ be $\F_p$-independent, and let $e$ be the unordered pair $\set{\sqrt{t}\alpha, -\sqrt{t}\beta}$. We have the following:
 \begin{enumerate}
 \item $\dcl^{eq}(e)\cap K = \dcl(t)$
 \item $\dcl^{eq}(e)\cap (K/G)^{eq} = \dcl^{eq}(\set{\alpha,\beta})\cap (K/G)^{eq}$
 \end{enumerate}
 (1) The right to left inclusion is clear. Let $u\in \dcl^{eq}(e)\cap K$, in particular $u\in \dcl^{eq}(t,\alpha\beta)\cap K\subseteq \acl^{eq}(t,\alpha\beta)\cap K = \overline{\F(t)}$. Assume that $u\notin \dcl(t)$. There exists $u'\neq u $ with $u'\equiv_t u$. Let $\alpha',\beta'$ such that $u'\alpha'\beta'\equiv_t u\alpha\beta$. As $\alpha,\beta$ and $\alpha',\beta'$ are $\F_p$-lineary independent over $\pi(\overline{\F(t,u)})=\pi(\overline{\F(t)})=\set{0}$, we have that $\alpha\beta\equiv_{\overline{\F_p(t)}}\alpha'\beta'$ (Corollary~\ref{cor_typesinquo}). It follows that $u'\equiv_{t,\alpha,\beta} u$ hence $u' \equiv_e u$ so $u\notin \dcl^{eq}(e)$, a contradiction.\\
 (2) The right to left inclusion is clear. Let $\set{\gamma_1,\dots,\gamma_n}$ be an element of $\dcl^{eq}(e)\cap (K/G)^{eq}$. For all $i$, $\gamma_i$ is algebraic over $t\alpha\beta$, by Corollary~\ref{cor_acleq} $\gamma_i\in \vect{\pi(\overline{\F_p(t)}),\alpha,\beta} = \vect{\alpha,\beta}$. It follows that permutations of the set $\set{\sqrt{t}\alpha, -\sqrt{t}\beta}$ that permutes $\set{\gamma_1,\dots,\gamma_n}$ are exactly permutations of the set $\set{\alpha,\beta}$ that permutes $\set{\gamma_1,\dots,\gamma_n}$ hence $\set{\gamma_1,\dots,\gamma_n}\in \dcl^{eq}(\set{\alpha,\beta})$. In fact, such a set $\set{\gamma_1,\dots,\gamma_n}$ is the union of two sets of the same cardinal (possibly intersecting), every element in one set is of the form $\lambda\alpha+\mu\beta$ and has a ``dual'' element $\mu\alpha+\lambda \beta$ in the other set.
 
 If $e$ is interdefinable with an element from $K\times (K/G)^{eq}$, by \textit{(1)} and \textit{(2)}, we may assume that $e\in \dcl^{eq}(t\set{\alpha,\beta})$. By hypothesis $\alpha \beta \equiv_{\overline{\F_p(t)}} \beta\alpha$, hence an automorphism sending $\sqrt{t},-\sqrt{t}\alpha\beta$ to $\sqrt{t},-\sqrt{t}\beta\alpha$ fixes $t\set{\alpha,\beta}$ and moves $e$ to $\set{\sqrt{t}\beta, -\sqrt{t}\alpha}$, hence $e\notin \dcl^{eq}(t\set{\alpha,\beta})$, a contradiction.
\end{ex}

\section{Forking and thorn-forking in $\ACFG$}\label{sec_fork}

In this section, we give a description of forking and thorn-forking in the theory $\ACFG$. We also link these notions with other classical relations or other independence relations encountered in the previous chapters.

\subsection{Forcing base monotonicity and extension}\label{sub_forcing}

 In this subsection, given a ternary relation $\ind$ in an arbitrary theory, we introduce the relations $\indi{m} $ and $\indi *$, following the work of Adler in \cite{A09}. 

\begin{df}[Monotonised]
Let $\ind$ be any ternary relation, we define $\indi{m} $ to be the relation defined by
$$A\indi{m}_C  B \iff \forall D\subseteq CB \ \   A\ind_{CD} BC.$$
We call $\indi{m} $ the \emph{monotonised} of $\ind$.
\end{df}

Note that the relation $\indi M\ $ in~\cite[Section 4]{A09} is the relation ${\indi a}^{m}$ in our context.

\begin{lm}\label{lm_mon}
  The relation $\indi{m} $ satisfies \ref{BMON}. 
  Furthermore, for each of the following point
  \begin{itemize}
  \item \ref{INV}
  \item \ref{MON}
  \item \ref{TRA} 
\end{itemize}
if $\ind$ satisfies it then so does $\indi{m}$.
\end{lm}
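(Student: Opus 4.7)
The plan is to verify each property in turn directly from the definition $A\indi{m}_C B \iff \forall D\subseteq CB\ \ A\ind_{CD}BC$, essentially as book-keeping. None of the arguments should require any deep insight; the main thing to be careful about is which subset to feed into the universal quantifier, and the fact that the right-hand side $BC$ (resp.\ $BDC$, etc.) is enlarged under the bar.

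For \ref{BMON}, assume $A\indi{m}_C BD$. Given an arbitrary $E'\subseteq CDB$, I want $A\ind_{CDE'}BCD$. Setting $E:=DE'\subseteq CBD$ and applying the hypothesis yields $A\ind_{CE}BDC$, which is exactly the desired conclusion since $CE = CDE'$ and $BDC = BCD$.

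For \ref{INV} (assuming $\ind$ is invariant): given $ABC\equiv A'B'C'$ and $A\indi{m}_C B$, take any $D'\subseteq C'B'$; lift via a monster automorphism to $D\subseteq CB$ with $ABCD\equiv A'B'C'D'$, apply the hypothesis to get $A\ind_{CD}BC$, and conclude $A'\ind_{C'D'}B'C'$ by invariance of $\ind$. For \ref{MON} (assuming $\ind$ satisfies \ref{MON}): from $A\indi{m}_C BD$ and any $E\subseteq CB\subseteq CBD$ I get $A\ind_{CE}BDC$, from which \ref{MON} of $\ind$ yields $A\ind_{CE}BC$, hence $A\indi{m}_C B$.

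For \ref{TRA}, assume $A\indi{m}_{CB}D$ and $B\indi{m}_C D$. Given $E\subseteq CD$, the second hypothesis with parameter $E$ provides $B\ind_{CE}DC$, while the first with parameter $E\subseteq CBD$ provides $A\ind_{CBE}DCB$. Applying \ref{TRA} of $\ind$ at base $CE$ (after trimming $B$ from the right side on the first formula, which is the one place a tacit use of \ref{MON} on $\ind$ enters) gives $AB\ind_{CE}DC$, so $AB\indi{m}_C D$ as required. The only mildly delicate point in the whole lemma is this right-hand trimming step; it is harmless in the usual setting where $\ind$ is already known to satisfy \ref{MON}, which is the context in which the lemma will be applied.
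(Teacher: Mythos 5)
Your proof is correct and follows essentially the same route as the paper's: each property is verified directly from the definition by choosing the appropriate subset for the universal quantifier (for \ref{BMON}, absorbing $D$ into that subset; for \ref{TRA}, applying \ref{TRA} of $\ind$ at base $CE$). Your explicit remark that the transitivity step tacitly trims the right-hand side using \ref{MON} of $\ind$ is a fair observation — the paper's own proof glosses over this mismatch of right-hand sides — and it is harmless in every context where the lemma is applied.
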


\begin{proof}
Let $A,B,C,D$ such that $A\ind^{m}_C BD$. Then for all $D'\subseteq \acl(BCD)$ we have that $A\ind_{CD'} B$ so in particular for all $D'\subseteq \acl(BCD)$ containing $D$ we have $A\ind_{CD'} B$ hence for all $D''\subseteq \acl(BCD)$ we have $A\ind_{CDD''} B$ hence $A\ind^{m}_{CD} B$. To prove that \ref{INV} is preserved, note that there exists an isomorphism $\sigma : ABC \rightarrow A'B'C'$ which extends to $\acl(ABC) \rightarrow \acl(A'B'C')$ and so induces an isomorphism $ABCD\rightarrow A'B'C'\sigma(D)$ for all $D\subseteq \acl(BC)$. For \ref{MON}, it is an easy checking. For \ref{TRA} Assume that $B\indi{m}_C  A$ and $A'\indi{m}_{CB}  A$, and take $D\subseteq \acl(AC)$. We have in particular that $B\ind_{CD} A$ and $A'\ind_{CBD}A$ hence using \ref{TRA} of $\ind$ we have $A'B\ind_{CD} A$. This holds for any $D\subseteq \acl(AC)$ hence $A'B\indi{m}_C  A$.
\end{proof}

  Let $\ind$, $\ind'$ be two ternary relations, such that $\ind'$ is stronger than $\ind$. If $\ind'$ satisfies \ref{BMON} then $\ind'$ is stronger than $\indi{m}$. Note that $\ind$ may be symmetric and $\indi{m} $ not (see Corollary~\ref{cor_fork}). However in some cases, the monotonised is symmetric, as shows the following example.

\begin{ex}\label{ex_ACFmon}
We work here in $\ACF$. We have $$A{\indi a}^{m}_C\quad B \iff A\indi{\ACF}_C\quad B.$$
Indeed the right to left implication follows from $\indi{\ACF}\quad\rightarrow \indi a$ and the fact that $\indi{\ACF}\quad$ satisfies \ref{BMON}. 
From left to right, assume that $A\nindi{\ACF}_C\quad  B$, we may assume that $A,B,C$ are algebraically closed, and $C=A\cap B$. 
There exists $b_1,\dots,b_s\in B$ algebraically independent over $C$ such that for $D=\set{b_2,\dots,b_s}$, then we have $b_1\in (\overline{AD}\cap B) \setminus \overline{CD}$ so $A{\nindi a}^{m}_C B$. \\

This result translates as follows: in $\ACF$, $\indi f = {\indi a }^{m}$. It raises the following question: when do we recover forking independence from the monotonised of the relation $\indi a$? Does the \ref{SYM} of the monotonised of a symmetric relation imply nice features on the theory? Observe that the proof above shows that in any pregeometry $(S,\cl)$, the independence relation associated with the pregeometry is obtained by forcing \ref{BMON} on the relation $A \ind _C B\iff \cl(AC)\cap \cl(BC) = \cl(C)$.
\end{ex}

The following example shows that the monotonised does not preserve \ref{LOC}. Also it implies that $\indi{st}$ doesn't satisfy \ref{LOC} since $\indi{st}\rightarrow {\indi w}^{m}$.

\begin{ex}\label{ex_monloc}
  In $\ACFG$, the relation $\indi{w}^{m}$ does not satisfy \ref{LOC}.\\
  Let $\kappa$ be any uncountable cardinal and consider the set $A = \set{t_i,t_i'\mid i<\kappa}$ and an element $t$ such that $t(t_i,t_i')_{i<\kappa}$ are algebraically independent over $K$. Let $F = \overline{\F_p(t,A)}$ and define $H$ over $F$ as $G(\overline{\F_p})+\vect{t\cdot t_i+t_i'\mid i<\kappa}$. The pair $(F,H)$ defines a consistent type over $\emptyset$, as $\overline{\F_p}\cap H = G(\overline{\F_p})$ and $F\cap K = \overline{\F_p}$, so we assume that $t,A$ are realisation of the type in $K$.
By contradiction suppose that there exists $A_0\subset A$ with $\abs{A_0}\leq \aleph_0$ such that $t\indi{w}^{m}_{A_0} A$. By definition, for all $D\subseteq A$ we have $t\indi{w}_{A_0D} A$. Let $D = \set{t_i\mid i<\kappa}\setminus A_0$. We have that 
$$G(\overline{tDA_0} + \overline{A}) = G(\overline{tDA_0}) + G(\overline{A}).$$
We compute the $\F_p$-dimension over $G(\overline{\F_p})$ on each side of the previous equation. On one hand, we have $t\cdot t_i + t_i'\in G(\overline{tDA_0} + \overline{A}) $ for all $i<\kappa$, as they are $\F_p$-linearly independent over $\overline{\F_p}$ we have $\F_p$-$\dim(G(\overline{tDA_0} + \overline{A})/G(\overline{\F_p})) \geq \kappa$. 
For all $i<\kappa$, $t\cdot t_i +t_i'\in G(\overline{tDA_0})$ if and only if $t_i'\in \overline{tDA_0}$ if and only if $t_i'\in A_0$, because if $t_i'$ is algebraic over $t, A_0,t_1,\dots,t_k$ then $t$ is in $A_0$ otherwise this contradicts that $t ,A$ are algebraically independent.
We conclude that $\F_p$-dim$(G(\overline{tDA_0})/G(\overline{\F_p})) \leq \abs{A_0}\leq \aleph_0$. As $G(\overline{A}) = G(\overline{\F_p})$ we have that $\F_p$-dim$([G(\overline{tDA_0}) +G(\overline{A})]/G(\overline{\F_p})) \leq \aleph_0$ so the equality cannot hold.
\end{ex}

\begin{df}[Adler,~\cite{A09} Section 3] For $\ind$ any ternary relation, $\indi{*}$ is defined as follows:
$$A\indi{*}_C B \iff \forall \hat{B}\supseteq B\  \exists A'\equiv_{BC}A \ A'\ind_C \hat B.$$
 \end{df}
 
\begin{fact}[~\cite{A09} Lemma 3.1]\label{fact_ext}
 If $\ind$ satisfies \ref{INV} and \ref{MON} then $\indi *$ satisfies \ref{INV}, \ref{MON} and \ref{EXT2}. Furthermore, for each of the following point
  \begin{itemize}
  \item \ref{BMON}
    \item \ref{TRA} 
 \item \ref{EXT} 
 \end{itemize}
if $\ind$ satisfies it then so does $\indi{*}$. 
\end{fact}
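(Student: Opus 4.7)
My plan is to check each of the six listed properties of $\indi*$ in turn, in each case unwinding the definition and applying the corresponding property of $\ind$. The recurring technical device is a saturation--homogeneity argument in the monster $\M$: once one applies the hypothesis $A\indi*_C B$ to a sufficiently saturated large test set $\hat B^{*}\supseteq B$, any other small $\hat B\supseteq B$ can be embedded into $\hat B^{*}$ over the relevant base, and this embedding extends to an automorphism of $\M$ through which one transports the witness.

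The properties \ref{INV} and \ref{MON} are direct. For \ref{INV}, an automorphism $\sigma$ realising $ABC\equiv A'B'C'$ transports witnesses through $\sigma$, invoking \ref{INV} of $\ind$. For \ref{MON}, if $A\indi*_C BD$ and $\hat B\supseteq B$, apply the hypothesis to $\hat B\cup BD\supseteq BD$ and conclude by \ref{MON} of $\ind$. The core of the argument is \ref{EXT2}: given $A\indi*_C B$ and a set $D$, choose a small elementary submodel $\MM\supseteq BCD$ saturated beyond the size of any ``small'' test set. Applying $A\indi*_C B$ with $\hat B=\MM$ produces $A'\equiv_{BC}A$ with $A'\ind_C\MM$. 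For any small $\hat B\supseteq BD$, saturation of $\MM$ gives an embedding of $\hat B$ into $\MM$ over $BDC$; homogeneity extends this to an automorphism $\sigma$ of $\M$ fixing $BDC$. Then $\sigma(A')\equiv_{BDC}A'$, and \ref{INV} together with \ref{MON} of $\ind$ give $\sigma(A')\ind_C\hat B$, so $A'$ witnesses $A\indi*_C BD$.

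The three preservation clauses follow the same pattern. For \ref{BMON}, apply $A\indi*_C BD$ to $\hat B\cup BD$ and use \ref{BMON} of $\ind$. For \ref{TRA}, chain the hypotheses $A\indi*_{CB}D$ and $B\indi*_C D$: given $\hat D\supseteq D$, use the second to find $B_1\equiv_{DC}B$ with $B_1\ind_C\hat D$; align the first with $B_1$ via an automorphism $\sigma$ fixing $DC$; then apply $\sigma(A)\indi*_{CB_1}D$ to $\hat D$ to obtain $A_1\equiv_{DCB_1}\sigma(A)$ with $A_1\ind_{CB_1}\hat D$; \ref{TRA} of $\ind$ gives $A_1B_1\ind_C\hat D$, and $A_1B_1\equiv_{DC}AB$ by stringing the equivalences together. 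For \ref{EXT}, apply \ref{EXT} of $\ind$ with a sufficiently saturated $\hat B^{*}\supseteq BC$ to get $A'\equiv_C A$ with $A'\ind_C\hat B^{*}$, then propagate to arbitrary small $\hat B\supseteq B$ by the same saturation--automorphism trick as for \ref{EXT2}.

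The main obstacle is \ref{EXT2}: not because the combinatorics is hard but because it is the only point where one must be careful about sizes -- the pre-chosen $\hat B^{*}$ must be saturated enough to embed any small $\hat B$ over $BDC$, and the monster must be homogeneous enough to realise the pullback. In Adler's framework, where one fixes a standard monster and a uniform notion of ``small'' once and for all, this is a background assumption; once it is in place, every other clause of the fact is a formal manipulation of the definition of $\indi*$ combined with the named property of $\ind$.
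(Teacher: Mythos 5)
The paper itself gives no proof of this fact (it is quoted from Adler), so I am measuring your argument against the standard one. Your treatments of \ref{INV}, \ref{MON}, \ref{BMON} and \ref{TRA} are correct: they are exactly the routine quantifier manipulations, and the chaining in \ref{TRA} via an automorphism aligning $B$ with $B_1$ is right. The gap is in \ref{EXT2}, and in the identical step of your \ref{EXT} argument: the object you rely on, a \emph{small} set $\MM\supseteq BCD$ that is ``saturated beyond the size of any small test set'', does not exist. The test sets $\hat B$ range over all sets of cardinality below the saturation degree $\bar\kappa$ of the monster; a model into which every such $\hat B$ embeds over $BDC$ would have to be $\bar\kappa$-saturated, hence of cardinality at least $\bar\kappa$, hence not small --- and then you are not entitled to apply the hypothesis $A\indi{*}_C B$ with $\hat B=\MM$, since the definition of $\indi{*}$ quantifies only over small supersets of $B$. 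Nor can you bound the size of the test sets in advance: $\ind$ is only assumed invariant and monotone, with no finite or local character on the right-hand side, so the condition $A''\ind_C\hat B$ cannot be reduced to pieces of $\hat B$ of bounded cardinality.

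The correct argument replaces the single saturated $\MM$ by a type-counting contradiction. Suppose no $A'\equiv_{BC}A$ satisfies $A'\indi{*}_C BD$. Then for each such $A'$ there is a small $\hat B_{A'}\supseteq BD$ such that $A''\nind_C \hat B_{A'}$ for every $A''\equiv_{BDC}A'$; this witness can be chosen to depend only on $q=tp(A'/BDC)$, and there are at most $2^{\abs{BDC}+\abs{T}}$ such types, so $\hat B^{*}=BD\cup\bigcup_q \hat B_q$ is small. Applying $A\indi{*}_C B$ to $\hat B^{*}\supseteq B$ yields $A^{*}\equiv_{BC}A$ with $A^{*}\ind_C\hat B^{*}$, and \ref{MON} of $\ind$ gives $A^{*}\ind_C\hat B_{q^{*}}$ for $q^{*}=tp(A^{*}/BDC)$, contradicting the choice of $\hat B_{q^{*}}$ with $A''=A^{*}$. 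The same repair works for \ref{EXT}: if no $A'\equiv_C A$ satisfied $A'\indi{*}_C B$, one would pick one bad witness per type over $BC$ extending $tp(A/C)$, take the union, and contradict \ref{EXT} of $\ind$ applied to that union. With these two steps replaced, the rest of your write-up stands.
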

 
Recall that $a\indi u _C b$ if and only if $tp(a/Cb)$ is finitely satisfiable in $C$.

\begin{rk}\label{lm_uf}
Let $(b_i)_{i<\kappa}$ be a $C$-indiscernible infinite sequence with $\kappa>\omega$. Then for all $\ \geq \alpha\geq \omega$ $$b_{<\beta} \indi u _{Cb_{<\alpha}}b_\beta .$$
Furthermore, for $\kappa$ big enough, the sequence $(b_i)_{i<\kappa}$ is indiscernible over $\acl(C)$ (see ~\cite[Corollary 1.7, 2.]{C11}).
\end{rk}

\begin{lm}\label{lm_clo}
  If $\ind $ satisfies \ref{INV} and \ref{EXT2}, then $A\ind_C B$ implies $A\ind _C \acl(CB)$. If $\ind$ further satisfies \ref{BMON}, then $\ind$ satisfies \ref{CLO}.
\end{lm}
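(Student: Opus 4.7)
The plan is to prove the first assertion by a direct combination of \ref{EXT2} and \ref{INV}. Assume $A\ind_C B$ and let $D$ enumerate $\acl(CB)$. Applying \ref{EXT2} to $A\ind_C B$ with this choice of $D$ produces some $A'\equiv_{CB}A$ with $A'\ind_C BD$; since $B\subseteq\acl(CB)$, we have $BD=\acl(CB)$, hence $A'\ind_C\acl(CB)$. The key point is then that any automorphism $\sigma$ of the monster fixing $CB$ pointwise with $\sigma(A')=A$ must permute $\acl(CB)$ as a set, because automorphisms preserve algebraic closures. Thus $\sigma$ carries the triple $(A',C,\acl(CB))$ to $(A,C,\acl(CB))$, and \ref{INV} delivers $A\ind_C\acl(CB)$.

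For the \ref{CLO} statement, the nontrivial direction is $A\ind_C B\Rightarrow A\ind_{\acl(C)}\acl(BC)$. By the first part we already have $A\ind_C\acl(BC)$. Since $\acl(C)\subseteq\acl(BC)$, I rewrite $\acl(BC)=\acl(BC)\cup\acl(C)$ and invoke \ref{BMON} with $D:=\acl(C)$, which yields $A\ind_{C\cup\acl(C)}\acl(BC)$; since $C\cup\acl(C)=\acl(C)$, this is exactly $A\ind_{\acl(C)}\acl(BC)$. The converse implication $A\ind_{\acl(C)}\acl(BC)\Rightarrow A\ind_C B$ is handled by the same scheme, applying the first part at base $\acl(C)$ together with the fact that an automorphism fixing $C$ pointwise preserves $\acl(C)$ setwise, so that \ref{INV} once again transports the independence down to the smaller base.

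The main subtlety I anticipate is keeping track of which sets are fixed pointwise and which only setwise by the various automorphisms involved. The entire argument rests on the single structural observation that an automorphism fixing a set pointwise fixes its algebraic closure only setwise; this observation is used once in the first assertion (to upgrade $B$ to $\acl(CB)$ on the right) and is what makes the \ref{BMON} step for \ref{CLO} work cleanly, without needing any additional monotonicity hypothesis beyond the three properties stated.
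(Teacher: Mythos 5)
Your proof of the first assertion is exactly the paper's argument: apply \ref{EXT2} with $D$ an enumeration of $\acl(CB)$ to obtain $A'\equiv_{CB}A$ with $A'\ind_C \acl(CB)$, then move $A'$ back to $A$ by an automorphism over $CB$, observe that this automorphism fixes $\acl(CB)$ setwise, and conclude by \ref{INV}. Your derivation of the left-to-right half of \ref{CLO} is also the intended one: from $A\ind_C\acl(BC)$, apply \ref{BMON} with $D=\acl(C)\subseteq\acl(BC)$ to land at base $\acl(C)$.

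The problem is your treatment of the converse implication $A\ind_{\acl(C)}\acl(BC)\Rightarrow A\ind_C B$. \ref{INV} only lets you replace a configuration $(A,C,B)$ by an automorphic image $(A',C',B')$ with $ABC\equiv A'B'C'$; it never lets you pass from the base $\acl(C)$ to the genuinely smaller set $C$, because these are not automorphic images of one another (an automorphism fixing $C$ pointwise fixes $\acl(C)$ setwise, but that transports nothing from base $\acl(C)$ to base $C$). Re-applying the first part at base $\acl(C)$ merely returns $A\ind_{\acl(C)}\acl(BC)$. Indeed this direction does not follow from \ref{INV}, \ref{EXT2} and \ref{BMON} alone: the relation ``$A\ind_C B$ if and only if $C$ is infinite'' satisfies all three properties, yet fails the right-to-left half of \ref{CLO} whenever $\acl(\emptyset)$ is infinite (as in $\ACF$). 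To be fair, the paper's own proof dismisses this direction as ``trivial'' and only ever uses the left-to-right implication of \ref{CLO} afterwards (e.g.\ in Lemma~\ref{lm_fork}); but the argument you offer for it is not valid as written, and you should either restrict the conclusion to the forward implication or supply a genuinely different argument for the converse.
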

\begin{proof}
  Assume that $A\ind _C B$. By \ref{EXT2}, let $A'$ be such that $A'\equiv_{BC} A$ and $A'\ind _C \acl(BC)$. There is an automorphism $\sigma$ over $BC$ sending $A'$ to $A$ hence by \ref{INV}, $A\ind _C \sigma(\acl(BC))$. Now, as sets, $\sigma(\acl(BC)) = \acl(BC)$ so $A\ind_C \acl(BC)$. The last assertion is trivial, as $\acl(C)\subseteq \acl(BC)$.
\end{proof}
\begin{rk}\label{rk_clomon}
  By Lemma~\ref{lm_clo} and Fact~\ref{fact_ext}, if $\ind$ satisfies \ref{INV}, \ref{MON}, then $\indi *$ satisfies \ref{INV}, \ref{MON}, \ref{EXT2} and \ref{CLO} over algebraically closed sets. If $\ind$ satisfies also \ref{BMON}, then so does $\indi *$ hence $\indi *$ satisfies \ref{CLO} over any sets. In particular, by Lemma~\ref{lm_mon} if $\ind$ satisfies \ref{INV} and \ref{MON}, then ${\indi m\ }^*$ satisfies \ref{INV}, \ref{MON}, \ref{CLO}, \ref{BMON}, \ref{EXT2}. 
\end{rk}

\begin{fact}\label{fact_ind}
  The following are standard facts more or less obvious from the definition.
  \begin{enumerate}
   \item $\indi a$ satisfies \ref{INV}, \ref{MON}, \ref{TRA}, \ref{EX}, \ref{EXT2} and \ref{EXT};
   \item $\indi d$ satisfies \ref{INV}, \ref{MON}, \ref{BMON}, \ref{TRA};
   \item $\indi f$ satisfies \ref{INV}, \ref{MON}, \ref{BMON}, \ref{TRA} and \ref{EXT2};
   \item $\indi u$ satisfies \ref{INV}, \ref{MON}, \ref{BMON}, \ref{TRA}, \ref{EXT2}, \ref{EX} over models, \ref{EXT} over models;
 \item $\indi d\rightarrow \indi{aeq}\ \upharpoonright \M$;
 \item $\indi u \rightarrow \indi f \rightarrow \indi d \rightarrow \indi{aeq}\ \upharpoonright \M \rightarrow \indi a$;
 \item $\indi f\rightarrow \indi K$ and $\indi d \rightarrow \indi{Kd}$.
 \end{enumerate}
\end{fact}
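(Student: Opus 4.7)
The plan is to treat Fact~\ref{fact_ind} as a checklist of seven routine verifications, citing standard sources (e.g.\ Adler~\cite{A09}) for the few nontrivial steps rather than reproducing classical arguments.

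For (1), the properties \ref{INV}, \ref{MON}, \ref{TRA}, \ref{EX} for $\indi a$ follow at once from the corresponding properties of $\acl$. For \ref{EXT} and \ref{EXT2}, I would move $A$ generically via an automorphism of the monster, e.g.\ by descending from an $\indi u$-coheir extension over a sufficiently large model. For (2), \ref{INV} and \ref{MON} are immediate. For \ref{BMON}, suppose $A\indi d_C BD$ and that a $CD$-indiscernible sequence $(B_i)_i$ in $tp(B/CD)$ witnesses that some $\phi(x,B,D)\in tp(A/CBD)$ divides over $CD$; then $(B_i,D)_i$, constant in the second coordinate, is $C$-indiscernible in $tp(BD/C)$ and the same formulas $\phi(x,B_i,D)$ form an inconsistent set, witnessing that $\phi(x,B,D)$ divides over $C$, a contradiction. \ref{TRA} is the classical theorem of Shelah and I would invoke it directly. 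For (3), one lifts (2) termwise through the defining disjunction of forking, and \ref{EXT2} is the standard nonforking extension theorem. For (4), all properties are immediate from the definition of finite satisfiability: \ref{INV}, \ref{MON}, \ref{BMON}, \ref{TRA}, \ref{EXT2} require only restricting or extending a finitely-satisfiable type via an ultrafilter, while \ref{EX} and \ref{EXT} over models amount to producing a coheir of $tp(A/M)$.

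For (5), suppose towards contradiction that $e\in \acl^{eq}(Ma)\cap \acl^{eq}(Mb)\setminus \acl^{eq}(M)$. Pick a formula $\phi(x,a)$ isolating the orbit of $e$ over $Ma$ with minimal number of realizations and transfer along an $M$-indiscernible sequence $(a_i)_i$ in $tp(a/M)$, producing by a standard counting argument an inconsistent family of translates of $\phi$ that witnesses dividing of some formula in $tp(a/Mb)$ over $M$, contradicting $a\indi d_M b$. For (6), the chain decomposes as: $\indi u\rightarrow \indi f$ because finitely satisfiable global extensions are nonforking; $\indi f\rightarrow \indi d$ tautologically from the definitions; $\indi d\rightarrow \indi{aeq}\upharpoonright \M$ is exactly (5); and $\indi{aeq}\rightarrow \indi a$ by restricting to the home sort.

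For (7), the key observation is that any Morley sequence for a global invariant type over $M$ is in particular $M$-indiscernible, so any formula that Kim-divides over $M$ also divides over $M$. This yields $\indi d\rightarrow \indi{Kd}$, and applying the same observation inside the finite disjunction that defines Kim-forking yields $\indi f\rightarrow \indi K$. The only step with any real content is \ref{TRA} for $\indi d$ and $\indi f$; rather than reproducing the classical proof I would simply cite the standard references.
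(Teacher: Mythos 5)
Your plan coincides in substance with the paper's treatment: the paper disposes of this Fact entirely by citation (items (1)--(4) and (6) to Adler and to Chernikov--Kaplan, (5) to a remark of Adler, (7) to the definitions), and your sketches for (2), (3), (4), (6) and (7) are precisely the standard arguments behind those citations. In particular your proof of \ref{BMON} for $\indi d$ (a $CD$-indiscernible witness of dividing is a fortiori a $C$-indiscernible witness), the termwise lift to $\indi f$, and the observation for (7) that a Kim-dividing witness is in particular an $M$-indiscernible witness of ordinary dividing, are all correct.

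Two points need repair. First, in (1), the properties \ref{EXT} and \ref{EXT2} for $\indi a$ are the one genuinely non-obvious item on the list, and the route you suggest --- descending from an $\indi u$-coheir extension over a large model --- does not work: a coheir gives $A'\indi a _M B$ for a model $M\supseteq C$, but $\indi a$ satisfies neither base monotonicity nor any converse to it, so independence over $M$ yields nothing over the base $C$ (and choosing $M$ itself suitably independent from $B$ over $C$ is circular). The standard proof, which is the content of the citation the paper uses, goes through P.~M.~Neumann's lemma --- recorded in this very paper as Fact~\ref{neumann} for the proof of weak elimination of imaginaries --- and you should either reproduce that argument or cite it explicitly. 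Second, in (5) you have the sides crossed: to contradict $a\indi d _C b$ you must exhibit a formula of $tp(a/Cb)$ that divides over $C$, so the indiscernible sequence must be taken in $tp(b/C)$, moving the parameter $b$ (with the algebraic formula isolating the orbit of $e$ over $Cb$ providing the translates, and the algebraic formula over $Ca$ providing the pigeonhole bound for $k$-inconsistency). Moving $a$ along an indiscernible sequence in $tp(a/C)$, as you propose, would instead witness $b\nindi d _C a$, which is not what is assumed; since dividing is not symmetric in a general theory this is not a cosmetic issue. Note also that items (5) and (6) are stated over an arbitrary base $C$, not only over models; the corrected arguments go through verbatim in that generality.
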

\begin{proof}
  \textit{(1)} is~\cite[Proposition 1.5]{A09}. \textit{(2)} and \textit{(3)} are~\cite[Proposition 1.3]{Ad09}. \textit{(4)} is~\cite[Remark 2.16]{CK12}, \ref{BMON} is trivial. For \textit{(5)}, it is clear that if $a\indi d _C b$ in $\M$, then $a\indi d _C b$ in $\M^{eq}$, and by~\cite[Remark 5.4]{A09} it follows that $\acl^{eq}(Ca)\cap \acl^{eq}(Cb) = \acl^{eq}(C)$ hence $a\indi{aeq}_C\ b$. \textit{(6)} follows from~\cite[Example 2.22]{CK12}, and the previous results. \textit{(7)} is by definition.
\end{proof}

\begin{lm}\label{lm_fork}
  Let $\ind$ be a ternary relation, which satisfies 
  \begin{itemize}
    \item \ref{INV}, \ref{MON};
  \item $\indi u$-\ref{AM} over algebraically closed sets.
  \end{itemize}Then ${\indi{m}\ } ^{*} \rightarrow \indi f$. 
\end{lm}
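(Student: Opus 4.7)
The plan is in two stages: first reduce ${\indi{m}\ }^{*} \to \indi f$ to the implication ${\indi{m}\ }^{*} \to \indi d$, then establish the latter by an inductive amalgamation along a long indiscernible sequence, using $\indi u$-\ref{AM} of $\ind$.

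For the reduction, I use the standard characterization: $tp(a/Cb)$ does not fork over $C$ if and only if for every $D \supseteq Cb$ there exists $a' \equiv_{Cb} a$ with $tp(a'/D)$ not dividing over $C$ (if $\phi(x,b)$ forked witnessed by $\phi(x,b) \vdash \bigvee_i \psi_i(x,b_i')$, then for $D = Cbb_1'\cdots b_n'$ every extension of $tp(a/Cb)$ to $S(D)$ contains some dividing $\psi_i$). Suppose $a \ {\indi{m}\ }^{*}_C b$; given any $D \supseteq Cb$, \ref{EXT2} of ${\indi{m}\ }^{*}$ (Remark~\ref{rk_clomon}) provides $a' \equiv_{Cb} a$ with $a' \ {\indi{m}\ }^{*}_C D$. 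Granting ${\indi{m}\ }^{*} \to \indi d$, this $a'$ satisfies $a' \indi d_C D$; that is, $tp(a'/D)$ does not divide over $C$. Hence $a \indi f_C b$ as required.

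For ${\indi{m}\ }^{*} \to \indi d$, assume $a \ {\indi{m}\ }^{*}_C b$ and, using \ref{CLO}, WLOG $C = \acl(C)$. Suppose for contradiction $\phi(x,b) \in tp(a/Cb)$ divides over $C$, witnessed by a $C$-indiscernible $(b_i)_{i<\omega}$ with $b_0 = b$ and $\{\phi(x,b_i)\}$ being $k$-inconsistent; extend to $(b_i)_{i<\kappa}$ for $\kappa$ large, so Remark~\ref{lm_uf} applies. By \ref{EXT2}, find $a^{(0)} \equiv_{Cb_0} a$ with $a^{(0)} \indi m_C b_{<\kappa}$, so $a^{(0)} \ind_{Cb_{<\alpha}} b_\alpha$ for every $\alpha < \kappa$. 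For each $i$, shifting by an automorphism $\tau_i$ over $C$ with $\tau_i(b_0) = b_i$ gives $a^{(i)} := \tau_i(a^{(0)})$ satisfying $a^{(i)} b_i \equiv_C ab$ and $a^{(i)} \indi m_C b_{[i,\kappa+i)}$. Inductively build $\tilde a^{(n)}$ with $\tilde a^{(n)} b_i \equiv_C ab$ for $i \leq n$ by amalgamating $\tilde a^{(n-1)}$ (carrying $b_0,\ldots,b_{n-1}$) with $a^{(i_n)}$ for an index $i_n$ chosen deep in the tail, via $\indi u$-\ref{AM} of $\ind$ (over $\acl(C)$ suitably enlarged by an initial segment of length $\omega$ so that Remark~\ref{lm_uf} supplies the required $\indi u$-independence). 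A compactness/limit argument then yields $a^*$ with $a^* b_i \equiv_C ab$ for all $i < \omega$, hence $a^* \models \phi(x, b_i)$ for all $i$, contradicting the $k$-inconsistency.

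The delicate point is calibrating the $\indi u$-amalgamation at each step. Remark~\ref{lm_uf} supplies $\indi u$-independence of tails only over initial segments of length at least $\omega$, not simply over $C$; so either the amalgamation base must be enlarged to contain such an initial segment of $(b_i)$, or the sequence must be refined to a Morley-in-$\indi u$ subsequence (obtainable via iterated coheir extensions for $\kappa$ large enough), so that the relevant pairs are $\indi u$-independent over $\acl(C)$. This bookkeeping, together with verifying that the shifts $\tau_i$ interact coherently with the base enlargements throughout the induction and produce a single $a^*$ realizing $tp(a/Cb)$ over every $b_i$, is the main obstacle.
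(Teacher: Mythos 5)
Your overall strategy matches the paper's: reduce ${\indi{m}\ }^{*}\rightarrow \indi f$ to ${\indi{m}\ }^{*}\rightarrow \indi d$ via $\indi f={\indi d}^{*}$ together with \ref{EXT2} of ${\indi{m}\ }^{*}$, then realize the type along a long indiscernible sequence by iterating $\indi u$-\ref{AM}. The reduction step is fine. But the second stage is not yet a proof: you yourself flag ``calibrating the $\indi u$-amalgamation'' as ``the main obstacle'' and offer two possible fixes without carrying out either, and that calibration is precisely the content of the lemma. As written, your induction amalgamates over $\acl(C)$, where Remark~\ref{lm_uf} gives you no $\indi u$-independence among the $b_i$'s; and your shifts $\tau_i$ are automorphisms over $C$ only, so the translates $a^{(i)}$ need not share a type over whatever enlarged base the amalgamation ends up using, while $\indi u$-\ref{AM} requires exactly that common type over an algebraically closed base.

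The paper resolves this by committing to your first option and working throughout over the single fixed base $E=\acl(Cb_{<\omega})$, using only the tail $(b_i)_{\omega\leq i<\omega+\omega}$ of a long sequence $(b_i)_{i<\kappa}$. Remark~\ref{lm_uf} then gives $b_{\leq i}\indi u_{E} b_{i+1}$ (after \ref{CLO} and \ref{MON} of $\indi u$), and the tail is indiscernible over $E$ for $\kappa$ large. One picks $a_\omega$ with $a_\omega b_\omega\equiv_C ab$, applies \ref{INV}, \ref{CLO} and \ref{EXT2} of ${\indi{m}\ }^{*}$ to get $a_\omega'\equiv_{\acl(Cb_\omega)}a_\omega$ with $a_\omega'\ {\indi{m}\ }^{*}_{\acl(C)}\ b_\omega b_{<\omega}$, and then \ref{BMON} yields the crucial $a_\omega'\ind_{E} b_\omega$ over the \emph{enlarged} base. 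The shifts are taken to be $E$-automorphisms sending $b_\omega$ to $b_i$, so all the translates $a_i'$ realize one type over $E$ and the induction with $\indi u$-\ref{AM} over the algebraically closed set $E$ goes through; finally, indiscernibility of the whole sequence over $C$ transfers the realization found for the tail back to $(b_i)_{i<\omega}$, giving $a\indi d_C b$. Until this bookkeeping (or your Morley-subsequence alternative) is actually executed, the proposal is a correct plan with its hardest step left open.
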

  \begin{proof}
    We show that ${\indi m \ }^* \rightarrow \indi d$, the result follows from $\indi f = {\indi d}^*$, in Fact~\ref{fact_ind}. By Remark~\ref{rk_clomon}, ${\indi{m}\ } ^{*}$ satisfies \ref{INV}, \ref{MON}, \ref{BMON}, \ref{EXT2} and \ref{CLO}. Assume $a{\indi{m}\ } ^{*} _C b$, for any $a,b,C$. Let $(b_i)_{i<\kappa}$ be a $C$-indiscernible sequence with $b = b_0$, for a big enough $\kappa$. By Remark~\ref{lm_uf}, $b_{<i}\indi u _{Cb<\omega} b_i$ for all $i\geq \omega$. By Fact~\ref{fact_ind}, and Lemma~\ref{lm_clo}, $\indi u $ satisfies \ref{CLO} and \ref{MON}, hence $b_{<i}\indi u _{\acl(Cb<\omega)} b_i$. Also $(b_i)_{i\geq\omega}$ is $Cb_{<\omega}$-indiscernible, so if $\kappa$ is big enough, by Remark~\ref{lm_uf} we have that $b_i\equiv_{\acl(Cb_{<\omega})} b_\omega$. 
    There exists a $C$-automorphisme sending $b$ to $b_{\omega}$ hence there exists some $a_\omega$ such that $a_\omega b_\omega\equiv_{C} ab$. By \ref{INV}, we have $a_\omega {\indi{m}\ }^{*}_C b_\omega $, so by \ref{CLO} we have $a_\omega {\indi m\ }^{*}_{\acl(C)} \acl(Cb_\omega)$, hence by \ref{EXT2} there exists $a_\omega'$ such that $a_\omega'\equiv_{\acl(Cb_\omega)} a_\omega$ and $a_\omega' {\indi{m}\ }^*_{\acl(C)}   b_\omega b_{<\omega}$. It follows from \ref{CLO} and \ref{BMON} that
    $$a_\omega' \ind_{\acl(Cb_{<\omega})} b_\omega.$$
We also have $$a_\omega'b_\omega \equiv_C a_\omega b_\omega \equiv_C ab.$$
For each $i\geq \omega$ there exists an $\acl(Cb_{<\omega})$-automorphism $\sigma_i$ sending $b_\omega$ to $b_i$, so setting $a_i' = \sigma_i(a_\omega')$ we have:
$$\forall i\geq \omega\ \ a_i'b_i \equiv_{\acl(Cb_{<\omega})} a_\omega'b_\omega \text{ and } a_i'\ind_{\acl(Cb_{<\omega})} b_i.$$

We show that there exists $a''$ such that $a''b_i \equiv_{\acl(Cb_{<\omega})} a_\omega b_\omega$ for all $\omega \leq i<\omega+\omega$. By induction and compactness, it is sufficient to show that for all $\omega\leq i<\omega+\omega$, there exists $a_i''$ such that for all $\omega \leq k\leq i$ we have $a''_i b_k \equiv_{\acl(Cb_{<\omega})} a_\omega b_\omega$ and $a_i''\ind_{\acl(Cb_{<\omega})} b_{\leq i}$. For the case $i = \omega$ take $a_\omega'' = a_\omega'$. Assume that $a_i''$ has been constructed, we have 
$$ a_{i+1}' \ind_{\acl(Cb_{<\omega})}b_{i+1}\text{ and } b_{\leq i} \indi{u}_{\acl(Cb_{<\omega})} b_{ i+1} \text{ and }  a_i''\ind_{\acl(Cb_{<\omega})}b_{\leq i}.$$

As $a_{i+1}' \equiv_{\acl(Cb_{<\omega})} a_i''$, by $\indi{u}$-\ref{AM} over algebraically closed sets, there exists $a_{i+1}''$ such that 
\begin{enumerate}
    \item $a_{i+1}''b_{i+1} \equiv_{\acl(Cb_{<\omega})} a_{i+1}'b_{i+1}$
    \item $a_{i+1}'' b_{\leq i}\equiv_{\acl(Cb_{<\omega})} a_i''b_{\leq i}$
    \item $a_{i+1}'' \ind_{\acl(Cb_{<\omega})} b_{\leq i+1}$.
\end{enumerate}
By induction and compactness there exists $a''$ be such that $a''b_i \equiv_{\acl(Cb_{<\omega})} a_\omega b_\omega$ for all $\omega \leq i<\omega+\omega$. By indiscernibility of $(b_i)_{i<\kappa}$ there exists $a''' $ such that for all $i<\kappa$ $a'''b_i \equiv_C ab$, hence $a\indi{d}_C b$.
\end{proof}

\begin{rk}
  It is important to observe that since $\indi u$ is not in general a symmetric relation, the parameters $a$ and $b$ in the statement of $\indi u$-\ref{AM} do not play a symmetrical role. If a relation satisfies $\indi u$-amalgamation, we mean that $tp(c_1/Ca)$ and $tp(c_2/Cb)$ can be amalgamated whenever $a\indi u _C b$ or $b\indi u _C a$.
\end{rk}

\begin{prop}\label{prop_fork}
Let $\ind$ be a relation such that
\begin{enumerate}
\item $\ind$ is weaker than $\indi d$;
\item $\ind$ satisfies \ref{INV}, \ref{MON}, $\indi u$-\ref{AM} over algebraically closed sets;
\item ${\indi m}$ satisfies \ref{EXT2} over algebraically closed sets;
\end{enumerate}
Then ${\indi m} = \indi f = \indi d$. 
\end{prop}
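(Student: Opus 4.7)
The plan is to establish the chain of implications $\indi m \rightarrow \indi f \rightarrow \indi d \rightarrow \indi m$, which gives the desired equalities. The implication $\indi f \rightarrow \indi d$ is standard (Fact~\ref{fact_ind}). For $\indi d \rightarrow \indi m$: given $A \indi d_C B$, by BMON of $\indi d$ (Fact~\ref{fact_ind}(2)), for every $D\subseteq \acl(CB)$ we have $A\indi d_{CD} B$; hypothesis (1) then yields $A\ind_{CD} B$, and since this holds for all such $D$, we conclude $A\indi m_C B$.

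The crucial direction is $\indi m \rightarrow \indi f$, and the main input is Lemma~\ref{lm_fork}, which gives ${\indi m}^* \rightarrow \indi f$ under hypothesis (2). The first step is to observe that over algebraically closed sets we have $\indi m = {\indi m}^*$: the implication ${\indi m}^* \rightarrow \indi m$ is automatic from INV and MON (by taking $\hat B = B$ in the definition of $\indi{m}^*$), and the converse $\indi m \rightarrow {\indi m}^*$ is precisely what EXT2 provides (hypothesis (3)). Consequently, for every algebraically closed base $C$, the composition $\indi m = {\indi m}^* \rightarrow \indi f$ holds.

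To handle an arbitrary base, I would first show that $A\indi m_C B$ implies $A\indi m_{\acl(C)} \acl(CB)$. For the move from $C$ to $\acl(C)$: for any $D'\subseteq \acl(CB)$, the set $D := \acl(C)\cup D'$ is contained in $\acl(CB)$ and satisfies $CD = \acl(C) D'$; thus $A\ind_{CD}B$ gives $A\ind_{\acl(C)D'} B$, which is exactly $A\indi m_{\acl(C)}B$. Now apply EXT2 of $\indi m$ over the algebraically closed base $\acl(C)$ with the auxiliary set $\acl(CB)$: there exists $A'\equiv_{\acl(C)B}A$ with $A'\indi m_{\acl(C)}\acl(CB)$, and invariance (via any automorphism over $\acl(C)B$ sending $A'$ to $A$) upgrades this to $A\indi m_{\acl(C)}\acl(CB)$. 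The algebraically-closed case then yields $A\indi f_{\acl(C)}\acl(CB)$, and finally CLO of $\indi f$ (which follows from Fact~\ref{fact_ind}(3) together with Lemma~\ref{lm_clo}) gives $A\indi f_C B$.

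The main technical content is already packaged in Lemma~\ref{lm_fork}; what remains is bookkeeping that transports the hypothesis "EXT2 over algebraically closed sets" through $\indi m \leftrightarrow {\indi m}^*$ and then through the CLO property of $\indi f$. I do not anticipate any significant obstacle beyond the correct unravelling of definitions performed above.
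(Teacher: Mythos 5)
Your proof is correct and follows essentially the same route as the paper: $\indi d \rightarrow \indi m$ from \ref{BMON} of $\indi d$ together with hypothesis \textit{(1)}, $\indi m = {\indi m}^*$ from hypothesis \textit{(3)}, ${\indi m}^* \rightarrow \indi f$ from Lemma~\ref{lm_fork}, and $\indi f \rightarrow \indi d$ to close the cycle. The paper's proof is a one-liner that elides the bookkeeping around the restriction of \ref{EXT2} to algebraically closed base sets; your extra care there (passing to $\acl(C)$ and $\acl(CB)$ and invoking the standard fact that forking over $C$ is equivalent to forking over $\acl(C)$ for the final descent) is a refinement of the same argument rather than a different one.
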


\begin{proof}
The relation $\indi d$ satisfies \ref{BMON} by Fact~\ref{fact_ind} hence from \textit{(1)} we have $\indi d \rightarrow {\indi m}$. By hypothesis \textit{(3)}, ${\indi m}={\indi m\ }^*$, hence by \textit{(2)} and Lemma~\ref{lm_fork} we have $\indi d = {\indi m}=\indi f$.
\end{proof}

%
%

\subsection{Forking in $\ACFG$} 

We show that forking in $\ACFG$ is obtained by forcing the property \ref{BMON} on Kim-independence. 

We work in a big model $(K,G)$ of $\ACFG$.

\begin{lm}\label{lm_modgp}
  Let $A,B,C$ be three additive subgroups of $K$, then $A\cap (B+C) = A\cap \left[B+ C\cap(A+B)\right]$. 
\end{lm}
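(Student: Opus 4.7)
The plan is to prove the two inclusions separately, noting that this is a purely group-theoretic modularity-style identity (no use of $\ACFG$ structure is needed).

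For the inclusion $\supseteq$, I would simply unwrap the definitions: any element $x$ on the right-hand side lies in $A$ and is of the form $x = b + c$ with $b \in B$ and $c \in C \cap (A+B) \subseteq C$, so $x \in A \cap (B + C)$.

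For the inclusion $\subseteq$, let $x \in A \cap (B + C)$ and write $x = b + c$ with $b \in B$ and $c \in C$. The key observation is that $c = x - b$ with $x \in A$ and $b \in B$, hence $c \in A + B$. Therefore $c \in C \cap (A + B)$, and consequently $x = b + c \in B + C \cap (A + B)$. Combined with $x \in A$, this yields $x \in A \cap [B + C \cap (A+B)]$.

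There is no real obstacle here: the whole argument is a one-line manipulation showing that the ``correction term'' $c = x - b$ automatically lies in $A + B$ once $x \in A$ and $b \in B$. The point of isolating the lemma is presumably to set up a modularity rewriting in a later computation involving the subgroups $G(\overline{A})$, $G(\overline{B})$, $G(\overline{C})$ inside $K$.
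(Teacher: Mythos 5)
Your proof is correct and is essentially identical to the paper's: both directions follow by writing $x = b + c$ and observing that $c = x - b \in A + B$, with the reverse inclusion being trivial. Nothing further is needed.
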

\begin{proof}
  Let $a \in A\cap (B+C)$. There exist $b\in B$ and $c\in C$, such that $a = b+c$. Then $c = a-b\in C\cap (A+B)$ hence $a\in A\cap \left[B+ C\cap(A+B)\right]$. The other inclusion is trivial. 
\end{proof}

\begin{lm}[Mixed Transitivity on the left]\label{mixed_tran}
Let $A,B,C,D$ be algebraically closed sets, with $A,B,D$ containing $C$ and $B\subseteq D$. If $A\indi{w}^{m}_C\ \ \  B$ and  $A\indi{st}_B D $ then $A\indi{w}^{m}_C\ \ \  D$.
\end{lm}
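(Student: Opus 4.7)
The plan is to verify the two defining conditions of $A\indi{w}_F D$ for an arbitrary algebraically closed set $F$ with $C\subseteq F \subseteq D$, namely the $\ACF$-independence $A\indi{\ACF}_F\quad D$ and the group condition $G(\overline{AF}+D) = G(\overline{AF}) + G(D)$.

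For the $\ACF$-independence, I apply the hypothesis $A\indi{w}^{m}_C\ \ \ B$ to the algebraically closed intermediate $E = F\cap B$ (which satisfies $C\subseteq F\cap B \subseteq B$) to obtain $A\indi{w}_{F\cap B} B$, and in particular $A\indi{\ACF}_{F\cap B}\quad B$. Combined with $A\indi{\ACF}_B\quad D$ (from $A\indi{st}_B D$) and $F\subseteq D$, the transitivity and base monotonicity of $\indi{\ACF}\quad$ in $\ACF$ yield in sequence $A\indi{\ACF}_B\quad F$, $A\indi{\ACF}_{F\cap B}\quad BF$, $A\indi{\ACF}_F\quad BF$, and finally $A\indi{\ACF}_F\quad D$.

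For the group condition, I take $g\in G(\overline{AF}+D)$ with a decomposition $g = x+y$ where $x\in \overline{AF}$ and $y\in D$. Since $g\in G(\overline{AD})$, the strong independence $A\indi{st}_B D$ gives $g = g_1 + g_2$ with $g_1\in G(\overline{AB})$ and $g_2 \in G(D)$, so that $g_1 = x + (y-g_2) \in (\overline{AF}+D) \cap G(\overline{AB})$; it suffices to show $g_1 \in G(\overline{AF}) + G(D)$. The key algebraic claim is that
\[
\overline{AF} \cap (\overline{AB}+D) = \overline{A(F\cap B)} + F,
\]
which, combined with $\overline{AB}\cap D = B$, yields $g_1 \in \overline{A(F\cap B)} + B$ after a rearrangement: writing $x = r + f$ with $r\in \overline{A(F\cap B)}$ and $f\in F$, the element $g_1 - r = (y-g_2) + f$ lies in $\overline{AB}\cap D = B$, so $g_1 = r + b \in \overline{A(F\cap B)} + B$. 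Applying $A\indi{w}_{F\cap B} B$ to $g_1 \in G\cap (\overline{A(F\cap B)} + B)$ decomposes $g_1 = h_1 + h_2$ with $h_1\in G(\overline{A(F\cap B)}) \subseteq G(\overline{AF})$ and $h_2\in G(B) \subseteq G(D)$, yielding $g = g_1 + g_2 \in G(\overline{AF}) + G(D)$.

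The main obstacle is establishing the key intersection claim. A naive iterated application of Lemma~\ref{lm_modgp} produces a circular system of equivalent formulas. The argument must instead combine Lemma~\ref{lm_modgp} with the algebraic identity $\overline{AF} \cap \overline{AB} = \overline{A(F\cap B)}$ --- itself derivable by applying transitivity of $\indi{\ACF}\quad$ to $A\indi{\ACF}_F\quad B$ (from the $\ACF$-independence step) together with the automatic $F \indi{\ACF}_{F\cap B}\quad B$ for algebraically closed subfields --- and a careful shift argument exploiting that any decomposition $u = v + d$ (with $v\in \overline{AB}$, $d\in D$) of an element of $\overline{AF}$ can be adjusted by an element of $\overline{AB}\cap D = B$ so that the $\overline{AB}$-component lies in $\overline{A(F\cap B)}$ and the $D$-component lies in $F$.
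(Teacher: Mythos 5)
Your reduction is structurally the same as the paper's: fix an algebraically closed $F$ with $C\subseteq F\subseteq D$, split $g\in G(\overline{AF}+D)\subseteq G(\overline{AD})=G(\overline{AB})+G(D)$ using $A\indi{st}_B D$, show the $G(\overline{AB})$-component lies in $\overline{A(F\cap B)}+B$, and finish with $A\indi{w}_{F\cap B}B$. The gap is exactly where you flag it: the key claim $\overline{AF}\cap(\overline{AB}+D)=\overline{A(F\cap B)}+F$ is asserted, not proved. Your ``shift argument'' needs, for $u=v+d$ with $u\in\overline{AF}$, $v\in\overline{AB}$, $d\in D$, some $b\in B$ with $d+b\in F$; since $d=u-v\in(\overline{AF}+\overline{AB})\cap D$, this is precisely the inclusion $(\overline{AF}+\overline{AB})\cap D\subseteq F+B$. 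That inclusion is a genuine fact about $\ACF$-independence: it does not follow formally from $\overline{AF}\cap D=F$ and $\overline{AB}\cap D=B$ (in a general vector space, $U\cap W=U_0$ and $V\cap W=V_0$ do not give $(U+V)\cap W=U_0+V_0$), and it cannot be extracted from the purely group-theoretic modularity of Lemma~\ref{lm_modgp} --- hence the circularity you ran into. The paper imports it from~\cite[Lemma 5.16]{dE18A} applied with $T=\ACF$, which yields $(\overline{AF}+\overline{AB})\cap D=F+B$, and only then performs the computation you describe. So you have correctly isolated the missing ingredient but not supplied it.

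A secondary problem: your justification of $\overline{AF}\cap\overline{AB}=\overline{A(F\cap B)}$ rests on the assertion that $F\indi{\ACF}_{F\cap B} B$ is ``automatic for algebraically closed subfields,'' which is false (two algebraically closed subfields of $D$ need not be $\ACF$-independent over their intersection; e.g.\ take $F,B$ of transcendence degree $2$ over $C$ with $\mathrm{trdeg}(FB/C)=3$ and $F\cap B=C$). The identity you want is true here, but it is a consequence of $A\indi{\ACF}_C D$: the paper derives it by noting that $\overline{AF}\cap\overline{AB}$ is $\ACF$-independent from $FB$ over $F$ and over $B$, hence over $F\cap B$ by elimination of imaginaries in $\ACF$, and then concluding. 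As written, both the main claim and this auxiliary identity are unsupported, so the proof is incomplete.
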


\begin{proof}
Let $A,B,C,D$ be as in the hypothesis. Let $E\subseteq D$ containing $C$, we want to show that $A\indi{w}_E D$. We may assume that $E$ is algebraically closed. We clearly have $A\indi{\ACF}_E\quad  D$, so we have to show that $$G(\overline{AE} + D) = G(\overline{AE})+G(D).$$
From $A\indi{\ACF}_C \quad  E,B$ we have $\overline{AE}\cap \overline{AB}\indi{\ACF}_E\  E,B$ and $\overline{AE}\cap \overline{AB}\indi{\ACF}_B\ E,B$. By elimination of imaginaries in $\ACF$, $\overline{AE}\cap \overline{AB}\indi{\ACF}_{E\cap B} E,B$. By Lemma~\ref{lm_CH}, it follows that $\overline{AE}\cap \overline{AB} = \overline{A(E\cap B)}$. \\
\begin{claim} 
  $(\overline{AE}+D)\cap (\overline{AB}+D) = \overline{A(E\cap B)}+D$.
\end{claim}
\begin{proof}[Proof of the claim]
  By modularity, we have that $(\overline{AE}+D)\cap (\overline{AB}+D) = D + \ol{AE}\cap (\ol{AB}+D)$. By Lemma~\ref{lm_modgp} we have that 
  $$\ol{AE}\cap (\ol{AB}+D) = \ol{AE}\cap \left(\ol{AB}+(\ol{AE}+\ol{AB})\cap D\right).$$
  Applying~\cite[Lemma 5.16.]{dE18A} with $T=\ACF$, we have $(\overline{AE}+\overline{AB})\cap D = E+B$, hence 
\begin{align*}
  \ol{AE}\cap (\ol{AB}+D) &= \ol{AE}\cap(\ol{AB}+E+B) \\
  &=\ol{AE}\cap (\ol{AB} + E)\\
  &= \ol{AE}\cap \ol{AB} + E\text{ by modularity }\\
  &= \ol{A(E\cap B)}+E.
\end{align*}
It follows that $(\overline{AE}+D)\cap (\overline{AB}+D) = \ol{A(E\cap B)} + D +E = \ol{A(E\cap B)}+D$.
\end{proof}

By hypothesis, $G(\overline{AD}) = G(\overline{AB})+G(D)$, so, by the claim $$G(\overline{AE} + D) = G(\overline{AE} + D)\cap (G(\overline{AB})+G(D)) = G\left(\overline{A(E\cap B)}+D\right)\cap G(\overline{AB})+G(D).$$
Furthermore $G\left(\overline{A(E\cap B)}+D\right)\cap G(\overline{AB}) =G\left(\overline{A(E\cap B)} + D\cap\overline{AB}\right) = G\left(\overline{A(E\cap B)} +B\right)$. As $A\indi{w}^{m}_C   B$ we have $G(\overline{A(E\cap B)} +B) = G(\overline{A(E\cap B)}) +G(B)$. We conclude that
$$G(\overline{AE} + D) = G(\overline{A(E\cap B)}) +G(B) + G(D) = G(\overline{A(E\cap B)})+ G(D).$$
\end{proof}

\begin{cor}\label{cor_fork}
In $\ACFG$, ${\indi{w}}^{m}$ satisfies \ref{EXT2}. In particular, in ${\indi{w}}^{m}=\indi f = \indi d$.
\end{cor}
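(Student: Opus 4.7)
The idea is to prove \ref{EXT2} for ${\indi w}^m$ directly, using full existence for $\indi{st}$ (Proposition~\ref{lm_propst}) as the source of the extending tuple, and Mixed Transitivity (Lemma~\ref{mixed_tran}) to convert the $\indi{st}$-statement into a ${\indi w}^m$-statement. The ``in particular'' clause is then just an invocation of Proposition~\ref{prop_fork}.

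\textbf{Step 1: \ref{EXT2} for ${\indi w}^m$.} Assume $A\indi{w}^m_C B$ and let $D$ be given; I seek $A'\equiv_{CB}A$ with $A'\indi{w}^m_C BD$. First, by \ref{INV} and the transfer properties of \ref{CLO} for $\indi w$ (Theorem~\ref{thm_indw}), I reduce to the case where $C$, $\overline{BC}$ and $\overline{BCD}$ are already algebraically closed; call these $C$, $B_1$, $D_1$, so that $C\subseteq B_1\subseteq D_1$. By Proposition~\ref{lm_propst}, $\indi{st}$ satisfies full existence over algebraically closed sets, so I can pick $A'\equiv_{B_1} A$ with $A'\indi{st}_{B_1} D_1$; enlarging $A'$ to $\overline{A'C}$ is harmless. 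Since $BC\subseteq B_1$, \ref{INV} gives $A'\indi{w}^m_C B$, and a short check using \ref{CLO} for $\indi w$ together with \ref{MON} upgrades this to $A'\indi{w}^m_C B_1$. Now the hypotheses of Lemma~\ref{mixed_tran} are met (all sets algebraically closed, $B_1\subseteq D_1$, $A'\indi{w}^m_C B_1$ and $A'\indi{st}_{B_1} D_1$), so I conclude $A'\indi{w}^m_C D_1$, and \ref{MON} finishes: $A'\indi{w}^m_C BD$.

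\textbf{Step 2: ${\indi w}^m = \indi f = \indi d$.} Apply Proposition~\ref{prop_fork} to $\ind=\indi w$. The three hypotheses to verify are: (1) $\indi w$ is weaker than $\indi d$, which follows from $\indi d\to \indi{Kd}$ (Fact~\ref{fact_ind}(7)) together with the identification of $\indi w$ with Kim-independence over models in this theory (as established in~\cite{dE18A}, see Proposition~\ref{prop_kim}); (2) $\indi w$ satisfies \ref{INV} and \ref{MON} (Theorem~\ref{thm_indw}) and $\indi u$-\ref{AM} over algebraically closed sets — the latter because $\indi u\to\indi a$ by Fact~\ref{fact_ind}(6), so every instance of $\indi u$-amalgamation is an instance of $\indi a$-amalgamation, which is granted by Theorem~\ref{thm_indw}; (3) ${\indi w}^m$ satisfies \ref{EXT2} over algebraically closed sets, which is Step~1. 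The conclusion of Proposition~\ref{prop_fork} is exactly ${\indi w}^m=\indi f=\indi d$.

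\textbf{Main obstacle.} There is no genuine obstacle: the real content sits in Lemma~\ref{mixed_tran} and Proposition~\ref{prop_fork}, both already available. The only thing that requires care is the passage to algebraically closed enlargements in Step~1 — one must check that $A'\indi{w}^m_C B$ genuinely upgrades to $A'\indi{w}^m_C B_1$ — but this is routine given \ref{CLO} and \ref{MON} of $\indi w$.
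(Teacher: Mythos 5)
Your argument is essentially the paper's proof: full existence of $\indi{st}$ produces the extending copy $A'$, Lemma~\ref{mixed_tran} converts $A'\indi{st}_{\overline{BC}}\overline{BCD}$ together with $A'{\indi w}^{m}_C B$ into $A'{\indi w}^{m}_C BD$, and Proposition~\ref{prop_fork} then delivers ${\indi w}^{m}=\indi f=\indi d$. The one point to repair is your justification of hypothesis \textit{(1)} of Proposition~\ref{prop_fork}: the identification of $\indi w$ with Kim-independence is over \emph{models}, so the route through $\indi d\rightarrow\indi{Kd}$ only yields $\indi d\rightarrow\indi w$ over models, whereas the proof of Proposition~\ref{prop_fork} uses this implication over arbitrary base sets to get $\indi d\rightarrow{\indi w}^{m}$; the correct citation is Corollary~\ref{cor_kim}, whose implication \textit{(3)}$\Rightarrow$\textit{(1)} gives $\indi d\rightarrow\indi w$ over algebraically closed sets (and hence over all sets via \ref{CLO}).
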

\begin{proof}
  Assume that $a\indi{w}^{m}_C b$ and $d$ is given. By \ref{EXT} of $\indi{st}$ there exists $a'\equiv_{Cb} a$ such that $a'\indi{st}_{Cb} d$. Also $a'\indi{w}^{m}_C b$ hence by Lemma~\ref{mixed_tran} $a'\indi{w}^{m}_C b,d$, which shows \ref{EXT2} for ${\indi{w}}^{m}$. In particular $\indi w$ satisfies hypothesis \textit{(3)} of Proposition~\ref{prop_fork}. We check that it satisfies the rest of the hypotheses of Proposition~\ref{prop_fork}. \textit{(1)} follows from Corollary~\ref{cor_kim}. From Theorem~\ref{thm_indw}, $\indi w$ satisfies the properties \ref{INV}, \ref{MON} and $\indi u$-\ref{AM} over algebraically closed sets (since $\indi u\rightarrow \indi a$, by Fact~\ref{fact_ind}), so $\indi w$ satisfies \textit{(2)}.
\end{proof}

\subsection{Thorn-Forking in $\ACFG$}\label{sec_thornfork}
Let $(K,G)$ be a monster model of $\ACFG$.
Recall that $\indi{aeq}\quad$ is the relation $\indi a$ in the sense of $(K,G)^{eq}$. The thorn-forking independence relation $\indi{\text{\thorn}}$ is the relation defined over subsets of $(K,G)^{eq}$ by $\indi{\text{\thorn}} = {{(\indi{aeq}\quad  )}^{m}}^*$. We will only consider the restrictions of $\indi{aeq}\ $ and $\indi{\text{\thorn}}$ to the home sort, which we denote respectively by $\indi{aeq}\ \upharpoonright K$ and $\indi{\text{\thorn}}\upharpoonright K$. By Corollary~\ref{cor_acleq} and Theorem~\ref{thm_wei}, for $a,b,C\subset K$
$$a\indi{aeq} _C\ \ b \iff \overline{Ca}\cap \overline{Cb} = \overline{C} \text{ and } \pi(\overline{Ca})\cap \pi(\overline{Cb}) = \pi(\overline{C}).$$

\begin{fact}[~\cite{A09} Theorem 4.3]\label{fact_rosy}
The following are equivalent.
\begin{itemize}
    \item $T$ is rosy
    \item $\indi{\text{\thorn}}$ in $T^{eq}$ satisfies \ref{LOC}.
\end{itemize}
\end{fact}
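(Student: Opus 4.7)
The plan is to prove the equivalence via the elementary observation that \ref{LOC} transfers from stronger to weaker ternary relations: if $\ind \rightarrow \ind'$ and $\ind$ satisfies \ref{LOC}, then so does $\ind'$, because any countable set $B_0 \subseteq B$ witnessing $a\ind_{B_0} B$ equally witnesses $a\ind'_{B_0} B$. Combined with the definition $\indi{\text{\thorn}} = ((\indi{aeq})^{m})^{*}$ used in this subsection, both directions reduce to comparing $\indi{\text{\thorn}}$ with well-behaved independence notions.

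For $(\Leftarrow)$, note that by construction $\indi{\text{\thorn}}$ already satisfies \ref{INV}, \ref{MON}, \ref{BMON}, \ref{TRA}, \ref{EXT2} and \ref{CLO} (combining Remark~\ref{rk_clomon} applied to $(\indi{aeq})^m$), and it inherits anti-reflexivity from $\indi{aeq}$. Adjoining \ref{LOC} thus makes $\indi{\text{\thorn}}$ a strict independence relation in Adler's sense~\cite{A09}, which is one of the standard equivalent definitions of rosiness.

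For $(\Rightarrow)$, assume $T$ is rosy. By definition there exists a strict independence relation $\ind$ on $T^{eq}$ satisfying the usual Kim--Pillay-style axioms together with anti-reflexivity and, crucially, \ref{LOC}. I would establish $\ind \rightarrow \indi{\text{\thorn}}$ in three stages: first $\ind \rightarrow \indi{aeq}$; second, using \ref{BMON} of $\ind$, $\ind \rightarrow (\indi{aeq})^{m}$; third, using \ref{EXT2} of $\ind$, $\ind \rightarrow ((\indi{aeq})^{m})^{*} = \indi{\text{\thorn}}$. Then \ref{LOC} transfers from $\ind$ to $\indi{\text{\thorn}}$ by the opening observation.

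The hard part is the first stage, $\ind \rightarrow \indi{aeq}$. The strategy is classical: if $a \ind_C b$ but some $e \in \acl^{eq}(Ca)\cap\acl^{eq}(Cb)\setminus\acl^{eq}(C)$ exists, build a long $C$-indiscernible sequence $(a_i, b_i)_{i<\kappa}$ of $\ind$-independent copies of $(a,b)$ using \ref{EXT2}, \ref{SYM} and \ref{LOC}; each $b_i$ carries a conjugate $e_i$ of $e$ in $\acl^{eq}(Cb_i)$, and since $e$ has only boundedly many conjugates over $C$, indiscernibility forces some $e_i = e_j$ for $i \neq j$. Combined with $b_i \ind_C b_j$ this produces a nontrivial $\ind$-self-independence of an algebraic element over $C$, contradicting anti-reflexivity. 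This is Adler's forcing argument, and its careful execution is the main technical hurdle.
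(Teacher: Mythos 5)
The paper offers no proof of this statement: it is imported verbatim as a Fact from Adler~\cite{A09}, so you are supplying an argument where the paper supplies only a citation. Your overall architecture is the standard (and correct) one: for $(\Leftarrow)$, check that $\indi{\text{\thorn}}$ together with \ref{LOC} satisfies all axioms of a strict independence relation; for $(\Rightarrow)$, show that $\indi{\text{\thorn}}$ is weaker than any strict independence relation and observe that \ref{LOC} passes from a stronger relation to a weaker one. That transfer principle is correct, and stages two and three of your reduction (from $\ind\rightarrow\indi{aeq}\ $ to $\ind\rightarrow{(\indi{aeq}\ )}^{m}$ via \ref{BMON}, then to $\ind\rightarrow{{(\indi{aeq}\ )}^{m}}^{*}=\indi{\text{\thorn}}$ via \ref{EXT2}) are fine.

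There is, however, a genuine gap in your stage one. You write that ``$e$ has only boundedly many conjugates over $C$'' and use this to force $e_i=e_j$. In a monster model, having boundedly many conjugates over $C$ is precisely the statement $e\in\acl^{eq}(C)$ --- the conclusion you are trying to reach --- so the argument as written is circular. (What is true is that $e$ has finitely many conjugates over $Ca$; to exploit that you would need a single $a'$ realizing all the types $tp(a/Cb_i)$ simultaneously, which is the dividing-style version of the argument and which you have not arranged.) The correct argument for $\ind\rightarrow\indi{aeq}\ $ is much shorter and needs no indiscernible sequences: from $a\ind_C b$, normality, symmetry and closure under algebraic closure (cf.\ Lemma~\ref{lm_clo}) give $\acl^{eq}(Ca)\ind_C\acl^{eq}(Cb)$; if $e\in\acl^{eq}(Ca)\cap\acl^{eq}(Cb)$, then \ref{MON} applied on both sides yields $e\ind_C e$, and anti-reflexivity gives $e\in\acl^{eq}(C)$. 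Separately, in the $(\Leftarrow)$ direction you should also verify normality, \ref{TRA} and finite character for $\indi{\text{\thorn}}$ before declaring it a strict independence relation; these are not among the properties recorded in Remark~\ref{rk_clomon}, and finite character in particular requires its own argument (this package is Adler's Theorem~4.1).
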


\begin{prop}
Let $(K,G)$ be a model of $\ACFG$. Then $\indi{\text{\thorn}}\upharpoonright K = {\indi w}^{m} = \indi f = \indi d$. In particular $\ACFG$ is not rosy.
\end{prop}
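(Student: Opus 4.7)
The equalities $\indi{w}^m=\indi f=\indi d$ already appear in Corollary~\ref{cor_fork}, so the new content is the identification $\indi{\text{\thorn}}\upharpoonright K=\indi{w}^m$. My starting observation is that, restricted to tuples from $K$, the relations $\indi{aeq}\upharpoonright K$ and $\indi{w}$ are literally the same ternary relation. The characterization recalled at the start of Subsection~\ref{sec_thornfork} reads $a\indi{aeq}_C b$ iff $\overline{Ca}\cap\overline{Cb}=\overline{C}$ (equivalently $a\indi{\ACF}_C b$) together with $\pi(\overline{Ca})\cap\pi(\overline{Cb})=\pi(\overline{C})$. Lemma~\ref{lm_ind*} applied with $\gamma=\emptyset$ translates the second condition (under the first) into $G(\overline{Ca}+\overline{Cb})=G(\overline{Ca})+G(\overline{Cb})$, which is exactly the $G$-clause of $\indi{w}$. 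Since the monotonisation operator only enlarges the base by subsets $D\subseteq CB$, which stay in $K$ whenever $B,C\subset K$, this pointwise equality lifts at once to $(\indi{aeq})^m\upharpoonright K=\indi{w}^m$.

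Now to the two inclusions. For $\indi{\text{\thorn}}\upharpoonright K\to\indi{w}^m$: given $a,b,C\subset K$ with $a\indi{\text{\thorn}}_C b$, unravel the $*$-operator with the conservative choice $\hat B=Cb$; this produces $a'\equiv_{Cb}a$ with $a'(\indi{aeq})^m_C b$, i.e.\ $a'\indi{w}^m_C b$ by the previous paragraph, and \ref{INV} of $\indi{w}^m$ transports this back to $a\indi{w}^m_C b$. The reverse inclusion rests on two facts: $\indi f=\indi{w}^m$ by Corollary~\ref{cor_fork}, and the general principle $\indi f\to\indi{\text{\thorn}}$. The latter has a short self-contained proof in our framework: combining \ref{BMON} of $\indi f$ with $\indi f\to\indi{aeq}\upharpoonright\M$ (Fact~\ref{fact_ind}) shows $A\indi f_C B\Rightarrow A(\indi{aeq})^m_C B$; applying this to the tuple $a'$ furnished by \ref{EXT2} of $\indi f$ for an arbitrary $\hat B\supseteq Cb$ yields $\indi f\to((\indi{aeq})^m)^*=\indi{\text{\thorn}}$, as required.

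Non-rosyness then falls out of Fact~\ref{fact_rosy}: we need only show that $\indi{\text{\thorn}}$ in $(K,G)^{eq}$ fails \ref{LOC}. Example~\ref{ex_monloc} already provides a finite tuple and an uncountable set $A\subset K$ admitting no countable $A_0\subseteq A$ with $t\indi{w}^m_{A_0}A$; since any subset of $A$ is automatically in $K$, the equality $\indi{\text{\thorn}}\upharpoonright K=\indi{w}^m$ carries this failure to $\indi{\text{\thorn}}$. The one subtle point of the plan is the $*$-operator: a priori $\hat B\supseteq Cb$ could contain imaginaries from $(K,G)^{eq}$, but taking $\hat B=Cb$ extracts all the information needed, so the entire argument stays inside the home sort and the imaginary machinery enters only implicitly via the weak elimination of imaginaries (Theorem~\ref{thm_wei}) underlying the description of $\indi{aeq}\upharpoonright K$.
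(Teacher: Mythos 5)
Your overall architecture matches the paper's: the forward inclusion via the description of $\indi{aeq}\ \upharpoonright K$, the reverse inclusion via $\indi f\to\indi{\text{\thorn}}$ using \ref{BMON} and \ref{EXT2} of $\indi f$ together with Corollary~\ref{cor_fork}, and non-rosiness via failure of \ref{LOC} and Fact~\ref{fact_rosy}. The reverse inclusion and the non-rosiness step are correct as written; your route to non-rosiness through Example~\ref{ex_monloc} is precisely the variant the paper records in a subsequent remark. However, there is a genuine error in your first paragraph: the condition $\overline{Ca}\cap\overline{Cb}=\overline{C}$ is \emph{not} equivalent to $a\indi{\ACF}_C\quad b$. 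It is the relation $\indi a$ computed in $\ACF$, which is strictly weaker than forking independence in $\ACF$ --- this is exactly the content of Example~\ref{ex_ACFmon}, whose proof exhibits $A\indi a _C B$ with $A\nindi{\ACF}_C\quad B$ and shows that one must monotonise $\indi a$ to recover $\indi{\ACF}\quad$. Consequently $\indi{aeq}\ \upharpoonright K$ and $\indi w$ are \emph{not} the same ternary relation ($\indi{aeq}\ \upharpoonright K$ is strictly weaker), and the assertion that ``this pointwise equality lifts at once'' to the monotonisations has no pointwise equality to lift.

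The identity ${(\indi{aeq}\ )}^{m}\upharpoonright K={\indi w}^{m}$ that you need is nevertheless true, but it must be established at the level of the monotonised relations rather than pointwise: quantifying the clause $\overline{CDa}\cap\overline{Cb}=\overline{CD}$ over all intermediate $D$ recovers $a\indi{\ACF}_C\quad b$ by Example~\ref{ex_ACFmon}, after which \ref{BMON} of $\indi{\ACF}\quad$ and the $\pi$-clause at each base $CD$ yield $a\indi w _{CD} b$ via the equivalence of Subsection~\ref{subsec_dual} (Lemma~\ref{lm_ind*} with $\gamma=\emptyset$, which only requires $\overline{CDa}\cap\overline{Cb}=\overline{CD}$). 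This is exactly how the paper argues the forward inclusion. With that single repair, your proof goes through.
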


\begin{proof}
Assume that $a\indi{\text{\thorn}}_C b$. In particular $a{\indi{aeq}\quad }^{m}_C b$ so for all $C\subseteq D \subseteq \overline{Cb}$ we have $\overline{Da}\cap \overline{Cb} = \overline{D}$ hence by Example~\ref{ex_ACFmon} we have 
$$a\indi{\ACF}_C\quad b.$$
On the other hand, we have $\pi(\overline{Ca})\cap \pi(\overline{Cb}) = \pi(\overline{C})$, hence by Section~\ref{subsec_im}  
$$a\indi{w}_C\quad b.$$
It follows that $\indi{\text{\thorn}}\upharpoonright K \rightarrow {\indi w}^{m}$. By Fact~\ref{fact_ind}, $\indi d\rightarrow \indi{aeq}\ \upharpoonright K$, hence as $\indi f$ satisfies \ref{BMON} and \ref{EXT2} it follows that $\indi f \rightarrow \indi{\text{\thorn}}\upharpoonright K$. Hence by Corollary~\ref{cor_fork} we conclude that $\indi{\text{\thorn}}\upharpoonright K = {\indi w}^{m} = \indi f = \indi d$. As $\ACFG$ is not simple, $\indi f$ does not satisfy \ref{LOC}, so $\indi{\text{\thorn}}\upharpoonright K$ does not satisfy \ref{LOC} hence neither does $\indi{\text{\thorn}}$. By Fact~\ref{fact_rosy}, $\ACFG$ is not rosy.
\end{proof}

\begin{rk}
There is another way of proving that $\ACFG$ is not rosy which does not use the description of forking in $\ACFG$ but only the fact that $\indi{\text{\thorn}}\upharpoonright K \rightarrow {\indi w}^{m}$. Indeed ${\indi w}^{m}$ does not satisfy \ref{LOC} from Example~\ref{ex_monloc} hence neither does $\indi{\text{\thorn}}\upharpoonright K$ and hence neither does $\indi{\text{\thorn}}$.
\end{rk}

\begin{rk}
It is worth mentioning that in the definition of $\indi{\text{\thorn}}$, the relation $\indi{aeq}\quad$ cannot be replaced by $\indi{a}$. Indeed, in the structure $(K,G)$, by Example~\ref{ex_ACFmon} ${\indi a }^{m} = \indi{\ACF}\quad$ and then as \ref{EXT2} clearly holds for $\indi{\ACF}\quad$, we have ${\indi{a} }^{m*}=\indi{\ACF}\quad$. This relation satisfies \ref{LOC}. This means that ${\indi{a} }^{m*}$ is not the restriction of ${{\indi{aeq}\quad }^{m}}^*$ to the home sort. This is what Adler mention in \cite[Example 4.5]{A09}.
\end{rk}

\subsection{Forking and thorn-forking in other generic constructions}\label{sec_forkdiscuss}

~

\noindent \textbf{Forking and dividing}. In the three following examples:\textit{
\begin{enumerate}
\item Generic $\LL$-structure $T^{\emptyset}_\LL$~\cite[Proposition 3.18]{KR17};
\item Generic $K_{n,m}$-free bipartite graph~\cite[Corollary 4.12]{CKr17};
\item omega-free PAC fields~\cite[Theorem 3.3]{C02};
\end{enumerate}}
\noindent we also have that forking and dividing coincides for types, and coincides with the monotonised of Kim-independence.
In \textit{(1)} and \textit{(2)} the strategy is the following: first prove that $\indi d = {\indi K}^{m}$ and then show that $\indi d$ satisfies \ref{EXT2}. The latter is obtained using \ref{EXT} of the \emph{strong} independence relation and a similar mixed transitivity result. This is discussed in~\cite[Subsection 3.3]{KrR18}. We followed a close strategy: using Lemma~\ref{lm_fork} (based on the approach of \textit{(3)}), have that ${\indi w }^{m*}$ strengthens $\indi d$. Then we use a mixed transitivity result and \ref{EXT} of the strong independence to show that ${\indi K}^{m}$ satisfies \ref{EXT2}. These results suggest that Proposition~\ref{prop_fork} can be used to show that in other examples of $\NSOP 1$ theories, forking and dividing agrees on types, for instance in Steiner triple system~\cite{BC18}, or bilinear form over an infinite dimensional vector space over an algebraically
closed field~\cite{G99}~\cite{CR16}.\\

\noindent\textbf{Strong independence and Mixed Transitivity}. There is also a notion of \emph{strong independence} in the three previous examples 
which is symmetric and stationary over algebraically closed sets.  Concerning \textit{(3)} the strong independence satisfies also the other axioms for mock stability~\cite[Example 0.1 (3)]{KK11}. In \textit{(2)}, it also satisfies \ref{EXT}, \ref{MON} and \ref{TRA}~\cite[Proposition 4.20]{CKr17}. In \textit{(1)}, it is defined in~\cite[Remark 3.19]{KrR18}, as a remark, to state a mixed transitivity result, but nothing about it is proven. It is likely that \textit{(1)} and \textit{(2)},  are also mock stable, witnessed by the strong independence. Informally, the strong independence is in general defined to hold between two sets when they are the most unrelated to each other with respect to the ambient theory. Another way of seeing this relation is by saying that the two sets can be somehow ``\emph{freely amalgamated}''. The definition given in~\cite[Remark 3.19]{KR17} make this precise, for $C\subseteq A\cap B$, we have $A\indi{\otimes}_C B$ if and only if the substructure spanned by $ABC$ is isomorphic to the fibered coproduct of the structures spanned by $A$ and $B$ over the substructure spanned by $C$. This definition coincides with our definition of strong independence in $\ACFG$.
\begin{question}\label{Qst}
  Is there a model-theoretic definition of the strong independence that encompasses the strong independence in the three examples above and in $\ACFG$? 
\end{question}

The mixed transitivity result (Lemma~\ref{mixed_tran}) is starting to be reccurent in $\NSOP 1$ examples. It holds in example \textit{(1)} (\cite[Remark 3.19]{KrR18}) and in \textit{(2)} (\cite[Lemma 4.23]{CKr17}). Note that a similar mixed transitivity appears in a $SOP_3$ (hence $SOP_1$) example: the generic $K_n$-free graph (\cite{C17}), this was observed in \cite[Remark 3.19]{KrR18}.

The mixed transitivity result holds as well in omega-free PAC fields. Let $\indi w$ be the \emph{weak} independence and $\indi{st}$ the \emph{strong} independence in the sens of~\cite[(1.2)]{C02}. Then for all $A,B,C,D$ $\acl$-closed in an omega free PAC field, with $C\subseteq A\cap B$ and $B\subseteq D$ we have:
\begin{center}If $A{\indi w}^{m} _C B $ and $A\indi{st}_B D$ then $A{\indi w}^{m} _C D$. \end{center}
This is contained in the proof\footnote{In the proof of~\cite[(3.1) Proposition]{C02}, $D$ contains $B$, $\psi$ is over $C$ and $F\cap (C\psi(D))^s = C\psi(D)$, hence $\psi(D)$ and $C$ satisfies condition \textit{(I3)} over $B$, so $A_1 = \psi(A_0)$ and $C$ satisfies condition \textit{(I3)} over $E$. As $A_1$ and $C$ satisfies condition \textit{(I1)} over $E$, $A_1$ and $C$ are strongly independent over $E$. Also $A_1$ and $B$ satisfy condition \textit{(I1)} and \textit{(I2)} over $E$. The rest of the proof consist in proving that $A_1$ and $C$ satisfy condition \textit{(I2)} over $E$.} of~\cite[(3.1) Proposition]{C02}. \\

\noindent \textbf{Thorn-forking}. The three other examples are also not rosy. For \textit{(1)}, it is~\cite[Subsection 3.3]{KR17}, for \textit{(2)}, it is \cite[Proposition 4.28]{CKr17} and for \textit{(3)}, it is~\cite[Subsection 3.5]{C08}. Also, for both \textit{(1)} and \textit{(2)} we have $\indi f = \indi d = \indi{\text{\thorn}}$, and they both weakly eliminate imaginaries. 

The following questions have been asked for the last two or three years by specialists in regards to the observations above.
\begin{question}
  \begin{enumerate}
    \item[$(Q_1)$] Does forking equals dividing for types in every $\NSOP 1$ theory?
    \item[$(Q_2)$] Does the mixed transitivity result holds in every $\NSOP 1$ theory? (Provided an answer to Question~\ref{Qst}.)
    \item[$(Q_3)$] Is there an $\NSOP 1$ not simple rosy theory?
  \end{enumerate}
\end{question}

\begin{rk}\label{rk_sM}
In omega-free PAC fields~\cite{C02}, the strong independence $\indi{st}$ and the weak independence $\indi{w}$ are linked by the following relation for $A,B,C$ $\acl$-closed, $A\cap B = C$:
$$A\indi{st}_C B \iff \text{for all $C\subseteq D\subseteq A$ and $C\subseteq D'\subseteq B$ } A\indi{w}_{DD'} B.$$
In $\ACFG$ this is not the case. Let $(K,G)$ be a model of $\ACFG$ and for conveniance assume that $G(\overline{\F_p}) = \set{0}$. Let $t$ and $t'$ be algebraically independent over $\F_p$, let $u = t\cdot t'$. Assume that $G(\overline{\F_p(t,t')}) = \vect{u}$. Then by~\cite[Lemma 5.19.]{dE18A}, $u\notin \overline{\F_p(t)}+\overline{\F_p(t')}$, so $G(\overline{\F_p(t)})+G(\overline{\F_p(t')})=\set{0}$ so $t\nindi{st} \ t'$. We show that for all $D\subseteq \overline{\F_p(t)}$ and $D'\subseteq \overline{\F_p(t')}$ we have $t\indi{w}_{DD'} t'$. Let $D$ and $D'$ be  as such. There are three cases to consider (the middle case is symmetric):
\begin{align*}
&t\cdot t' \in \overline{D't} \text{ and } t\cdot t' \in \overline{Dt'} \quad &G(\overline{D't}) = \vect{u}\quad &G(\overline{Dt'}) = \vect{u} \quad &G(\overline{D't}+\overline{Dt'}) = \vect{u}\\
&t\cdot t' \in \overline{D't} \text{ and } t\cdot t' \notin \overline{Dt'} \quad &G(\overline{D't}) = \vect{u}\quad &G(\overline{Dt'}) = \set{0} \quad &G(\overline{D't}+\overline{Dt'}) = \vect{u}\\
&t\cdot t' \notin \overline{D't} \text{ and } t\cdot t' \notin \overline{Dt'} \quad &G(\overline{D't}) = \set{0}\quad &G(\overline{Dt'}) = \set{0} \quad &G(\overline{D't}+\overline{Dt'}) = \set{0}
\end{align*}
In every cases we have $G(\overline{D't}+\overline{Dt'}) = G(\overline{D't}) + G(\overline{Dt'})$. As $t\indi{\ACF}_{DD'} t'$ is clear we have $t\indi{w}_{DD'} t'$.
\end{rk}

\noindent \textbf{Summary on independence relations in $\ACFG$}.
Every arrow in Figure~\ref{fig_diag2} is strict, from that point of view, $\ACFG$ is different from \textit{(1)}, \textit{(2)} and \textit{(3)}.
\begin{figure}
  \begin{center}
    \includegraphics{fig_diag.pdf}
  \end{center}
\caption{Interactions of independence relations in $\ACFG$.}\label{fig_diag2}
\end{figure}

Denote by $A\indi{w}^{sm}_C\quad B$ the relation for all $C\subseteq D\subseteq \overline{AC}$ and $C\subseteq D'\subseteq \overline{BC}$  $A\indi{w}_{DD'} B$. Remark~\ref{rk_sM} states that $\indi{st}$ is strictly stronger than $\indi{K}^{sm}$, in \textit{(3)}, this is not the case. In \textit{(1)}, we have that $\indi a =\indi{aeq} \ = \indi K$ is strictly weaker than ${\indi a}^{m} = \indi d = \indi f = \indi{\text{\thorn}}$. In \textit{(2)}, $\indi a =\indi{aeq}\ $ is strictly weaker than $\indi{K}$ and ${\indi K}^{m} = \indi d = \indi f = \indi{\text{\thorn}}$. \\

\textbf{Acknowledgements}. This work is part of the author's Ph.D. dissertation. The author would like to thank Thomas Blossier and Zoé Chatzidakis for their precious advising. The author is very grateful to Alex Kruckman and Nicholas Ramsey for numerous exchanges on  $\NSOP 1$ theory and examples. The author would like to give a special thanks to Gabriel Conant for many useful discussions. Part of the result in Subsection~\ref{sub_forcing} was also observed by Gabriel Conant in an unpublished note which inspired the form of this subsection.

\bibliographystyle{plain}
\bibliography{biblio}

\begin{thebibliography}{10}

\bibitem{Ad08}
Hans Adler.
\newblock Around the strong order property and mock simplicity.
\newblock {\em unpublished note}, 2008.

\bibitem{A09}
Hans Adler.
\newblock A geometric introduction to forking and thorn-forking.
\newblock {\em J. Math. Log.}, 9(1):1--20, 2009.

\bibitem{Ad09}
Hans Adler.
\newblock Thorn-forking as local forking.
\newblock {\em J. Math. Log.}, 9(1):21--38, 2009.

\bibitem{BC18}
Silvia Barbina and Enrique Casanovas.
\newblock Model theory of {S}teiner triple systems.
\newblock {\em arXiv:1805.06767 [math.LO]}, 2018.

\bibitem{BMP17}
Thomas Blossier and Amador Martin-Pizarro.
\newblock Un crit\`{e}re simple.
\newblock {\em arXiv:1703.03322 [math.LO]}, 2017.

\bibitem{B98}
Elisabeth Bouscaren, Françoise Delon, Marc Hindry, Ehud Hrushovski, Daniel
  Lascar, David Marker, Anand Pillay, Carol Wood, and Martin Ziegler.
\newblock {\em Model Theory and Algebraic Geometry: An introduction to E.
  Hrushovski’s proof of the geometric Mordell-Lang conjecture}.
\newblock Lecture Notes in Mathematics 1696. Springer-Verlag Berlin Heidelberg,
  1 edition, 1998.

\bibitem{C11}
Enrique Casanovas.
\newblock {\em Simple theories and hyperimaginaries}, volume~39 of {\em Lecture
  Notes in Logic}.
\newblock Association for Symbolic Logic, Chicago, IL; Cambridge University
  Press, Cambridge, 2011.

\bibitem{C97}
Zo\'{e} Chatzidakis.
\newblock Model theory of finite fields and pseudo-finite fields.
\newblock {\em Ann. Pure Appl. Logic}, 88(2-3):95--108, 1997.
\newblock Joint AILA-KGS Model Theory Meeting (Florence, 1995).

\bibitem{C02}
Zo\'e Chatzidakis.
\newblock Properties of forking in omega-free pseudo-algebraically closed
  fields.
\newblock {\em J. Symbolic Logic}, 67(3):957--996, 2002.

\bibitem{C08}
Zo\'{e} Chatzidakis.
\newblock Independence in (unbounded) pac fields, and imaginaries.
\newblock {\em unpublished note}, 2008.

\bibitem{CP98}
Zo\'e Chatzidakis and Anand Pillay.
\newblock Generic structures and simple theories.
\newblock {\em Ann. Pure Appl. Logic}, 95(1):71--92, 1998.

\bibitem{CK12}
Artem Chernikov and Itay Kaplan.
\newblock Forking and dividing in {NTP2} theories.
\newblock {\em Journal of Symbolic Logic 77 (1), 1-20}, 2012.

\bibitem{CR16}
Artem Chernikov and Nicholas Ramsey.
\newblock On model-theoretic tree properties.
\newblock {\em J. Math. Log.}, 16(2):1650009, 41, 2016.

\bibitem{C17}
Gabriel Conant.
\newblock Forking and dividing in henson graphs.
\newblock {\em Notre Dame J. Formal Logic 58 (2017), no. 4}, 2017.

\bibitem{CKr17}
Gabriel Conant and Alex Kruckman.
\newblock Independence in generic incidence structures.
\newblock {\em 1709.09626v1 [math.LO]}, 2017.

\bibitem{deC11}
Yves de~Cornulier.
\newblock On the chabauty space of locally compact abelian groups.
\newblock {\em Algebr. Geom. Topol.}, 11(4):2007--2035, 2011.

\bibitem{deC10}
Yves de~Cornulier, Luc Guyot, and Wolfgang Pitsch.
\newblock The space of subgroups of an abelian group.
\newblock {\em Journal of the London Mathematical Society}, 81(3):727--746,
  2010.

\bibitem{dE18A}
Christian d'Elbée.
\newblock Generic expansion by a reduct.
\newblock {\em arXiv:1810.11722 [math.LO]}, 2018.

\bibitem{EvaHru93}
David Evans and Ehud Hrushovski.
\newblock On the automorphism groups of finite covers.
\newblock {\em Ann. Pure Appl. Logic}, 62(2):83--112, 1993.
\newblock Stability in model theory, III (Trento, 1991).

\bibitem{F01}
Jean Fresnel.
\newblock {\em Anneaux}.
\newblock Hermann, 2001.

\bibitem{G99}
Nicholas Granger.
\newblock {\em Stability, simplicity, and the model theory of bilinear forms}.
\newblock PhD thesis, University of Manchester, 1999.

\bibitem{Hum98}
James~E. Humphreys.
\newblock {\em Linear algebraic groups}.
\newblock Graduate texts in mathematics 021. Springer, 4 edition, 1998.

\bibitem{KR17}
Itay Kaplan and Nicholas Ramsey.
\newblock On {K}im-independence.
\newblock {\em arXiv:1702.03894 [math.LO]}, 2017.

\bibitem{KK11}
Byunghan Kim and Hyeung-Joon Kim.
\newblock Notions around tree property 1.
\newblock {\em Annals of Pure and Applied Logic}, 162(9):698 -- 709, 2011.

\bibitem{KP97}
Byunghan Kim and Anand Pillay.
\newblock Simple theories.
\newblock {\em Annals of Pure and Applied Logic}, 88(2):149 -- 164, 1997.
\newblock Joint AILA-KGS Model Theory Meeting.

\bibitem{KrR18}
Alex Kruckman and Nicholas Ramsey.
\newblock Generic expansion and skolemization in {NSOP1} theories.
\newblock {\em Annals of Pure and Applied Logic}, 169(8):755 -- 774, 2018.

\bibitem{Neu76}
Peter~M. Neumann.
\newblock The structure of finitary permutation groups.
\newblock {\em Arch. Math. (Basel)}, 27(1):3--17, 1976.

\end{thebibliography}

\end{document}